\theoremstyle{plain}
\newtheorem{theorem}{Theorem}[section]
\theoremstyle{remark}
\newtheorem{remark}[theorem]{Remark}
\newtheorem{example}[theorem]{Example}
\theoremstyle{plain}
\newtheorem{corollary}[theorem]{Corollary}
\newtheorem{lemma}[theorem]{Lemma}
\newtheorem{proposition}[theorem]{Proposition}
\newtheorem{definition}[theorem]{Definition}
\numberwithin{equation}{section}
\def\R{{\mathbb R}}
\newcommand{\E}{{\mathbb E}}
\renewcommand{\P}{{\mathbb P}}
\newcommand{\F}{{\mathscr F}}
\newcommand{\g}{\gamma}
\renewcommand{\O}{\Omega}
\newcommand{\Dom}{{\mathsf D}}
\newcommand{\calL}{{\mathscr L}}
\newcommand{\n}{\Vert}
\newcommand{\one}{{{\bf 1}}}
\newcommand{\embed}{\hookrightarrow}
\newcommand{\s}{^*}
\newcommand{\lb}{\langle}
\newcommand{\rb}{\rangle}
\newcommand{\wt}{\widetilde}
\DeclareMathOperator*{\esssup}{ess\,sup}
\begin{document}

\author{Jan van Neerven}
\address{Delft Institute of Applied Mathematics\\
Delft University of Technology \\ P.O. Box 5031\\ 2600 GA Delft\\The
Netherlands} \email{J.M.A.M.vanNeerven@tudelft.nl}

\author{Mark Veraar}
\address{Delft Institute of Applied Mathematics\\
Delft University of Technology \\ P.O. Box 5031\\ 2600 GA Delft\\The
Netherlands} \email{M.C.Veraar@tudelft.nl}

\author{Lutz Weis}
\address{
Department of Mathematics\\ Karlsruhe Institute of Technology (KIT) \\ D-76128
Karls\-ruhe \\ Germany}
\email{Lutz.Weis@kit.edu}

\title[On the $R$-boundedness of stochastic convolution operators]{On the $R$-boundedness of stochastic \\ convolution operators}

\begin{abstract}
The $R$-boundedness of certain families of vector-valued stochastic convolution operators with scalar-valued
square integrable kernels is the
key ingredient in the recent proof of stochastic maximal $L^p$-regularity, $2<p<\infty$, for
certain classes of sectorial operators acting on spaces $X=L^q(\mu)$, $2\le q<\infty$. This
paper presents a systematic study of $R$-boundedness of such families.
Our main result generalises the afore-mentioned $R$-boundedness result to a larger
class of Banach lattices $X$ and relates it to the $\ell^{1}$-boundedness of an associated class of
deterministic convolution operators.
We also establish an intimate relationship between the $\ell^{1}$-boundedness
of these operators and the boundedness of the $X$-valued maximal function.
This analysis leads, quite surprisingly, to an example showing that $R$-boundedness
of stochastic convolution operators
fails in certain UMD Banach lattices with type $2$.
\end{abstract}

\thanks{The first named author is supported by VICI subsidy 639.033.604
of the Netherlands Organisation for Scientific Research (NWO)}

\keywords{Stochastic convolutions, maximal regularity, $R$-boundedness,
Hardy-Littlewood maximal function, UMD Banach function spaces}

\subjclass[2000]{Primary: 60H15; Secondary: 42B25, 46B09, 46E30, 60H05}

\date\today

\maketitle

\section{Introduction}

Maximal $L^p$-regularity is a tool of central importance in the theory of parabolic PDEs,
as it enables one to reduce the study of various classes of `complicated' non-linear PDEs
to a fixed point problem, e.g. by linearisation (see \cite{Are04, DHP, KuWe} and the references therein).
The extension of this circle of ideas to parabolic stochastic PDE required new ideas and was achieved
only recently in \cite{NVW12a}, where it was shown that if a sectorial operator $A$ admits a
bounded $H^\infty$-calculus of angle less than $\pi/2$ on a space $L^q(D,\mu)$, with $q\in [2,\infty)$
and $(D,\mu)$ a $\sigma$-finite measure space, then for all Hilbert spaces $H$ and adapted processes
$G\in L^p(\R_+\times\Omega;L^q(D,\mu;H))$
the stochastic convolution process
$$
U(t) = \int_0^t e^{-(t-s)A}G(s)\,dW_H(s), \quad t\ge 0,
$$ with respect to any cylindrical Brownian motion $W_H$ in  $H$,
is well-defined in $L^q(D,\mu)$, takes values in the fractional domain
$\Dom(A^{1/2})$ almost surely,
and satisfies, for $2<p<\infty$, the stochastic maximal $L^p$-regularity
estimate
\begin{equation}\label{eq:Apq}
\E \n A^{1/2} U\n_{L^p(\R_+;L^q(D,\mu))}^p \le C^p \E\n
G\n_{L^p(\R_+;L^q(D,\mu;H))}^p.
\end{equation}
Applciations to semilinear parabolic SPDEs were worked out subsequently in
\cite{NVW-SIAM}.
 By now, two proofs of the stochastic maximal $L^p$-regularity
theorem are available: the original one of \cite{NVW12a} based on $H^\infty$-calculus techniques combined with
the Poisson
formula for holomorphic functions on an open sector in the complex plane, and a second one
based on operator-valued $H^\infty$-calculus techniques \cite{NVW13}. Both proofs, however, critically depend upon
the $R$-boundedness of a suitable class of vector-valued
stochastic convolution operators with scalar-valued
kernels. For stochastic convolution operators taking values in a space $L^q(\mu)$ with
$2\le q<\infty$, the $R$-boundedness of this family has been derived in \cite{NVW12a}
as a consequence of the Fefferman-Stein theorem on the $L^p(L^q(\mu))$-boundedness of the
Hardy-Littlewood maximal function; it is for this reason that the theory, in its present state,
is essentially limited to SPDEs with state space $X=L^q(\mu)$.

The aim of this paper is to undertake a systematic analysis
of the $R$-boundedness properties of families of
stochastic convolution operators with scalar-valued square integrable
kernels $k$ taking values in an arbitrary
Banach lattice $X$. The main result asserts that
such a family is $R$-bounded if and only if the corresponding family of {\em deterministic}
convolution operators corresponding to the squared kernels $k^2$ is $\ell^{1}$-bounded.
The notion of $\ell^{s}$-boundedness (also called $R^s$-boundedness), $1\le s\le \infty$,
has been introduced in  \cite{Weis-maxreg} and was systematically studied
in \cite{KunUll, UllmannPhD}. For operators acting on
Banach lattices $X$ with finite cotype, $R$-boundedness is equivalent to $\ell^{2}$-boundedness. Moreover, in \cite{KVW} it is shown that this can only be true if $X$ has finite cotype.

Thus the problem of stochastic maximal $L^p$-regularity is reduced to the problem of
$\ell^{1}$-boundedness of suitable families of deterministic convolution operators with integrable
kernels. Our second main result establishes the $\ell^{1}$-boundedness of such operators
under the assumption that $X$ is a Banach lattice with type $2$ with the additional
property that the dual of its $2$-convexification has the so-called Hardy-Littlewood property, meaning
essentially that the Fefferman-Stein theorem holds for this space. A sufficient condition for the
latter is that the $2$-convexification is a UMD Banach function space.
In \cite[Theorem 3]{PoSuXu}, the same condition was shown to imply the the
$X$-valued Littlewood-Paley-Rubio de Francia property.

In Section \ref{sec:failure} we show that the Banach lattice $\ell^\infty(\ell^2) = (\ell^1(\ell^2))\s$ fails the
Hardy-Littlewood property (see Definition \ref{def:HL} below), and for this reason $X= \ell^2(\ell^4)$
(whose $2$-concavif\-ication equals $\ell^1(\ell^2)$) is a natural candidate
of a Banach lattice in which $R$-boundedness of $X$-valued stochastic convolution operators
might fail. In the final section of this
paper we establish our third main result, which turns this suspicion into a theorem.
The failure of $R$-boundedness of stochastic convolutions
in $\ell^2(\ell^4)$ is quite remarkable, as this space is a UMD Banach lattice with type $2$.

\section{Preliminaries}

Throughout this paper, all vector spaces are real.
In this preliminary section we collect some results that will be needed in the sequel.

\subsection{$R$-boundedness} (See \cite{DHP, KuWe}).
Let $X$ and $Y$ be real Banach
spaces and let $(r_n)_{n\ge 1}$ be a {\em Rademacher sequence} on a probability space
$(\Omega,\P)$, that is, a sequence of independent random variables $r_n:\Omega\to \{-1,1\}$
taking the values $\pm 1$ with probability $\frac12$.
A family $\mathscr{T}$ of bounded linear operators from $X$ to $Y$ is
called {\em $R$-bounded} if there exists a constant $C\ge 0$ such that for all
finite sequences $(T_n)_{n=1}^N$ in ${\mathscr
{T}}$ and $(x_n)_{n=1}^N$ in $X$ we have
\[ \E \Big\n \sum_{n=1}^N r_n T_n x_n\Big\n ^2
\le C^2\E \Big\n \sum_{n=1}^N r_n x_n\Big\n ^2.
\]
The least admissible constant $C$ is called the {\em $R$-bound} of $\mathscr
{T}$, notation $R(\mathscr{T})$.

\subsection{Spaces of radonifying operators}\label{subsec:rad}
(See \cite{NeeCMA}).
Let $H$ be a Hilbert space and $X$ a Banach space.
 For $h\in H$ and $x\in X$ we denote by $h\otimes x$ the rank one operator from $H$ to $X$ given by $h'\mapsto [h',h]x$.
Let $(\g_n)_{n\ge 1}$ be a Gaussian sequence defined on some probability space $(\O,\P)$.
The {\em $\g$-radonifying norm} of a finite rank operator
of the form $\sum_{n=1}^N h_n\otimes x_n$, where the vectors
 $h_1,\dots,h_N$ are orthonormal in $H$ and $x_1,\dots,x_N$ are taken from $X$, is defined by
\begin{align}\label{eq:gamma} \Big\n \sum_{n=1}^N h_n\otimes x_n \Big\n_{\g(H,X)}^2 :=
\E \Big\n  \sum_{n=1}^N \g_n x_n \Big\n^2.\end{align}
The invariance of standard Gaussians vectors in $\R^n$ under orthogonal transformations
easily implies that this is well defined.
The completion of the space $H\otimes X$ of all finite rank operators from $H$ into $X$
with respect to
the norm $\n \cdot\n_{\gamma(H,X)}$ is denoted by $\g(H,X)$. This space is
continuously and contractively embedded in
$\calL(H,X)$. A bounded operator in $\calL(H,X)$ is said to be {\em $\g$-radonifying} if it
belongs to $\g(H,X)$.
If $H$ is separable, say with orthonormal basis $(h_n)_{n\ge 1}$, then an operator $T\in \calL(H,X)$
is $\gamma$-radonifying if and only if the sum
$\sum_{n\ge 1} \gamma_n Th_n $ converges in $L^2(\Omega;X)$, and in this case we have
$$\n T\n_{\gamma(H,X)}^2 = \E\Big\n  \sum_{n\ge 1} \gamma_n Th_n\Big\n^2.$$

The space
$\g(H,X)$ is an {\em operator ideal} in $\calL(H,X)$ in the sense that if
$S_1:\tilde{H}\to H$ and $ S_2:X\to \tilde{X}$
are bounded operators, then $T\in \g(H,X)$ implies
$S_2TS_1\in \g(\tilde{H},\tilde{X})$ and
\begin{equation}\label{eq:ideal}
 \n S_2TS_1\n_{ \g(\tilde{H},\tilde{X})} \leq \n S_2\n \n T\n_{\g(H,X)} \n S_1\n.
\end{equation}

Let $p\in [1,\infty)$ be given, let $(\Omega,\P)$ be a probability space,
and suppose that $W : L^2(\R_+;H) \to L^2(\Omega)$ is an $H$-cylindrical Brownian motion
(see Section \ref{sec:stoch} for the precise definition). Then the
stochastic integral $h\otimes x \mapsto Wh\otimes x$ extends to an isomorphic embedding
of $\g(L^2(\R_+;H),X)$ onto a closed subspace of $L^p(\O;X)$. This fact will be used in the proof of
Proposition \ref{prop:alphaplus}; a more detailed account of stochastic integration with respect to cylindrical Brownian motion will be given in Section \ref{sec:stoch}.

\begin{example}
If $X$ is a Hilbert space, then $\g(H,X)$ is isometrically
isomorphic to the Hilbert space of
Hilbert--Schmidt operators from $H$ to $X$. If $(S,\mu)$ is a $\sigma$-finite measure
space and $p\in [1,\infty)$, then $\gamma(H,L^p(S)) = L^p(S;H)$ with equivalent norms,
the isomorphism being given by associating to the function $f\in L^p(S;H)$ the mapping
$h\mapsto [f(\cdot),h]_H$ from $H$ to $L^p(S)$.
\end{example}

More generally we have (see \cite[Proposition 2.6]{NVW1}) :
\begin{proposition}[$\gamma$-Fubini isomorphism]\label{prop:g-Fub}
For any Banach space $X$ the mapping $h\otimes (f\otimes x) \mapsto f\otimes (h\otimes x)$
extends by linearity to an isomorphism of Banach spaces
$$ \gamma(H,L^p(\R^d;X)) \eqsim L^p(\R^d; \gamma(H,X)).$$
\end{proposition}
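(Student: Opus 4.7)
The plan is to establish the asserted isomorphism on a common dense subspace of both Banach spaces and then extend by continuity. The natural candidate is the algebraic tensor product $\mathcal{S} := H \otimes L^p(\R^d) \otimes X$, regarded on one side as a space of finite rank operators $H \to L^p(\R^d;X)$ and on the other as a space of simple functions $\R^d \to H \otimes X \subseteq \gamma(H,X)$. Density of $\mathcal S$ in $\gamma(H,L^p(\R^d;X))$ is built into the definition of the $\gamma$-norm; density in $L^p(\R^d;\gamma(H,X))$ follows from the density of simple functions in Bochner spaces combined with the density of finite rank operators $H\otimes X$ in $\gamma(H,X)$.

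Next I would compute both norms on a typical representative. Take $T = \sum_{n=1}^N h_n \otimes f_n \in \mathcal S$ with $(h_n)_{n=1}^N$ orthonormal in $H$ and $f_n \in L^p(\R^d)\otimes X$. By \eqref{eq:gamma},
\[
\n T\n_{\gamma(H,L^p(\R^d;X))}^2 = \E \Big\n \sum_n \g_n f_n \Big\n_{L^p(\R^d;X)}^2,
\]
while the image under the flip $h\otimes(f\otimes x)\mapsto f\otimes(h\otimes x)$ is the pointwise finite rank operator $s \mapsto \sum_n h_n \otimes f_n(s)$, whose image norm satisfies
\[
\n T\n_{L^p(\R^d;\gamma(H,X))}^p = \int_{\R^d} \Big(\E \Big\n \sum_n \g_n f_n(s)\Big\n_X^2\Big)^{p/2}\,ds.
\]
The Kahane--Khintchine inequality asserts that the $L^2(\Omega)$- and $L^p(\Omega)$-norms of a Gaussian sum in a Banach space are equivalent, with constants $\kappa_p$ depending only on $p$. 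Applying this on both sides and invoking scalar Fubini on $\Omega\times\R^d$, both expressions become equivalent to $\bigl(\E \int_{\R^d}\n\sum_n \g_n f_n(s)\n_X^p\,ds\bigr)^{1/p}$, so the flip map preserves norms up to a multiplicative constant depending only on $p$.

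Having established two-sided norm bounds on the dense subspace $\mathcal S$, the flip map extends uniquely to a linear homeomorphism of the completions, which are precisely $\gamma(H,L^p(\R^d;X))$ and $L^p(\R^d;\gamma(H,X))$. The only real point requiring care is verifying density in the Bochner space $L^p(\R^d;\gamma(H,X))$; the Kahane--Khintchine/Fubini comparison itself is routine once the constants are tracked. No deeper obstacle is expected, since the structural ingredients (definition of $\g$-radonifying norm, ideal property, Kahane--Khintchine) are already in place.
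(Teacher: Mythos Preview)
Your argument is correct: density of $H\otimes L^p(\R^d)\otimes X$ in both completions, followed by the Kahane--Khintchine/Fubini comparison of $\bigl(\E\n\sum_n\g_n f_n\n_{L^p(\R^d;X)}^2\bigr)^{1/2}$ and $\bigl(\int_{\R^d}(\E\n\sum_n\g_n f_n(s)\n_X^2)^{p/2}\,ds\bigr)^{1/p}$, is exactly the standard proof. The paper itself does not give a proof but simply cites \cite[Proposition~2.6]{NVW1}, where this same Kahane--Khintchine/Fubini argument is carried out; so your proposal matches the approach in the cited reference.
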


The next simple proposition extends \cite[Proposition 6.2]{MaaNee}. For $f\in L^p(S)$ and
$y\in Y$ we denote by $f\otimes y$ the function in $L^p(S;Y)$ defined by $(f\otimes y)(s):= f(s)y$.
By $I_H$ we denote the identity operator on $H$.

\begin{proposition}\label{prop:Rbdd-ext}
Let $\mathscr{T}$ be an $R$-bounded family of bounded linear operators from $X$ to $Y$
and let $H$ be a nonzero Hilbert space. Then the family
$I_H\otimes \mathscr{T} = \{I_H\otimes T: \ T \in \mathscr{T}\}$ is $R$-bounded
from $\gamma(H,X)$ to $\gamma(H,Y)$ and
$R(I_H\otimes \mathscr{T})= R(\mathscr{T})$.
\end{proposition}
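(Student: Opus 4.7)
The plan is to reduce to finite-rank operators written in a common orthonormal basis of $H$, then swap a Rademacher average with a Gaussian average using Fubini and the independence of the two sequences, applying the $R$-boundedness of $\mathscr{T}$ in between.

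First I would note that for each $T \in \mathscr{T}$ the operator $I_H \otimes T$ is bounded from $\gamma(H,X)$ to $\gamma(H,Y)$ by the ideal property \eqref{eq:ideal}, applied with $S_1 = I_H$ and $S_2 = T$. Now fix $T_1,\dots,T_N \in \mathscr{T}$ and $S_1,\dots,S_N \in \gamma(H,X)$. By density of finite rank operators it suffices to consider $S_n \in H \otimes X$, and by Gram--Schmidt I may write all of them with respect to a single finite orthonormal system $h_1,\dots,h_K$ in $H$:
\[
S_n = \sum_{k=1}^K h_k \otimes x_{n,k}, \qquad x_{n,k} \in X.
\]
Let $(r_n)_{n\ge 1}$ and $(\gamma_k)_{k\ge 1}$ denote independent Rademacher and Gaussian sequences on (possibly different) probability spaces, with expectations $\E_r$ and $\E_\gamma$. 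Conditionally on $(r_n)$, the operator $\sum_n r_n (I_H \otimes T_n) S_n = \sum_k h_k \otimes \big(\sum_n r_n T_n x_{n,k}\big)$ is of rank at most $K$ with orthonormal left factors $h_k$, so its squared $\gamma(H,Y)$-norm equals $\E_\gamma \|\sum_k \gamma_k \sum_n r_n T_n x_{n,k}\|_Y^2$ by \eqref{eq:gamma}.

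Rearranging the double sum and applying Fubini,
\[
\E_r \Big\| \sum_{n=1}^N r_n (I_H\otimes T_n) S_n \Big\|_{\gamma(H,Y)}^2
= \E_\gamma \E_r \Big\| \sum_{n=1}^N r_n T_n \Big(\sum_{k=1}^K \gamma_k x_{n,k}\Big) \Big\|_Y^2.
\]
For each fixed realization of $(\gamma_k)$, the inner $\E_r$-expectation is bounded using the $R$-boundedness of $\mathscr{T}$ by $R(\mathscr{T})^2\, \E_r\|\sum_n r_n \sum_k \gamma_k x_{n,k}\|_X^2$. Taking $\E_\gamma$ on both sides and reversing the same computation on the right yields the desired bound
\[
\E_r \Big\| \sum_{n=1}^N r_n (I_H\otimes T_n) S_n \Big\|_{\gamma(H,Y)}^2
\le R(\mathscr{T})^2\, \E_r \Big\| \sum_{n=1}^N r_n S_n \Big\|_{\gamma(H,X)}^2,
\]
so $R(I_H \otimes \mathscr{T}) \le R(\mathscr{T})$.

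For the reverse inequality I would test on rank-one operators: fix any unit vector $h \in H$ (which exists as $H \ne 0$) and take $S_n = h \otimes x_n$ with $x_n \in X$; then $(I_H \otimes T_n) S_n = h \otimes T_n x_n$ and $\|h \otimes z\|_{\gamma(H,Z)} = \|z\|_Z$ for any Banach space $Z$, so any admissible constant for $I_H \otimes \mathscr{T}$ is also admissible for $\mathscr{T}$. I do not expect a serious obstacle here; the only subtlety is the bookkeeping in the reduction to a common orthonormal system, which is standard, and the clean separation of the $r$- and $\gamma$-averages which relies on their independence.
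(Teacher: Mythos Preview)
Your proof is correct and follows essentially the same approach as the paper: expand the $\gamma$-norm via an orthonormal system, use Fubini to condition on the Gaussian sequence, apply the $R$-boundedness of $\mathscr{T}$ to the inner Rademacher sum, and undo the computation. The only cosmetic difference is that the paper reduces to separable $H$ and works with a full orthonormal basis, whereas you reduce to finite rank and a common finite orthonormal system; the paper also dismisses the reverse inequality as ``trivial'' while you spell out the rank-one test.
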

\begin{proof}
Fix $T_1,\dots,T_N\in\mathscr{T}$ and $R_1,\dots,R_N\in\gamma(H,X)$.
Since each $R_n$ is the limit of at most countably many finite rank operators we may assume that $H$ is separable.
Let $(h_m)_{m\ge 1}$ be an orthonormal basis for $H$. Then,
\begin{align*}
 \E_r \Big\n \sum_{n=1}^N r_n (I_H\otimes T_n)R_n \Big\n_{\gamma(H,Y)}^2
 &
= \E_r \E_\g \Big\n\sum_{n=1}^N \sum_{m\ge 1} r_n \gamma_m T_nR_n h_m \Big\n ^2
\\ & \le R(\mathscr{T})^2 \E_r \E_\g \Big\n \sum_{n=1}^N\sum_{m\ge 1} r_n \gamma_m R_n h_m \Big\n ^2
\\ & = R(\mathscr{T})^2\E_r \Big\n \sum_{n=1}^N r_n R_n \Big\n_{\gamma(H,X)}^2.
\end{align*}
This proves the $R$-boundedness of $I_H\otimes \mathscr{T}$ along with the bound
$R(I_H\otimes \mathscr{T})\le R(\mathscr{T})$. The converse inequality is trivial.
\end{proof}

\subsection{Type and cotype}(See \cite{DJT, LeTa}).
A Banach space $X$ has {\em type $p\in [1,2]$} if there exists a constant
$C\ge 0$ such that for all $N\ge 1$ and all finite sequences $(x_n)_{n=1}^N$
in $X$ we have
$$
\Big(\E \Big\| \sum_{n=1}^N r_n x_n\Big\|^2\Big)^\frac12\le C
\Big(\sum_{n=1}^N \|x_n\|^p\Big)^\frac1p.
$$
The least admissible constant $C$ is called the {\em type $p$ constant} of $X$, notation
$T_p(X)$.
Similarly, $X$ has {\em cotype $q\in [2,\infty]$} if there exists a constant $C\ge 0$ such that for all
$N\ge 1$ and all finite sequences  $(x_n)_{n=1}^N$
in $X$ we have
$$
\Big(\sum_{n=1}^N \|x_n\|^q\Big)^\frac1q \le C\Big(\E \Big\| \sum_{n=1}^N
r_n\, x_n\Big\|^2\Big)^\frac12
$$
(with an obvious modification if $q=\infty$).
The least admissible constant $C$ is called the {\em cotype $q$ constant} of $X$, notation
$C_q(X)$.

Every Banach space has type $1$ and cotype $\infty$, Hilbert spaces have type $2$
and cotype $2$, and the spaces $L^p(S)$ have type $\min\{p,2\}$ and
cotype $\max\{p,2\}$ for $p\in [1,\infty)$. All UMD spaces have non-trivial type, i.e.,
type $p\in (1,2]$.

Spaces of type $2$ are of special importance to us for the following reason.

\begin{proposition}[\hbox{\cite{vNWe, RS}}]\label{prop:L2gamma}
Let $(S,\Sigma,\mu)$ be a $\sigma$-finite measure space, $H$ a non-zero Hilbert space,
and $X$ a Banach space.
\begin{enumerate}[\rm(1)]
 \item
$X$ has type $2$ if and only if
the mapping
$$f\otimes (h\otimes x) \mapsto (f\otimes h)\otimes x, \qquad f\in L^2(S), \ h\in H, \ x\in X, $$
extends to continuous embedding
$$
I: L^2(S;\g(H,X))\embed \g(L^2(S;H),X).
$$
In this case we have $\n I\n\le T_2(X)$.

\item $X$ has cotype $2$ if and only if
the mapping $$(f\otimes h)\otimes x \mapsto f\otimes (h\otimes x), \qquad f\in L^2(S), \
h\in H, \ x\in X, $$
extends to continuous embedding
$$
J: \g(L^2(S;H),X) \embed L^2(S;\g(H,X)).
$$
In this case we have $\n J\n \le C_2(X)$.
\end{enumerate}
\end{proposition}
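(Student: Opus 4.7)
Parts (1) and (2) are dual to each other via $\gamma(H,X)^{*} \cong \gamma(H, X^{*})$ and the equivalence ``$X$ has cotype $2 \iff X^{*}$ has type $2$'' (for reflexive $X$), so I focus on (1); the argument for (2) is parallel.  For the ``only if'' direction, specialising to $S = \{1,\ldots,N\}$ with counting measure and $H = \R$ identifies $L^{2}(S;\gamma(\R,X))$ with $\ell^{2}_{N}(X)$ and $\gamma(L^{2}(S;\R),X)$ with $\gamma(\ell^{2}_{N},X)$, and the embedding becomes the Gaussian type-$2$ estimate $(\E\|\sum_{n}\gamma_{n}x_{n}\|^{2})^{1/2} \le C(\sum_{n}\|x_{n}\|^{2})^{1/2}$, which is equivalent to Rademacher type $2$ with $T_{2}(X) \le C$.

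For the ``if'' direction, assume $X$ has type 2. The plan is to verify the bound on simple $F \in L^{2}(S) \otimes \gamma(H,X)$ and extend by density.  I would factor the natural map $I$ as
\begin{equation*}
L^{2}(S;\gamma(H,X)) \overset{(a)}{\cong} \gamma(H, L^{2}(S;X)) \overset{(b)}{\hookrightarrow} \gamma(H, \gamma(L^{2}(S), X)) \overset{(c)}{\hookrightarrow} \gamma(L^{2}(S;H), X).
\end{equation*}
Step (a) is the $\gamma$-Fubini isomorphism of Proposition~\ref{prop:g-Fub}, whose statement extends verbatim from $\R^{d}$ to a general $\sigma$-finite measure space $(S,\Sigma,\mu)$ by a standard approximation and is isometric for $p=2$.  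Step (b) applies the classical embedding $L^{2}(S;X) \hookrightarrow \gamma(L^{2}(S),X)$, valid for type-$2$ spaces with constant $\le T_{2}(X)$, and lifts it through $\gamma(H, \cdot)$ by the ideal property~\eqref{eq:ideal}; this is the only step contributing the constant $T_{2}(X)$.  Step (c) is the contractive ``$\gamma$-associativity'' map $T \mapsto \tilde T$ with $\tilde T(h_{1} \otimes h_{2}) := T(h_{1})(h_{2})$, from $\gamma(H_{1}, \gamma(H_{2}, X))$ into $\gamma(H_{1}\otimes_{2} H_{2}, X)$; taking $H_{1} = H$ and $H_{2} = L^{2}(S)$ identifies $H_{1}\otimes_{2} H_{2}$ with $L^{2}(S;H)$.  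Concatenating (a)--(c) delivers $\|I\| \le T_{2}(X)$.

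The main obstacle is step (c).  Using the Gaussian characterisation of the $\gamma$-norm, after writing $T$ in terms of orthonormal bases of $H_{1}$ and $H_{2}$, it reduces to the universal Banach-space inequality
\begin{equation*}
\E \Big\| \sum_{k,j} \gamma_{k,j}\, z_{k,j} \Big\|^{2} \le \E \Big\| \sum_{k,j} \gamma_{k} \gamma'_{j}\, z_{k,j} \Big\|^{2}
\end{equation*}
valid for any finite family $(z_{k,j}) \subset X$ and independent Gaussian sequences $(\gamma_{k,j})$, $(\gamma_{k})$, $(\gamma'_{j})$; this expresses that the second-order Gaussian chaos dominates in second moment the centred Gaussian vector sharing its covariance.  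I would prove it by conditioning on $(\gamma'_{j})$, rewriting the right-hand side as $\E_{\gamma'}\|U(\gamma')\|_{\gamma(\ell^{2},X)}^{2}$ with $U(\gamma') e_{k} := \sum_{j} \gamma'_{j} z_{k,j}$, and comparing the two iterated Gaussian norms via the ideal property.  Once this inequality is established the chain closes with the sharp constant $T_{2}(X)$, completing part~(1); part~(2) then follows by the dual argument outlined above.
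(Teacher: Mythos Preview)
Your factorisation has a genuine gap at step~(c). The map $\gamma(H_1,\gamma(H_2,X)) \to \gamma(H_1\overline\otimes H_2,X)$ is \emph{not} contractive, nor even bounded, for arbitrary Banach spaces: by Proposition~\ref{prop:alpha}(1) of the paper it is bounded precisely when $X$ has property~$(\alpha^+)$. The inequality you claim,
\[
\E\Big\|\sum_{k,j}\gamma_{k,j}\,z_{k,j}\Big\|^2 \ \le\ \E\Big\|\sum_{k,j}\gamma_k\gamma'_j\,z_{k,j}\Big\|^2,
\]
already fails in $X=\ell^1$ (take the $z_{k,j}$ to be distinct unit vectors and compare the leading $N^4$ coefficients: $2/\pi$ on the left versus $(2/\pi)^2$ on the right), so no ideal-property manipulation can produce it universally. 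Under the type~$2$ hypothesis step~(c) \emph{is} bounded, via Proposition~\ref{prop:typealpha}(1), but the constant obtained there is itself of order $T_2(X)$; concatenating (b) and~(c) then yields only $\|I\|\lesssim T_2(X)^2$, not the sharp bound $T_2(X)$ asserted in the proposition.

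The direct argument avoids this detour. For $F=\sum_{k=1}^N f_k\otimes R_k$ with $(f_k)$ orthonormal in $L^2(S)$ and $R_k\in\gamma(H,X)$, choose an orthonormal basis $(h_j)$ of $H$ and set $w_k:=\sum_j\gamma_{k,j}R_kh_j$. The $w_k$ are independent symmetric $X$-valued random vectors, so randomising by an independent Rademacher sequence and applying the type~$2$ inequality conditionally gives
\[
\|I(F)\|_{\gamma(L^2(S;H),X)}^2 = \E\Big\|\sum_k w_k\Big\|^2 = \E_w\E_r\Big\|\sum_k r_k w_k\Big\|^2 \le T_2(X)^2\sum_k\E\|w_k\|^2 = T_2(X)^2\|F\|_{L^2(S;\gamma(H,X))}^2.
\]
Your duality reduction of part~(2) to part~(1) is also not valid at the stated generality: the implication ``$X$ has cotype $2\Rightarrow X^*$ has type $2$'' requires $K$-convexity (for instance $\ell^1$ has cotype~$2$ while $(\ell^1)^*=\ell^\infty$ fails type~$2$), and the identification $\gamma(H,X)^*\cong\gamma(H,X^*)$ likewise needs extra hypotheses. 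Part~(2) should be argued directly, in parallel with the computation above but using the cotype~$2$ inequality in place of type~$2$.
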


\subsection{Double Rademacher sums}(See \cite{NeeWei08}).
Let $(r_{mn})_{m,n\geq 1}$ be a doubly indexed Rade\-macher
sequence on a probability space
$(\Omega,\P)$  and let $(r_n')_{n\geq 1}$ and $(r_m'')_{n\geq 1}$
be Rade\-macher sequences on independent probability spaces
$(\Omega',\P')$ and $(\Omega'',\P'')$ respectively.
\begin{definition}[See \cite{NeeWei08,Pialpha}] Let $X$ be a Banach space.
\begin{enumerate}[\rm (1)]

\item
 $X$ has {\em property $(\alpha^+)$}
 if there is a constant $C^+\ge 0$ such that
for all finite doubly-indexed sequences $(x_{mn})_{m,n=1}^{M,N}$ in $X$ we
have
\[
\begin{aligned}
 \Big(\E \Bigl\| \sum_{m=1}^M\sum_{n=1}^N r_{mn} x_{m n} \Bigr\|^2\Big)^{1/2}
\le C^+ \Big(\E'\E'' \Bigl\| \sum_{m=1}^M\sum_{n=1}^N r_{m}'r_n''x_{m n}
\Bigr\|^2\Big)^{1/2}.
\end{aligned}
\]

\item
 $X$ has {\em property $(\alpha^-)$} if there is a constant $C^-\ge 0$ such that
for all finite doubly-indexed sequences $(x_{mn})_{m,n=1}^{M,N}$ in $X$ we
have
\[
\begin{aligned}
 \Big(\E'\E'' \Bigl\| \sum_{m=1}^M\sum_{n=1}^N r_{m}'r_n''x_{m n}
 \Bigr\|^2\Big)^{1/2}
\le C^- \Big(\E \Bigl\| \sum_{m=1}^M\sum_{n=1}^N r_{mn} x_{m n}
\Bigr\|^2\Big)^{1/2}.
\end{aligned}
\]

\item $X$ has {\em property $(\alpha)$} if $X$ has property $(\alpha^+)$ and
$(\alpha^-)$.
\end{enumerate}
\end{definition}
Each of the properties  $(\alpha^+)$ and
$(\alpha^-)$ implies finite cotype, and conversely
every Banach lattice with finite cotype has property $(\alpha)$.
The space $c_0$ fails both $(\alpha^+)$ and
$(\alpha^-)$. For the Schatten class $C^p$ with
$p\in [1, \infty)$ one has the following results which follows from the proofs in \cite{PiX:96}:
\begin{itemize}
\item[\rm(i)] $C^p$ has property $(\alpha^+)$ if and only if $p\in [2, \infty)$
\item[\rm(ii)] $C^p$ has property $(\alpha^-)$ if and only if $p\in [1, 2]$.
\end{itemize}
In particular, $C^p$ has property $(\alpha)$ if and only if $p=2$.

Below we shall need part (1) of the following result.

\begin{proposition}[\mbox{\cite[Theorem 3.3]{NeeWei08}}]\label{prop:alpha} Let $H_1$ and $H_2$ be non-zero Hilbert spaces and denote by
$H_1\overline\otimes H_2$ their Hilbert space tensor product. The following assertions hold:
\begin{enumerate}[\em(1)]
 \item $X$ has property $(\alpha^+)$ if and only if the map $h_1\otimes (h_2\otimes x) \mapsto
(h_1\otimes h_2)\otimes$ extends to a bounded operator from
$\g(H_1,\g(H_2,X))$ into $ \g(H_1\overline\otimes H_2,X)$;

 \item $X$ has property $(\alpha^-)$ if and only if the map $(h_1\otimes h_2)\otimes \mapsto
h_1\otimes (h_2\otimes x)
$ extends to a bounded operator from $ \g(H_1\overline \otimes H_2,X)$ into
$\g(H_1,\g(H_2,X))$.
\end{enumerate}
\end{proposition}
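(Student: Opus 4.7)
The plan is to reduce both parts to a direct computation of $\g$-norms on finite-rank tensors. By density it suffices to verify the norm inequalities corresponding to boundedness of the two maps on elements of $H_1\otimes H_2\otimes X$. Accordingly, fix finite orthonormal systems $(h_m^1)_{m=1}^M\subset H_1$ and $(h_n^2)_{n=1}^N\subset H_2$, and write a generic finite-rank tensor as $U=\sum_{m=1}^M h_m^1\otimes T_m$ with $T_m=\sum_{n=1}^N h_n^2\otimes x_{mn}$ and $x_{mn}\in X$.

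Applying the definition \eqref{eq:gamma} of the $\g$-norm first to the outer $\g(H_1,\cdot)$-norm and then to the inner $\g(H_2,X)$-norm, and using Fubini with two independent Gaussian sequences $(\g_m')$ and $(\g_n'')$, one obtains
\[
\n U\n_{\g(H_1,\g(H_2,X))}^2 = \E'\E''\Big\n\sum_{m,n}\g_m'\g_n'' x_{mn}\Big\n^2.
\]
Since $(h_m^1\otimes h_n^2)_{m,n}$ is orthonormal in $H_1\overline\otimes H_2$, a second application of \eqref{eq:gamma} gives
\[
\Big\n\sum_{m,n}(h_m^1\otimes h_n^2)\otimes x_{mn}\Big\n_{\g(H_1\overline\otimes H_2,X)}^2 = \E\Big\n\sum_{m,n}\g_{mn} x_{mn}\Big\n^2.
\]
Therefore the boundedness of the map in (1) is equivalent to the existence of a constant $C$ such that
\[
\E\Big\n\sum_{m,n}\g_{mn}x_{mn}\Big\n^2\le C^2\,\E'\E''\Big\n\sum_{m,n}\g_m'\g_n'' x_{mn}\Big\n^2
\]
for all finite families $(x_{mn})\subset X$, and the boundedness of the map in (2) is equivalent to the reverse inequality. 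These are precisely the \emph{Gaussian} analogues of properties $(\a^+)$ and $(\a^-)$.

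The chief obstacle, and the only nontrivial step, is the quantitative comparison of these Gaussian inequalities with the Rademacher definitions of $(\a^+)$ and $(\a^-)$. My plan here is to invoke the standard trick of writing each centred Gaussian as $\g = r|\g|$ with $r$ an independent Rademacher sign, then separating signs from magnitudes via Fubini and applying the Rademacher inequality pathwise to the magnitudes; the resulting two-sided equivalence with comparable constants is established in \cite{NeeWei08,Pialpha} and would be used as a black box. Once this Rademacher--Gaussian equivalence is in hand, both parts of the proposition follow immediately from the two norm identities above; care is needed in the comparison because, absent a finite-cotype assumption on $X$, one cannot freely interchange Rademacher and Gaussian sums without exploiting the product structure $r_m'r_n''$ versus $\g_m'\g_n''$.
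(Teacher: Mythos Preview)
The paper does not supply a proof of this proposition; it is quoted from \cite[Theorem~3.3]{NeeWei08} and used as a black box. Your sketch is the standard argument and is correct in outline: the two $\gamma$-norm identities you write down reduce the boundedness statements precisely to the Gaussian analogues of $(\alpha^{\pm})$, and the remaining step is indeed the equivalence of the Gaussian and Rademacher formulations. That equivalence can be obtained, for instance, by first noting (as the paper records just after the definition) that either Rademacher property $(\alpha^{\pm})$ forces finite cotype, whereupon single and iterated Gaussian and Rademacher sums are uniformly comparable; for the converse direction the sign decomposition $\gamma=r|\gamma|$ together with the contraction principle handles the passage from Gaussian back to Rademacher. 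Since you ultimately cite \cite{NeeWei08} for this last step, and the paper itself simply cites the entire proposition from the same source, your proposal and the paper's treatment amount to the same thing.
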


The next result establishes a relation between the notions of type and cotype and the
properties $(\alpha^+)$ and $(\alpha^-)$.

\begin{proposition}\label{prop:typealpha}
Let $X$ be a Banach space.
\begin{enumerate}[(1)]
\item[\rm(1)] If $X$ has type $2$, then $X$ has property $(\alpha^+)$.
\item[\rm(2)] If $X$ has cotype $2$, then $X$ has property $(\alpha^-)$.
\end{enumerate}
\end{proposition}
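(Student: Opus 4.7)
My strategy is to translate the two properties via Proposition~\ref{prop:alpha} into boundedness of natural maps between iterated $\g$-spaces, and then to realize these via the type/cotype embeddings of Proposition~\ref{prop:L2gamma}. The common device is to realize $H_1$ as the closed linear span of an i.i.d.\ standard Gaussian sequence $(f_m)_{m\ge 1}$ inside $L^2(\Omega_1,\P_1)$. This choice yields an isometric Gaussian embedding
\[
\iota_Y\colon \g(H_1,Y)\embed L^2(\Omega_1;Y),\qquad T\mapsto \sum_{m\ge 1}f_m\cdot Tf_m,
\]
for every Banach space $Y$, and simultaneously identifies $H_1\overline\otimes H_2$ isometrically as the closed subspace of $L^2(\Omega_1;H_2)$ spanned by $(f_m\otimes h_n)_{m,n}$.

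\emph{Proof of (1).} For $T\in\g(H_1,\g(H_2,X))$ I pass to $\iota_{\g(H_2,X)}T\in L^2(\Omega_1;\g(H_2,X))$ (isometrically), then apply Proposition~\ref{prop:L2gamma}(1) with measure space $\Omega_1$ and Hilbert space $H_2$ to obtain an operator in $\g(L^2(\Omega_1;H_2),X)$ of $\g$-norm at most $T_2(X)\|T\|_{\g(H_1,\g(H_2,X))}$. Restricting this operator to $H_1\overline\otimes H_2\hookrightarrow L^2(\Omega_1;H_2)$ does not increase the $\g$-norm (ideal property~\eqref{eq:ideal}), and a direct computation on basis tensors $f_m\otimes h_n$ shows that the restriction is precisely the image $\tilde T$ of $T$ under the map of Proposition~\ref{prop:alpha}(1). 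Hence $\|\tilde T\|_{\g(H_1\overline\otimes H_2,X)}\le T_2(X)\|T\|_{\g(H_1,\g(H_2,X))}$, which gives $(\alpha^+)$ with constant $\le T_2(X)$.

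\emph{Proof of (2).} For $\tilde T\in\g(H_1\overline\otimes H_2,X)$ I reverse the direction. Precompose with the orthogonal projection $P\colon L^2(\Omega_1;H_2)\to H_1\overline\otimes H_2$ to obtain $\tilde T\circ P\in\g(L^2(\Omega_1;H_2),X)$ of the same $\g$-norm (since $\|P\|=1$). Proposition~\ref{prop:L2gamma}(2) then produces a kernel $k\in L^2(\Omega_1;\g(H_2,X))$ representing $\tilde T\circ P$, with $\|k\|_{L^2}\le C_2(X)\|\tilde T\|_{\g}$. I define $T\colon H_1\to\g(H_2,X)$ by the Bochner integrals $Tf_m:=\int_{\Omega_1}f_m(\omega_1)k(\omega_1)\,d\P_1(\omega_1)$; on generators this satisfies $T(h_1)(h_2)=\tilde T(h_1\otimes h_2)$. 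The key additional fact is that, because $\tilde T\circ P$ annihilates $H_1^\perp\overline\otimes H_2$, the kernel $k$ lies in the range of $\iota_{\g(H_2,X)}$ (the first Wiener chaos); this is what makes $T$ genuinely $\g$-radonifying, with $\|T\|_{\g(H_1,\g(H_2,X))}=\|k\|_{L^2}\le C_2(X)\|\tilde T\|_{\g}$, proving $(\alpha^-)$.

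\emph{Main obstacle.} The delicate step is the first-chaos identification in (2). For general Banach $Y$ the projection onto first chaos in $L^2(\Omega_1;Y)$ is not bounded, so one cannot simply project $k$ onto the image of $\iota_{\g(H_2,X)}$. One has to exploit the annihilation relation $\int_{\Omega_1} f(\omega_1)\, k(\omega_1)(h)\,d\P_1(\omega_1)=0$ for $f\in H_1^\perp$, $h\in H_2$ forced by the vanishing of $\tilde T\circ P$ on $H_1^\perp\overline\otimes H_2$. The cleanest way is to reduce first to finite-rank $\tilde T$: then $H_1$ is effectively finite-dimensional and the annihilation relation immediately places $\omega_1\mapsto k(\omega_1)(h)$ in $H_1\otimes X$ for every $h$ (weak and strong closures coincide in finite dimensions), forcing $k$ to be genuinely of the form $\sum_m f_m\otimes z_m$ with $z_m\in\g(H_2,X)$. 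Since property $(\alpha^-)$ needs checking only on finite doubly-indexed families $(x_{mn})$, the general conclusion follows by such a finite-rank approximation.
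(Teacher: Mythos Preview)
Your proof is correct, but it takes a more abstract and noticeably longer route than the paper's. The paper works directly at the level of the defining double-sum inequalities: setting $h_{mn}:=r_m'r_n''$, the family $(h_{mn})$ is orthonormal in $L^2(\Omega'\times\Omega'')$, so by \eqref{eq:gamma} one has
\[
\E\Big\|\sum_{m,n}\gamma_{mn}x_{mn}\Big\|^2 \;=\; \Big\|\sum_{m,n} h_{mn}\otimes x_{mn}\Big\|_{\gamma(L^2(\Omega'\times\Omega''),X)}^2.
\]
A single application of Proposition~\ref{prop:L2gamma} (with $S=\Omega'\times\Omega''$ and $H=\R$), combined with the Rademacher--Gaussian comparison (using finite cotype for the reverse direction in~(2)), then yields both parts in a few lines; no iterated $\gamma$-spaces and no first-chaos considerations are needed. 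Your approach instead passes through the operator-theoretic characterisation of Proposition~\ref{prop:alpha}, realises $H_1$ as a Gaussian subspace of $L^2(\Omega_1)$, and applies Proposition~\ref{prop:L2gamma} with $H=H_2$. This is a legitimate alternative and makes the link between the iterated-$\gamma$ picture and type/cotype more explicit, but for part~(2) it creates the extra ``first-chaos'' obstacle, which you correctly circumvent by reducing to finite-rank $\tilde T$. The paper's direct argument avoids this detour entirely.
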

\begin{proof}
(1): \
Since $(r_m'r_n'')_{m,n\ge 1}$ is an orthonormal system
in $L^2(\Omega'\times\Omega'')$,
part (1) is a consequence of \cite[Theorem 1.3]{DoSu}.
A more direct proof, which can be modified to give part (2) as well, runs as follows.

Set $h_{mn} := r_m'r_n''$.
Estimating Rade\-macher sums by Gaussian sums (see \cite[Proposition 12.11]{DJT} or \cite[Lemma 4.5]{LeTa}) and using
\eqref{eq:gamma} and Proposition \ref{prop:L2gamma}(1), we obtain
\begin{align*}  \E \Bigl\| \sum_{m=1}^M\sum_{n=1}^N r_{mn} x_{m n} \Bigr\|^2
& \le \tfrac12\pi \E \Bigl\| \sum_{m=1}^M\sum_{n=1}^N \gamma_{mn} x_{m n} \Bigr\|^2
\\ & = \tfrac12\pi \Bigl\| \sum_{m=1}^M\sum_{n=1}^N h_{mn} \otimes x_{m n}
\Bigr\|_{\gamma(L^2(\Omega'\times\Omega''),X)}^2
\\ & \le \tfrac12\pi (T_2(X))^2
\Bigl\| \sum_{m=1}^M\sum_{n=1}^N h_{mn} \otimes x_{m n} \Bigr\|_{L^2(\Omega'\times\Omega'';X)}^2
\\ & = \tfrac12\pi (T_2(X))^2
\E'\E'' \Bigl\| \sum_{m=1}^M\sum_{n=1}^N r_m'r_n'' x_{m n} \Bigr\|^2.
\end{align*}
This gives the result.

(2):\ This is proved in the same way, this time using Proposition \ref{prop:L2gamma}(2)
along with the fact that in the presence of finite cotype, Gaussian sums can be estimated
by Rademacher sums (see \cite[Theorem 12.27]{DJT} or \cite[Proposition 9.14]{LeTa}).
\end{proof}

\begin{lemma}\label{lem:squarefunction}
Let $X$ be a Banach function space with finite cotype and
let $(S,\Sigma,\mu)$ be a $\sigma$-finite measure space.
Let $G_n\in L^2(S;X)$,  $1\leq n\leq N$, be functions taking values in a finite-dimensional subspace of $X$.
Then for all $1\le p<\infty$ we have
\[\Big(\E\Big\|\sum_{n=1}^N r_n G_n\Big\|_{\gamma(L^2(S),X)}^p\Big)^{1/p}\eqsim_{p,X}
\Big\|\Big(\sum_{n=1}^N \int_S |G_n|^2\, d\mu\Big)^{1/2}\Big\| .\]
\end{lemma}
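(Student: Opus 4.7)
The strategy is to identify the $\gamma$-norm with a square-function norm in a mixed-norm Banach function space, and then to apply a lattice Khintchine inequality there. First, by the Kahane-Khintchine inequality applied in the Banach space $\gamma(L^2(S),X)$, it suffices to prove the case $p=2$ with constants depending only on $p$.

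The key ingredient is the square-function representation of $\gamma$-norms for Banach function spaces $Y$ with finite cotype: for any $F\in L^2(S;Y)$ with range in a finite-dimensional subspace of $Y$,
\[
\|F\|_{\gamma(L^2(S),Y)} \eqsim_Y \Big\|\Big(\int_S |F|^2\, d\mu\Big)^{1/2}\Big\|_Y.
\]
This is standard (see \cite{NeeCMA}): expand $F=\sum_j f_j\otimes y_j$ in an orthonormal basis $(f_j)$ of its $L^2(S)$-range (possible by Gram-Schmidt since the range of $F$ is finite dimensional), so that $\|F\|_{\gamma(L^2(S),Y)}^2 = \E\|\sum_j \gamma_j y_j\|_Y^2$; apply the Khintchine-Maurey square-function inequality in $Y$ (which, combined with the Gaussian-Rademacher comparison, requires finite cotype) to get $\eqsim_Y \|(\sum_j|y_j|^2)^{1/2}\|_Y^2$; and observe that $\int_S|F|^2\,d\mu=\sum_j|y_j|^2$ by orthonormality of $(f_j)$. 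In other words, on finite-rank functions $\gamma(L^2(S),Y)$ is isomorphic to the mixed-norm Banach function space $Y(L^2(S))$ with norm $\|F\|_{Y(L^2(S))}:=\|(\int_S|F|^2\,d\mu)^{1/2}\|_Y$. Applying this to $F=\sum_{n=1}^N r_n(\om) G_n$ for each $\om\in\O$ and taking expectations yields
\[
\E\Big\|\sum_{n=1}^N r_n G_n\Big\|_{\gamma(L^2(S),X)}^2 \eqsim_X \E\Big\|\sum_{n=1}^N r_n G_n\Big\|_{X(L^2(S))}^2.
\]

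The space $X(L^2(S))$ is itself a Banach function space (on $T\times S$, where $T$ is the underlying measure space of $X$), and inherits finite cotype from $X$. The lattice Khintchine-Maurey inequality applied in $X(L^2(S))$ therefore gives
\[
\Big(\E\Big\|\sum_{n=1}^N r_n G_n\Big\|_{X(L^2(S))}^2\Big)^{1/2} \eqsim \Big\|\Big(\sum_{n=1}^N |G_n|^2\Big)^{1/2}\Big\|_{X(L^2(S))} = \Big\|\Big(\sum_{n=1}^N \int_S |G_n|^2\, d\mu\Big)^{1/2}\Big\|_X,
\]
where the final equality is just the unpacking of the $X(L^2(S))$-norm. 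Chaining the three displays and absorbing the $p=2$-to-$p$ conversion from Kahane's inequality completes the proof.

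The main obstacle is the $\gamma$-square-function identification of the second step, which is exactly where finite cotype of $X$ enters essentially; the third step is then a direct application of the standard lattice Khintchine inequality in the mixed-norm space $X(L^2(S))$, and the finite-dimensional-range hypothesis on each $G_n$ is used only to keep the orthonormal-basis expansion finite, sidestepping any approximation arguments.
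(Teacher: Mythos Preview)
Your proof is correct, and it takes a somewhat different route from the paper's. The paper first passes to Gaussians to write $\E\|\sum_n r_n G_n\|_{\gamma(L^2(S),X)}^2$ as $\|G\|_{\gamma(\ell^2_N,\gamma(L^2(S),X))}^2$, then invokes property~$(\alpha)$ (Proposition~\ref{prop:alpha}) to identify this with $\|G\|_{\gamma(L^2(S)\otimes\ell^2_N,X)}^2$, and finally applies the square-function description of $\gamma$-norms once over the tensor Hilbert space. You instead apply the square-function description of $\gamma(L^2(S),X)$ pointwise in~$\omega$ to land in the concrete mixed-norm lattice $X(L^2(S))$, and then run the lattice Khintchine--Maurey inequality there. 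Your argument is more hands-on and sidesteps the abstract $\gamma(H_1,\gamma(H_2,X))\eqsim\gamma(H_1\overline\otimes H_2,X)$ machinery entirely; the paper's argument is more modular in that it reuses that machinery already set up. One small remark: the Rademacher version of the Khintchine--Maurey square-function equivalence holds in \emph{every} Banach lattice, so your appeal to finite cotype of $X(L^2(S))$ in the third step is not actually needed (though the claim is true); finite cotype of~$X$ is genuinely used only in your second step, for the Gaussian--Rademacher comparison inside the $\gamma$-norm identification.
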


\begin{proof}
By the Kahane-Khintchine inequalities it suffices to consider $p=2$.
By \cite[Proposition 12.11 and Theorem 12.27]{DJT},
\[\E\Big\|\sum_{n=1}^N r_n G_n\Big\|_{\gamma(L^2(S),X)}^2\eqsim
\E\Big\|\sum_{n=1}^N \gamma_n G_n\Big\|_{\gamma(L^2(S),X)}^2 = \|G\|_{\gamma(\ell^2_N;\gamma(L^2(S),X))}^2.\]
Moreover, by Proposition \ref{prop:alpha},
$\gamma(\ell^2_N,\gamma(L^2(S),X))\eqsim \gamma(L^2(S)\otimes\ell^2_N,X))$ isomorphically. Now the result follows
from (the proof of) \cite[Corollary 2.10]{NW1}.
\end{proof}

\subsection{The UMD property and martingale type}(See \cite{Burk01, Pi75, RF}).
A Banach space $X$ is called a {\em UMD space} if for some $p\in (1, \infty)$ (equivalently,
for all $p\in (1,\infty)$; see \cite{Burk01}) there is a constant $\beta\ge 0$ such that for all
finite $X$-valued $L^p$-martingale difference sequences $(d_n)_{n=1}^N$
and sequence of signs $(\varepsilon_n)_{n=1}^N$ one has
\begin{equation}\label{eq:UMD}
 \E \Big\| \sum_{n=1}^N \varepsilon_n d_n\Big\|^p \le \beta^p  \E \Big\| \sum_{n=1}^N  d_n\Big\|^p.
\end{equation}
The least admissible constant in this definition is called the {\em UMD$_p$-constant} of $X$ and is
denoted by $\beta_{p,X}$. If $(r_n)_{n\ge 1}$ is a Rademacher sequence which is independent of $(d_n)_{n=1}^N$,
then \eqref{eq:UMD} and its counterpart applied to the martingales
$\varepsilon_n d_n$ easily imply the two-sided randomised inequality
\begin{equation}\label{eq:UMDrandom}
\frac{1}{\beta_{p,X}^p}\E\Big\| \sum_{n=1}^N d_n \Big\|^p\leq \E \Big\| \sum_{n=1}^N r_n d_n \Big\|^p
\end{equation}
where now $(r_n)_{n\ge 1}$ is a Rademacher sequence independent of $(d_n)_{n=1}^N$.

Examples of UMD spaces include Hilbert spaces and the Lebesgue spaces $L^p(S)$ for $1<p<\infty$.
Noting that every UMD space is reflexive, it follows that $L^\infty(S)$ and $L^1(S)$ are not UMD spaces.

Let $p\in [1,2]$.
 A Banach space $X$ has {\em martingale type $p$} if there exists a constant $\mu\ge 0$ such that for all
 finite $X$-valued martingale difference sequences $(d_n)_{n=1}^N$ we have
\begin{equation}\label{eq:martingaletype}
\E\Big\| \sum_{n=1}^N d_n\Big\|^p \le \mu^p\sum_{n=1}^N\E \| d_n\|^p.
\end{equation}
The least admissible constant in this definition is denoted by $\mu_{p,X}$.

Trivially, martingale type $p$ implies type $p$.
Hilbert spaces have martingale type $2$ and every Lebesgue space $L^p(S)$, $1\le p<\infty$, has martingale type $p\wedge 2$.
In fact we have the following equivalence (see \cite{Brz1}):

\begin{proposition}\label{prop:type-Mtype} Let $p\in [1,2]$.
\begin{enumerate}[\rm(1)]
 \item
 A UMD Banach space $X$ has martingale type $p$ if and only if it has type $p$.
\item
 A Banach lattice $X$ has martingale type $2$ if and only if it has type $2$.
\end{enumerate}
\end{proposition}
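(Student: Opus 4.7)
The ``only if'' direction in both parts is immediate: setting $d_n = r_n x_n$ (a martingale difference sequence with respect to the natural filtration of the Rademachers) in \eqref{eq:martingaletype} and applying Jensen's inequality to pass from the $p$-th to the $L^2$ moment on the left-hand side recovers the defining inequality of type $p$. The work is therefore in the converse directions.

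For part (1), the plan is to combine the randomised UMD inequality \eqref{eq:UMDrandom} with the type $p$ hypothesis. Given a martingale difference sequence $(d_n)_{n=1}^N$ in $X$, \eqref{eq:UMDrandom} gives
\[
\E\Big\|\sum_{n=1}^N d_n\Big\|^p \leq \beta_{p,X}^p \, \E_d \E_r \Big\|\sum_{n=1}^N r_n d_n\Big\|^p,
\]
with $(r_n)$ an independent Rademacher sequence. Conditioning on a realisation of $(d_n(\omega))_n$ and applying Jensen's inequality (valid since $p \leq 2$) followed by the defining type $p$ inequality yields
\[
\E_r \Big\|\sum_{n=1}^N r_n d_n(\omega)\Big\|^p \leq \Big(\E_r \Big\|\sum_{n=1}^N r_n d_n(\omega)\Big\|^2\Big)^{p/2} \leq T_p(X)^p \sum_{n=1}^N \|d_n(\omega)\|^p.
\]
Integrating in $\omega$ and combining with the previous display gives $\mu_{p,X} \leq \beta_{p,X}\,T_p(X)$.

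For part (2), let $X$ be a Banach lattice with type $2$. The approach is to convert type $2$ into a square-function inequality and then invoke the renorming theory of Banach lattices together with Pisier's martingale-type theorem. Since nontrivial type in a Banach lattice forces finite cotype, the Khintchine--Maurey inequality applies and identifies type $2$ with the lattice $2$-convexity estimate
\[
\Big\|\Big(\sum_{n=1}^N |x_n|^2\Big)^{1/2}\Big\|_X \lesssim \Big(\sum_{n=1}^N \|x_n\|_X^2\Big)^{1/2}.
\]
A $2$-convex Banach lattice of finite cotype admits an equivalent uniformly $2$-smooth norm (classical renorming theory of Figiel and Pisier), and by Pisier's theorem uniform $2$-smoothness is equivalent to martingale type $2$. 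Since martingale type is an isomorphism invariant, $X$ has martingale type $2$. The main obstacle is that, in the absence of UMD, one cannot randomise martingale differences as in part (1); the lattice structure must instead be exploited through the passage to an equivalent uniformly smooth norm.
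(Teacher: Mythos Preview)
Your argument is correct and follows essentially the same route as the paper's. For part (1), the paper also combines the randomised UMD inequality \eqref{eq:UMDrandom} with the type $p$ inequality applied pointwise in $\omega$; for part (2), the paper cites \cite[Theorem 1.f.17]{LiTz} to conclude that $X$ is $2$-convex and $q$-concave for some $q<\infty$ (equivalently, of finite cotype), then \cite[Theorem 1.f.1]{LiTz} for the $2$-smooth renorming, and finishes with Pisier's theorem \cite{Pi75}---exactly your chain of implications, phrased with slightly different terminology.

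One small slip in your ``only if'' direction: Jensen's inequality goes the wrong way. For $p\le 2$ one has $(\E\|\cdot\|^p)^{1/p}\le (\E\|\cdot\|^2)^{1/2}$, so the martingale type $p$ bound on the $L^p$ moment of $\sum r_n x_n$ does not by itself control the $L^2$ moment appearing in the paper's definition of type $p$. The correct bridge is the Kahane--Khintchine inequality, which makes all $L^q$ moments of Rademacher sums comparable. (The paper itself records this implication as ``trivial'' before the proposition and does not spell it out.)
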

\begin{proof}
(1): \ Suppose that $X$ has type $p$ and let $(\wt r_n)_{n\geq 1}$ be a Rademacher sequence on another probability space $(\wt\Omega,\wt \P)$.
By \eqref{eq:UMDrandom} and Fubini's theorem,
\[
\E \Big\| \sum_{n=1}^N d_n\Big\|^p \leq \beta_{p,X}^p \E\wt \E\Big\| \sum_{n=1}^N \wt r_n d_n\Big\|^p
 \leq \beta_{p,X}^p \tau_{p,X}^p \E \sum_{n=1}^N \|d_n\|^p.
\]
It follows that $X$ has martingale type $p$.

(2): \  Suppose that $X$ has type $2$. By \cite[Theorem 1.f.17]{LiTz}, $X$ is $2$-convex and $q$-concave for some
$q<\infty$. By \cite[Theorem 1.f.1]{LiTz}, this implies that $X$ is $2$-smooth. Hence by \cite{Pi75},
$X$ has martingale type $2$.
\end{proof}

\section{$\ell^{s}$-Boundedness}

For Rademacher sums with values in a Banach lattice
$X$ with finite cotype we have the two-sided estimate
\begin{align}\label{eq:sqf}
\Big(\E \Big\n \sum_{n=1}^N r_n x_n\Big\n ^2\Big)^{1/2}
\eqsim \Big(\E \Big\n \sum_{n=1}^N \gamma_n x_n\Big\n ^2\Big)^{1/2}
\eqsim \Big\|\Big(\sum_{n=1}^N |x_n|^2\Big)^{1/2}\Big\|
\end{align}
with implied constants depending only on $X$ (see \cite[Proposition 12.11, Theorems 12.27 and 16.18]{DJT}).
The expression on the right-hand side acquires its meaning through the so-called Krivine calculus.
We refer to \cite[Section II.1.d]{LiTz} for a detailed exposition of this calculus.
In all applications below, $X$ is a Banach function
space and in this special case the expression can be defined in a pointwise sense in the obvious
manner. If $X$ and $Y$ are Banach lattices with finite cotype,
a family $\mathscr{T}$ of
bounded linear operators from $X$ to $Y$ is $R$-bounded if and only if there is a constant
$C\ge 0$ such that for all finite
sequences $(T_n)_{n=1}^N$ in $\mathscr{T}$ and $(x_n)_{n=1}^N$ in $X$ we have
$$
\Big\|\Big(\sum_{n=1}^N |T_n x_n|^2\Big)^{1/2}\Big\|  \leq C \Big\|\Big(\sum_{n=1}^N |x_n|^2\Big)^{1/2}\Big\| ,
$$
This motivates the following definition.

\begin{definition}\label{def:Rs}
Let $X$ and $Y$ be a Banach lattices and let $s\in [1, \infty]$.
A family of operators $\mathscr{T}\subseteq \calL(X,Y)$ is
called {\em $\ell^{s}$-bounded} if there is a constant $C\ge 0$ such that for all finite
sequences $(T_n)_{n=1}^N$ in $\mathscr{T}$ and $(x_n)_{n=1}^N$ in $X$ we have
$$
\Big\|\Big(\sum_{n=1}^N |T_n x_n|^s\Big)^{1/s}\Big\|  \leq C \Big\|\Big(\sum_{n=1}^N |x_n|^s\Big)^{1/s}\Big\| ,
$$
with the obvious modification if $s=\infty$.
\end{definition}

The least admissible constant $C$ in Definition \ref{def:Rs}
is called the {\em $\ell^{s}$-bound} of $\mathscr{T}$ and is
denoted by $R^{\ell^s}(\mathscr{T})$ and usually abbreviated as $R^{s}(\mathscr{T})$.

The notion of $\ell^{s}$-boundedness was introduced in \cite{Weis-maxreg} in the context of the
so-called (deterministic) maximal regularity problem; for a systematic treatment we refer the reader to
 \cite{KunUll, UllmannPhD}.

\begin{example}[\hbox{\cite[Remark 2.7]{KunUll}}]\label{ex:real}
Let $(S,\mu)$ be a measure space.
For all $s\in [1, \infty]$, the unit ball of $\mathscr{L}(L^s(S))$
is $\ell^{s}$-bounded, with constant $R^{s}(\mathscr{T})\leq 1.$
\end{example}

\begin{remark}
Let $p_i\in [1, \infty)$ and let $(S_i, \mu_i)$ be a measure space for $i=1, 2$.
Let $T:L^{p_1}(S_1)\to L^{p_1}(S_2)$ be a bounded operator. It follows from
\cite[Lemma 1.7]{Buh} that the singleton $\{T\}$ is $\ell^{1}$-bounded if and only if $T$ can be
written as the difference of two positive operators. In this result one can
replace $L^{p_i}(S_i)$ by certain Banach function spaces. This shows that
for an operator family $\mathscr{T}$ to be $\ell^{1}$-bounded imposes a rather
special structure on the operators in $\mathscr{T}$.
\end{remark}

Let $X$ be a Banach lattice. We denote by $X(\ell^s_N)$ the Banach space of all
sequences $(x_n)_{n=1}^N$ in $X$ endowed with
the norm
\[\|(x_n)_{n=1}^N\|_{X(\ell^s_N)} := \Big\|\Big(\sum_{n=1}^N |x_n|^s\Big)^{1/s}\Big\| ,\]
again with the obvious modification if $s=\infty$.
More details on these spaces can be found in \cite[p.\ 47]{LiTz}. Using this terminology,
the definition of $\ell^{s}$-boundedness can be rephrased as saying that
\begin{align}\label{eq:equivRs}\|(T_n x_n)_{n=1}^N\|_{Y(\ell_N^s)}
\leq C \|(x_n)_{n=1}^N\|_{X(\ell_N^s)}\end{align}
for all all finite sequences $(T_n)_{n=1}^N$ in $\mathscr{T}$ and
$(x_n)_{n=1}^N$ in $X$.

For $X = \R$ we have
$X(\ell^s_N)= \ell^s_N$ canonically for all $s\in [1,\infty]$.
For any Banach lattice $X$ the mapping
$$ (t\mapsto f_n(t))_{n=1}^N \ \mapsto \ \big(t\mapsto (f_n(t))_{n=1}^N\big)$$ establishes an
isometric isomorphism
\begin{align}\label{eq:Rs-LpX}
(L^p(S;X))(\ell^s_N) = L^p(S;X(\ell^s_N))
\end{align}
for all $p\in [1,\infty]$ and $s\in [1,\infty]$.

The following properties have been stated in \cite[Section 3.1]{UllmannPhD}. Recall that every
reflexive Banach lattice has
order continuous norm (see \cite[Section 2.4]{MN} for details).

\begin{proposition}\label{prop:suff}
Let $X$ and $Y$ be Banach lattices and let $s, s_0,s_1\in [1, \infty]$.
Let $\mathscr{T}\subseteq \calL(X,Y)$ be a family of bounded operators.
\begin{enumerate}
\item[\rm(1)] If $\mathscr{T}$ is $\ell^{s}$-bounded,
then also its strongly closed absolutely convex hull
$\overline{\rm{absco}}(\mathscr{T})$ is
$\ell^{s}$-bounded and
\[R^{s}\big(\overline{\rm{absco}}(\mathscr{T})\big) = R^{s}(\mathscr{T}).\]

\item[\rm(2)] The family
$\mathscr{T}$ is $\ell^{s}$-bounded if and only if the adjoint family
$\mathscr{T}^*$
is $\ell^{s'}$-bounded, where $\frac1s+\frac1{s'}=1$, and in this case we have
\[R^{s'}(\mathscr{T}^*) = R^{s}(\mathscr{T}).\]

\item[\rm(3)]
Suppose that $\mathscr{T}\subseteq\calL(X,Y)$ is both $\ell^{s_0}$-bounded and
$\ell^{s_1}$-bounded. If at least one of the spaces $X$ or $Y$ has order continuous norm,
then $\mathscr{T}$ is $\ell^{s_{\theta}}$-bounded for all $\theta\in (0,1)$,
where  $\frac1{s_{\theta}}=\frac{1-\theta}{s_0} + \frac{\theta}{s_1}$,
 and
\[R^{s_\theta}(\mathscr{T})\leq (R^{s_0}(\mathscr{T}))^{1-\theta} (R^{s_1}(\mathscr{T}))^{\theta}.\]
\end{enumerate}
\end{proposition}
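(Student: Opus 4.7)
The three parts of the proposition can be handled largely independently: (1) is a convexity argument, (2) is a standard duality computation, and (3) is a complex interpolation argument in which the order-continuity hypothesis plays the decisive role.

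For part (1), I would first observe that, for fixed $(x_n)_{n=1}^N$, the assignment $(T_1,\ldots,T_N)\mapsto \|(T_n x_n)_{n=1}^N\|_{Y(\ell^s_N)}$ is a seminorm which is continuous in the strong operator topology of each coordinate; it therefore suffices to treat $T_n\in \mathrm{absco}(\mathscr{T})$ and pass to the strong closure afterwards by continuity. Writing $T_n=\sum_k a_{n,k}S_{n,k}$ with $S_{n,k}\in \mathscr{T}$ and $\sum_k |a_{n,k}|\le 1$, I would apply the triangle inequality in the Krivine calculus together with Jensen's inequality (using convexity of $t\mapsto t^s$ for $s\ge 1$) to obtain the pointwise lattice estimate
\[
|T_n x_n|^s \le \sum_k |a_{n,k}|\,|S_{n,k}x_n|^s = \sum_k \bigl|S_{n,k}\bigl(|a_{n,k}|^{1/s}x_n\bigr)\bigr|^s.
\]
Summing over $n$ and applying the $\ell^{s}$-bound of $\mathscr{T}$ to the enlarged finite family $\{S_{n,k}\}_{n,k}$ with vectors $\{|a_{n,k}|^{1/s}x_n\}_{n,k}$ then yields the desired bound, since $\sum_{n,k} |a_{n,k}|\,|x_n|^s \le \sum_n |x_n|^s$. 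The case $s=\infty$ is analogous with suprema in place of sums.

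For part (2), I would invoke the duality $X(\ell^s_N)^* = X^*(\ell^{s'}_N)$ under the pairing $\langle (x_n),(x_n^*)\rangle := \sum_n \langle x_n,x_n^*\rangle$, which is valid for any Banach lattice $X$ and finite $N$ (as $X(\ell^s_N)$ is just $X^N$ equipped with an equivalent lattice norm). Then
\[
\|(T_n^* y_n^*)_{n=1}^N\|_{X^*(\ell^{s'}_N)}
= \sup_{\|(x_n)\|_{X(\ell^s_N)}\le 1} \Bigl|\sum_{n=1}^N \langle T_n x_n,y_n^*\rangle\Bigr|
\le R^{s}(\mathscr{T})\,\|(y_n^*)_{n=1}^N\|_{Y^*(\ell^{s'}_N)},
\]
giving $R^{s'}(\mathscr{T}^*)\le R^{s}(\mathscr{T})$. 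The reverse inequality follows by applying the same argument to $\mathscr{T}^*$ and restricting bi-adjoints from $X^{**}$ to $X\hookrightarrow X^{**}$.

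Part (3) is the main obstacle and hinges on the Calder\'on--Lozanovskii interpolation formula for Banach lattices. The key identity is
\[
[X(\ell^{s_0}_N),X(\ell^{s_1}_N)]_\theta = X(\ell^{s_\theta}_N),
\]
and likewise for $Y$, which holds isometrically (as a Calder\'on product) provided that one of the spaces being interpolated has order continuous norm. For finite $N$ the lattice $X(\ell^s_N)$ is a finite direct sum of copies of $X$ and inherits order continuity from $X$ (and analogously for $Y$), so the hypothesis on $X$ or $Y$ transfers to the sequence-valued spaces. With $T_1,\ldots,T_N\in\mathscr{T}$ fixed, the diagonal map $\Phi\colon(x_n)_{n=1}^N\mapsto (T_n x_n)_{n=1}^N$ is bounded of norm at most $R^{s_i}(\mathscr{T})$ from $X(\ell^{s_i}_N)$ to $Y(\ell^{s_i}_N)$ for $i=0,1$, and complex interpolation supplies the bound $R^{s_0}(\mathscr{T})^{1-\theta}R^{s_1}(\mathscr{T})^\theta$ on $\Phi\colon X(\ell^{s_\theta}_N)\to Y(\ell^{s_\theta}_N)$; taking the supremum over admissible $T_1,\ldots,T_N\in\mathscr{T}$ yields the stated $\ell^{s_\theta}$-bound.
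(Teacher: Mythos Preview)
Your proposal is correct and follows essentially the same route as the paper, which only sketches the proof by citing the analogous $R$-boundedness convexity argument for (1), the duality $X(\ell^s_N)^* = X^*(\ell^{s'}_N)$ for (2), and complex interpolation (via Calder\'on--Lozanovskii) for (3). Your write-up supplies considerably more detail than the paper does; in particular, your pointwise Jensen argument for (1) is a clean lattice-specific variant of the standard convexity proof, and your treatment of (3) correctly identifies why order continuity of $X$ or $Y$ is needed to match the Calder\'on product with the complex interpolation space.
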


For the proof of (1) one can repeat the analogous argument for $R$-boundedness (see
\cite[Theorem 2.13]{KuWe}). Assertion (2) follows from the identification $X(\ell^s_N)^* =
X^*(\ell^{s'}_N)$ (see \cite[p. 47]{LiTz}).
Assertion (3) follows by complex interpolation (see \cite[pages 57--58]{UllmannPhD}).

\section{$\ell^{s}$-Boundedness of convolution operators}\label{sec:det}

If $X$ is a Banach lattice and $J\subseteq \R_+$ is a finite subset,
for $f\in L_{\rm loc}^1(\R^d;X)$ we may define
\[(\widetilde{M}_J f)(\xi) := \sup_{r\in J} \frac1{|B_\xi(r)|}\int_{B_\xi(r)}  |f(\eta)|\, d\eta,\quad \xi\in\R^d,\]
where the modulus and supremum are taken in the lattice sense of $X$.

\begin{definition}\label{def:HL}
We say that $X$ has the {\em Hardy-Littlewood property} (briefly, $X$ {\em is an HL space})
if for all $p\in (1,\infty)$ and $d\ge 1$, all finite subsets $J\subseteq \R_+$, and all $f\in
L^p(\R^d;X)$ we have $\widetilde{M}_Jf\in L^p(\R^d;X)$ and there is a finite constant
$C = C_{p,d,X}\ge 0$, independent of $J$ and $f$, such that
\[
\|\widetilde{M}_J f\|_{L^p(\R^d;X)}\leq C\|f\|_{L^p(\R^d;X)}, \quad f\in L^p(\R^d;X).
\]
In this situation we will say that $\wt M$ is {\em bounded on $L^p(\R^d;X)$}.
\end{definition}
In \cite{GMT2} it has been proved that the Hardy-Littlewood property for fixed $p\in (1, \infty)$
and $d\geq 1$ implies the corresponding property for all $p\in (1, \infty)$
and $d\geq 1$, that is, the property is independent of $p\in (1, \infty)$
and $d\geq 1$.

In order to be able to deal with lattice suprema indexed by infinite sets $J$
we need to introduce some terminology. A Banach lattice
$X$ is called {\em monotonically complete} if $\sup_{i\in I} x_i$
exists for every norm bounded increasing net $(x_i)_{i\in I}$
(see \cite[Definition 2.4.18]{MN}). Recall the following two facts \cite[Proposition 2.4.19]{MN}:
\begin{itemize}
\item Every dual Banach lattice is monotonically complete.
\item If $X$ is monotonically complete, then  it has the weak Fatou property, i.e.,
there exists an $r$ only depending on $X$ such that
\[\big\|\sup_{i\in I} x_i\big\| \leq r\sup_{i\in I} \|x_i\|. \]
\end{itemize}

If $X$ is a monotonically complete HL space, then the Hardy-Littlewood maximal function
\[(\widetilde{M} f)(\xi) := \sup_{r>0} \frac1{|B_\xi(r)|}\int_{B_\xi(r)}  |f(\eta)|\, d\eta,\quad \xi\in\R^d,\]
is well-defined and bounded on each $L^p(\R^d;X)$.

It is known (see \cite[Theorem 2.8]{GMT1}) that HL spaces are $p$-convex for some
$p\in (1,\infty)$, i.e., there is a constant $C$ such that
$$ \Big\n \Big(\sum_{n=1}^N |x_n|^p \Big)^{1/p}\Big\n \le C_p \Big(\sum_{n=1}^N
\n x_n\n^p \Big)^{1/p}$$
for all finite subsets $x_1,\dots,x_N$ in $X$.
It is easy to check that $X=L^\infty$ has the HL property. In
\cite[Proposition 2.4, Remark 2.9]{GMT1} it is shown that $\ell^1$ fails the HL property.

The following deep result is proved in \cite{Bourgain-ext} and \cite[Theorem 3]{RF}.

\begin{proposition}\label{prop:UMD}
For a Banach function space $X$ the following assertions are equivalent:
\begin{enumerate}
\item[\rm(1)] $X$ is a UMD space;
\item[\rm(2)] $X$ and $X\s$ are HL spaces.
\end{enumerate}
\end{proposition}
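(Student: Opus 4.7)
The plan is to follow the two classical arguments cited in the statement, namely Bourgain's theorem for (1)$\Ra$(2) and Rubio de Francia's theorem for the converse; a fresh proof does not seem within reach.

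For (1)$\Ra$(2), assume $X$ is UMD. Since $X$ is then reflexive, $X\s$ is a UMD Banach function space as well, so by symmetry it suffices to show that any UMD Banach function space has the HL property. The strategy is to exploit the characterisation of UMD via the vector-valued Hilbert transform: starting from its boundedness on $L^p(\R;X)$, one extends to the Riesz transforms on $L^p(\R^d;X)$ by the method of rotations, obtaining a Mikhlin-type Fourier multiplier theorem in the $X$-valued setting. Specialised to dyadic or smooth Littlewood--Paley projections, this produces square function estimates on $L^p(\R^d;X)$ in which the square function is taken in the Krivine lattice sense. The scalar Hardy--Littlewood maximal function is pointwise dominated in the lattice by the Poisson maximal function, which is then controlled by a Littlewood--Paley $g$-function argument in conjunction with these square function bounds; this delivers the desired bound on $\wt M$.

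For (2)$\Ra$(1), assume both $X$ and $X\s$ are HL. The goal is to verify UMD via boundedness of the Hilbert transform $H$ on $L^p(\R;X)$. The key device is Rubio de Francia's iteration algorithm: from any $f\in L^p(\R;X)$ one constructs a lattice-valued dominator $\mathscr R|f|\in L^p(\R;X)$ satisfying $|f|\le \mathscr R|f|$ pointwise, $\n\mathscr R|f|\n_{L^p(\R;X)}\le 2\n f\n_{L^p(\R;X)}$, and the $A_1$-type condition $\wt M(\mathscr R|f|)\le C\,\mathscr R|f|$, which is precisely where the HL hypothesis on $X$ is invoked. A Calder\'on--Zygmund decomposition relative to this dominator, combined with weighted norm inequalities for singular integrals applied fibrewise in the Banach function space, together with a duality argument against $X\s$ (whose HL property handles the cancellation of the bad part via an $H^1$-atomic estimate), yields the $L^p(\R;X)$-boundedness of $H$ and hence the UMD property.

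The main obstacle in both directions is to keep all inequalities \emph{lattice-valued} rather than merely norm-valued: one cannot invoke scalar Calder\'on--Zygmund or Littlewood--Paley theory on the Bochner space $L^p(\R^d;X)$ alone, but must consistently work with the Krivine calculus in $X$, using the $p$-convexity and $q$-concavity available in Banach function spaces to transfer scalar maximal-function and weighted-norm estimates into their lattice-valued counterparts.
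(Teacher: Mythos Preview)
The paper does not supply its own proof of this proposition: it is recorded as a deep result and attributed to \cite{Bourgain-ext} and \cite[Theorem~3]{RF}, with no further argument given. Your proposal explicitly acknowledges this and sets out to sketch those two classical arguments, so at the level of strategy your approach and the paper's coincide.

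One remark on the sketches themselves: your outline of (2)$\Ra$(1) via the Rubio de Francia iteration algorithm and lattice-valued $A_1$ weights is faithful to the actual argument. Your outline of (1)$\Ra$(2), however, routes through a Mikhlin multiplier theorem, Littlewood--Paley square functions, and a Poisson/$g$-function estimate; while this is a plausible alternative path, Bourgain's original proof is considerably more direct, controlling the averaging operators essentially through an integral representation involving the Hilbert transform without building the full square-function machinery. Either way, a self-contained proof would be a substantial undertaking well beyond the scope of the present paper, which is why it is simply cited.
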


We will be interested in the $\ell^{s}$-boundedness of the family of convolution operators
whose kernels $k\in L^1(\R^d)$ satisfy the almost everywhere pointwise bound
\begin{equation}\label{eq:pointbound}
 |k*f| \le Mf
 \end{equation}
for all simple $f:\R^d\to \R$. Let us denote by $\wt{\mathscr{K}}$ the set of all such kernels.

\begin{lemma}\label{lem:L1est}
For every $k\in \wt{\mathscr{K}}$ one has $\|k\|_{L^1(\R^d)}\leq 1$.
\end{lemma}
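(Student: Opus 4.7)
The plan is to test the defining inequality against a simple function that approximately saturates the $L^1$ norm of $k$ via duality at the origin. Fix $\varepsilon > 0$ and choose $R > 0$ large enough that $\int_{|\eta|>R}|k(\eta)|\,d\eta < \varepsilon$. Define
$$
f := \mathbf{1}_{A_+} - \mathbf{1}_{A_-}, \qquad A_\pm := \{\eta \in B(0,R) : \pm k(-\eta) > 0\}.
$$
Because $B(0,R)$ has finite Lebesgue measure, so do $A_+$ and $A_-$, hence $f$ is a simple function with $|f|\le 1$ everywhere. By construction,
$$
(k*f)(0) = \int_{\R^d} k(-\eta) f(\eta)\,d\eta = \int_{B(0,R)} |k(\eta)|\,d\eta \ge \|k\|_{L^1(\R^d)} - \varepsilon.
$$

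Next I would exploit the hypothesis $|k*f| \le Mf$ almost everywhere. Since $|f|\le 1$ pointwise, the scalar Hardy--Littlewood maximal function satisfies $Mf\le 1$ pointwise, and consequently $|(k*f)(\xi)|\le 1$ for almost every $\xi\in\R^d$. Because $k\in L^1(\R^d)$ and $f$ is bounded with compact support, the convolution $k*f$ is continuous on $\R^d$ (this is the standard continuity of translation in $L^1$), so the bound $|k*f|\le 1$ extends from almost everywhere to every point, in particular to $\xi = 0$.

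Combining the two estimates yields $\|k\|_{L^1(\R^d)} - \varepsilon \le 1$, and letting $\varepsilon \to 0$ finishes the proof. The only real subtlety is reconciling the almost-everywhere nature of the defining bound $|k*f|\le Mf$ with the need to evaluate at the specific point $\xi = 0$; this is precisely where continuity of $k*f$ must be invoked. Everything else is routine test-function bookkeeping.
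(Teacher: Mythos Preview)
Your proof is correct and follows essentially the same route as the paper's own argument: both test against $f=\operatorname{sign}(k(-\cdot))$ truncated to a large set (you use a ball $B(0,R)$, the paper uses cubes $[-n,n]^d$), invoke continuity of $k*f$ to upgrade the almost-everywhere bound $|k*f|\le Mf\le 1$ to a pointwise bound at the origin, and then let the truncation exhaust $\R^d$. The only difference is cosmetic---your $\varepsilon$-formulation versus the paper's direct limit $n\to\infty$.
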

\begin{proof}
From
$$ |k*f(x) - k*f(x')| = \Big|\int_\R [k(x-y) - k(x'-y)]f(y)\,dy\Big|
\le \n k(x-\cdot)- k(x'-\cdot)\n_1 \n f\n_\infty$$
and the $L^1$-continuity of translations it follows that
 $k*f$ is a continuous function for $k\in L^1$ and $f\in L^\infty(\R^d)$.
For all functions $f\in L^\infty(\R^d)$ with
$|f|\leq 1$ almost everywhere, it follows from the assumption on $k$ and the observation just made
that for all $x\in \R^d$,
\begin{equation}\label{eq:kvareps}
 |k*f(x)|
 \leq \wt Mf(x)
 \leq 1.
\end{equation}

Consider the functions $f_n(y) := \text{sign}(k(-y))$.
Then \eqref{eq:kvareps} implies that
\[\int_{[-n,n]^d} |k(-y)|\, dy = |k*f(0)| \leq 1.\]
Letting $n$ tend to infinity, we find that $\|k\|_{L^1(\R^d)}\le
1$.
\end{proof}

Below we present classes of examples of such kernels. In particular, if a kernel
$k$ is radially decreasing, then $k\in \wt{\mathscr{K}}$ if and only if
$\|k\|_{L^1(\R^d)}\leq 1$ (see Proposition \ref{prop:Rbddkernelcond} below).

The next proposition shows that in \eqref{eq:pointbound} we may replace the
range space $\R$ by an arbitrary
Banach lattice. Of course, this result is trivial in the case of Banach function
spaces, where the estimate holds in a pointwise sense.

\begin{proposition}\label{prop:replaceX}
 Let $k\in \wt{\mathscr{K}}$ and let $X$ be a monotonically complete Banach lattice. If $f:\R^d\to X$ is a simple function, then almost everywhere
$$ |k*f| \le \wt M|f|.$$
\end{proposition}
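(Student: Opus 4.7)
The plan is to reduce the $X$-valued statement to the scalar hypothesis by representing the values of $f$ in a concrete function-space model. Write $f = \sum_{j=1}^N \mathbf{1}_{A_j}x_j$ with the $A_j$ disjoint and set $v := \sum_{j=1}^N|x_j|$. The principal ideal $X_v$, equipped with its order-unit norm $\n y\n_v := \inf\{\lambda \ge 0:\, |y|\le \lambda v\}$, inherits monotone completeness from $X$ and has $v$ as a strong order unit; Kakutani's representation theorem then provides an isometric lattice isomorphism $\Phi : X_v \to C(K)$ sending $v$ to $\mathbf{1}_K$ for some compact Hausdorff $K$. Since $\Phi$ preserves finite lattice operations as well as bounded suprema, the claim reduces to the analogous inequality in $C(K)$ between $\wt f := \Phi\circ f$ and the corresponding maximal function of $|\wt f|$, where the lattice order is pointwise.

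The first substantive step is to render the scalar null set uniform in the coefficients. For each $s\in\R^N$, the scalar simple function $g_s := \sum_j s_j\mathbf{1}_{A_j}$ satisfies $|k*g_s(\xi)|\le \wt M g_s(\xi)$ a.e., which reads
$$\Big|\sum_j s_j \beta_j(\xi)\Big| \le \sup_{r>0} \sum_j |s_j|\gamma_j(\xi,r),$$
where $\beta_j(\xi) := (k*\mathbf{1}_{A_j})(\xi)$ and $\gamma_j(\xi,r) := |B_\xi(r)|^{-1}|A_j\cap B_\xi(r)|$. For each fixed $\xi$ both sides are Lipschitz in $s$ (the left side linearly, the right with constant at most $1$ since $\sum_j\gamma_j(\xi,r)\le 1$ by disjointness), so taking the union of exceptional sets over $s\in\Q^N$ and extending by continuity produces a single null set $E\subseteq\R^d$ for which the inequality holds for every $\xi\notin E$ and every $s\in\R^N$.

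Specializing $s_j = \wt x_j(\omega)$ with $\wt x_j := \Phi(x_j)$, and using that point evaluation at $\omega\in K$ is a continuous linear functional on $C(K)$ and hence commutes with convolution and with Bochner integration, one obtains
$$|(k*\wt f)(\xi)(\omega)| \le \sup_{r>0}\frac{1}{|B_\xi(r)|}\int_{B_\xi(r)}|\wt f(\eta)(\omega)|\,d\eta,\qquad \xi\notin E,\ \omega\in K.$$
Monotone completeness of $C(K)$ ensures that the lattice supremum $\wt M|\wt f|(\xi) := \sup_{r>0}|B_\xi(r)|^{-1}\int_{B_\xi(r)}|\wt f(\eta)|\,d\eta$ exists in $C(K)$; being a lattice upper bound of these averages it is, at each $\omega\in K$, also a pointwise upper bound. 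Combining with the previous display yields $|(k*\wt f)(\xi)(\omega)| \le \wt M|\wt f|(\xi)(\omega)$ for every $\omega\in K$, which as an inequality between continuous functions on $K$ is exactly the lattice inequality $|(k*\wt f)(\xi)|\le \wt M|\wt f|(\xi)$ in $C(K)$. Transporting back via $\Phi^{-1}$ gives the desired almost everywhere inequality in $X$.

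The main obstacle is the uniformization of the scalar null set: one needs a single $E$ that works simultaneously for all $\omega\in K$, an uncountable range of coefficient choices, and this is precisely what density of $\Q^N$ combined with joint continuity of both sides in $s$ provides. The remaining steps are structural, with Kakutani's theorem supplying the model and the elementary fact that a lattice supremum is automatically a pointwise upper bound giving the final comparison.
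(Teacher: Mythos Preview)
Your argument is correct, but it follows a different path from the paper's. The paper gives a five-line duality argument: for each $0\le x^*\in X^*$ one uses the lattice formula $\langle |u|,x^*\rangle = \sup_{|y^*|\le x^*}\langle u,y^*\rangle$ to test $|k*f(\xi)|$ against positive functionals, reducing immediately to the scalar hypothesis applied to $\langle f,y^*\rangle$, and then bounds $\wt M\langle f,y^*\rangle(\xi)$ by $\langle \wt M f(\xi),|y^*|\rangle$. Your route instead embeds the principal ideal generated by the range of $f$ into a concrete $C(K)$ via Kakutani and argues pointwise in $\omega\in K$. The duality proof is considerably shorter and avoids both the representation machinery and any discussion of the Stonean structure of $K$; on the other hand, your explicit uniformization of the scalar null set over $s\in\Q^N$ makes rigorous a point the paper leaves implicit (the scalar inequality is invoked for uncountably many functionals $y^*$, and one does need that the exceptional set can be chosen independently of $y^*$ --- which, exactly as you observe, follows because $\langle f,y^*\rangle$ depends on $y^*$ only through the $N$ scalars $\langle x_j,y^*\rangle$).
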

\begin{proof}
For all $0\le x\s\in E\s$ and $\xi\in \R^d$ we have (see \cite[Proposition 1.3.7, Lemma 1.4.4]{MN})
\begin{align*}
\lb |k*f(\xi)|,x\s\rb & = \sup_{|y\s|\le x\s}\lb k*f(\xi), y\s\rb
\\ & = \sup_{|y\s|\le x\s} k*\lb f,y\s\rb(\xi)
\\ & \le  \sup_{|y\s|\le x\s} (\wt M \lb f,y\s \rb)(\xi)
\\ & \le \sup_{|y\s|\le x\s} \lb  (\wt M f)(\xi),|y\s| \rb
= \lb  (\wt Mf)(\xi),x\s \rb
\end{align*}
and the result follows \cite[Proposition 1.4.2]{MN}.
\end{proof}

The next result is well known in the scalar-valued case (see \cite[Chapter 2]{Grafakos1}).
The concise proof presented here was kindly shown to us by Tuomas Hyt\"onen.

\begin{proposition}\label{prop:Rbddkernelcond}
Let $X$ be a monotonically complete Banach lattice.
If $k:\R^d\to \R$ is a measurable function satisfying
\begin{equation*}
  \int_{\R^d}{\rm ess\,sup}_{|\eta|\geq |\xi|}|k(\eta)|\,d \xi\leq 1,
\end{equation*}
then $\|k\|_{L^1(\R^d)}\leq 1$ and for all $f\in L^p(\R^d;X)$, $1\le p\le \infty$,
we have the pointwise estimate $$|k*f|\leq \wt Mf.$$
\end{proposition}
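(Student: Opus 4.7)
The plan is to reduce the assertion to the scalar case and then invoke Proposition \ref{prop:replaceX}. Set $K(\xi) := \esssup_{|\eta|\ge |\xi|}|k(\eta)|$. Since $K$ is radially non-increasing and dominates $|k|$ pointwise a.e., the hypothesis immediately gives $\|k\|_{L^1(\R^d)}\le \|K\|_{L^1(\R^d)}\le 1$, which is the first claim.

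For the pointwise bound, I would first establish it for $X=\R$ and a scalar simple $f$. The layer-cake identity yields $K(\xi) = \int_0^\infty \one_{\{K>t\}}(\xi)\,dt$, and because $K$ is radial and non-increasing, each super-level set $\{K>t\}$ coincides up to a null set with an origin-centred ball $B(0,r(t))$. Fubini then gives
\[
(K*|f|)(x) = \int_0^\infty \int_{B(x,r(t))} |f(y)|\, dy\, dt \le \wt Mf(x) \int_0^\infty |B(0,r(t))|\, dt = \|K\|_{L^1(\R^d)}\,\wt Mf(x).
\]
Combining this with $|k*f|\le |k|*|f|\le K*|f|$ and $\|K\|_{L^1(\R^d)}\le 1$ shows $|k*f|\le \wt Mf$ for every scalar simple $f$; equivalently, $k\in\wt{\mathscr K}$ in the sense of the previous subsection.

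Having verified this membership, Proposition \ref{prop:replaceX} immediately supplies the pointwise estimate $|k*f|\le \wt M|f|$ for every $X$-valued simple function $f$. To extend to general $f\in L^p(\R^d;X)$, I would approximate $f$ by simple functions $f_n\to f$ in $L^p$-norm, use Young's inequality (valid since $k\in L^1$) to get $k*f_n\to k*f$ in $L^p(\R^d;X)$ and, along a subsequence, almost everywhere; the right-hand side is handled by the monotonicity of $\wt M$ and the monotone completeness of $X$, which guarantees that the lattice supremum defining $\wt M|f|$ exists.

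The main obstacle I anticipate is not conceptual but is careful bookkeeping: verifying measurability of $K$, justifying that the super-level sets of a radially non-increasing function are essentially balls, and handling the $p=\infty$ case of the approximation argument in a lattice that only has monotone completeness (so that $\wt M|f|$ lives a priori in an extended lattice). Each of these is routine, but must be assembled so that convolution against the positive radially decreasing majorant $K$ is treated as a monotone operator in the Banach lattice sense.
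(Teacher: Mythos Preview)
Your proposal is correct and follows essentially the same route as the paper: reduce to the scalar case via Proposition~\ref{prop:replaceX}, bound $|k|$ by its radial non-increasing majorant, and decompose that majorant as a superposition of indicator functions of balls so that the maximal function controls each term. The only cosmetic difference is that the paper writes $h(r)=\int_{(r,\infty)}d\mu(t)$ (a Stieltjes representation) where you use the layer-cake identity $K=\int_0^\infty \one_{\{K>t\}}\,dt$; these are two equivalent ways of expressing the same decomposition, and the Fubini computation that follows is identical in spirit. Your additional remarks on extending from simple to general $f\in L^p(\R^d;X)$ are a welcome bit of bookkeeping that the paper leaves implicit.
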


\begin{proof}
By Proposition \ref{prop:replaceX}  it suffices to consider the case $X = \R$.
Put $h(r):=\esssup_{|\xi|\geq r}|k(\xi)|$.
Then $h$ is non-increasing, right-continuous and vanishes at infinity; hence
\begin{equation*}
  h(r)=\int_{(r,\infty)}\,d\mu(t)
\end{equation*}
for a positive measure $\mu$ on $\R_+=(0,\infty)$. Thus
\begin{align*}
  |k*f(\xi)|
  &=\Big|\int_{\R^d}k(\eta)f(\xi-\eta)\,d\eta\Big|
  \leq \int_{\R^d} h(|\eta|)|f(\xi-\eta)|\,d\eta
\\  &=\int_{\R^d} \int_{(|\eta|,\infty)}|f(\xi-\eta)|\,d\mu(t)\,d \eta \\
  &=\int_{\R_+}\int_{B(0,t)}|f(\xi-\eta)|\,d \eta\,d\mu(t)
  \leq\int_{\R_+}|B(0,t)|\wt Mf(\xi)\,d\mu(t).
\end{align*}
Further, writing $S(0,r)$ for the sphere in $\R^d$ of radius $r$ centered at the origin and $|S(0,r)|_{d-1}$ for its
$(d-1)$-dimensional measure, it follows by using polar coordinates that
\begin{align*}
   \int_{\R_+}|B(0,t)|\,d\mu(t)
   &=\int_{\R_+}\int_0^t|S(0,r)|_{d-1}\,d r \,d\mu(t)  \\
   &=\int_0^{\infty}\int_{(r,\infty)} |S(0,r)|_{d-1}\,d\mu(t)\,d r \\
   &=\int_0^{\infty}h(r)|S(0,r)|_{d-1}\,d r=\int_{\R^d}\esssup_{|\eta|\geq|\xi|}|k(\eta)|\,d\xi \leq 1.
\end{align*}
Hence $|k*f(\xi)|\leq \wt M|f|(\xi)$.
\end{proof}

The next result shows that the above sufficient condition holds under a certain integrability
condition on the derivative:

 \begin{proposition}\label{prop:K1}
  If $k\in W^{1,1}_{{\rm loc}}(\R^d\setminus\{0\})$ satisfies
  $\lim_{|\xi|\to \infty}k(\xi) = 0$,
   then
  $$ \int_0^\infty{\rm ess\,sup}_{|\eta|\ge|\xi|}|k(\eta)|\,d\xi
  \ \lesssim_d \ \int_{0}^\infty \rho^d {\rm ess\,sup}_{\xi\in S}|\nabla k(\rho\xi)|\, d\rho,
  $$
  where $S = S(0,1)$ is the unit sphere in $\R^d$. In particular, if the right-hand side is finite
  and $X$ is a monotonically complete Banach lattice,
  then for all $f\in L^p(\R^d;X)$, $1\le p\le \infty$, almost everywhere
  we have the pointwise estimate $$|k*f|\lesssim_d \Big(\int_{0}^\infty \rho^d \sup_{\xi\in S}|\nabla k(\rho\xi)|\, d\rho\Big)\wt M|f|.$$
 \end{proposition}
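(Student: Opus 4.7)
The plan is to reduce the left-hand side to a one-dimensional integral using polar coordinates, then use the fundamental theorem of calculus along rays (which is valid under the $W^{1,1}_{\rm loc}$ hypothesis) to bound $\esssup_{|\eta|\ge r}|k(\eta)|$ by a tail integral of the radial derivative, and finally apply Fubini.

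More precisely, set $h(r):=\esssup_{|\eta|\ge r}|k(\eta)|$ and $g(t):=\esssup_{\omega\in S}|\nabla k(t\omega)|$. Integrating in polar coordinates (note that $d\xi$ should be read as Lebesgue measure on $\R^d$, as in Proposition \ref{prop:Rbddkernelcond}),
\[
\int_{\R^d}\esssup_{|\eta|\ge|\xi|}|k(\eta)|\,d\xi = |S|_{d-1}\int_0^\infty \rho^{d-1} h(\rho)\,d\rho.
\]
Since $k\in W^{1,1}_{\rm loc}(\R^d\setminus\{0\})$, by Fubini on spherical coordinates the function $t\mapsto k(t\omega)$ lies in $W^{1,1}_{\rm loc}(0,\infty)$ for $\sigma$-a.e.\ $\omega\in S$, with derivative $\omega\cdot\nabla k(t\omega)$. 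Combined with $k(\eta)\to0$ at infinity, the fundamental theorem of calculus gives, for a.e.\ such $\eta=|\eta|\omega$,
\[
|k(\eta)| = \Bigl|\int_{|\eta|}^\infty \omega\cdot\nabla k(t\omega)\,dt\Bigr| \le \int_{|\eta|}^\infty g(t)\,dt.
\]
Taking the essential supremum over $|\eta|\ge r$ yields $h(r)\le\int_r^\infty g(t)\,dt$. Substituting and exchanging the order of integration,
\[
\int_0^\infty \rho^{d-1} h(\rho)\,d\rho \le \int_0^\infty \rho^{d-1}\!\int_\rho^\infty g(t)\,dt\,d\rho = \int_0^\infty g(t)\!\int_0^t\rho^{d-1}\,d\rho\,dt = \frac{1}{d}\int_0^\infty t^d g(t)\,dt,
\]
which is the first assertion.

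For the second assertion one applies Proposition \ref{prop:Rbddkernelcond} after rescaling: if $C$ denotes the (finite) right-hand side of the first inequality, then $k/C'$ satisfies the normalising hypothesis of that proposition for $C'=C_d\cdot C$ with $C_d$ the implicit dimensional constant, and hence $|(k/C')*f|\le\wt M|f|$, i.e.\ $|k*f|\lesssim_d \bigl(\int_0^\infty \rho^d g(\rho)\,d\rho\bigr)\,\wt M|f|$. (Alternatively, inspection of the proof of Proposition \ref{prop:Rbddkernelcond} shows directly that its bound is linear in $\int_{\R^d}\esssup_{|\eta|\ge|\xi|}|k(\eta)|\,d\xi$, so no separate scaling argument is needed.) The main technical point is the justification of the radial fundamental theorem of calculus from the $W^{1,1}_{\rm loc}$ assumption off the origin; everything else is polar coordinates and Fubini.
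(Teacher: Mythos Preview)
Your proof is correct and follows essentially the same route as the paper's: both use the fundamental theorem of calculus along rays to bound $|k(\eta)|$ by a tail integral of $|\nabla k|$, then pass to polar coordinates and apply Fubini. The only difference is organisational: you convert to the radial integral $\int_0^\infty \rho^{d-1}h(\rho)\,d\rho$ first and then Fubini directly, whereas the paper keeps the $\R^d$-integral and uses an indicator trick $\one_{(0,\rho)}(r)$ before evaluating $\int_{\R^d}\one_{|\xi|<\rho}\,d\xi\eqsim_d\rho^d$; your version is arguably cleaner and even yields the explicit constant $|S|_{d-1}/d$.
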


\begin{proof} Using polar coordinates,
\begin{align*}
 \int_{\R^d}{\rm ess\,sup}_{|\eta|\ge |\xi|}|k(\eta)|\,d\xi
 & =  \int_{\R^d} {\rm ess\,sup}_{\eta\in S} \sup_{r\ge |\xi|}|k(r\eta)|\,d\xi
 \\ & \le  \int_{\R^d} {\rm ess\,sup}_{\eta\in S} \sup_{r\ge |\xi|}\int_r^\infty |\nabla k(\rho\eta)|\,d\rho  \,d\xi
\\ & =\int_{\R^d} {\rm ess\,sup}_{\eta\in S} \sup_{r\ge |\xi|}\int_0^\infty \one_{(0,\rho)}(r)|\nabla k(\rho\eta)|\,d\rho  \,d\xi
\\ & \le  \int_0^\infty\int_{\R^d} {\rm ess\,sup}_{\eta\in S} \sup_{r\ge |\xi|} \one_{(0,\rho)}(r)|\nabla k(\rho\eta)|\,d\xi\, d\rho
\\ & \eqsim_d \int_{0}^\infty \rho^d {\rm ess\,sup}_{\eta\in S}|\nabla k(\rho\eta)|\, d\rho.
\end{align*}
Therefore, the result follows from Proposition \ref{prop:Rbddkernelcond}.
\end{proof}

Recall the definition $$ \wt{\mathscr{K}} := \{k\in L^1(\R^d): \ |k*f|\le \wt M|f|
\hbox{ a.e. for all simple $f:\R^d\to\R$}\}.$$
For a kernel $k\in L^1(\R^d)$ we denote by $T_k$ the associated convolution operator $f\mapsto k*f$
on $L^p(\R^d;X)$.

If $X$ is a UMD Banach function space and $s\in (1, \infty)$, then $X(\ell^s)$ is a
UMD Banach function space again  (see \cite[p.\ 214]{RF}). This implies that the
family
$\{T_k: k\in \wt{\mathscr{K}}\}$ is $\ell^{s}$-bounded. Indeed, using
\eqref{eq:equivRs}, for all finite sequences $(k_n)_{n=1}^N$ in $\wt{\mathscr{K}}$
and $(f_n)_{n=1}^N$ in $X(\ell^s_N)$ we have
\begin{align*}
\|(k_n* f_n)_{n=1}^N\|_{L^p(\R^d;X(\ell^s_N))}
& \leq \|(\wt M f_n)_{n=1}^N\|_{L^p(\R^d;X(\ell^s_N))}
\\ & \leq C_{X,s}  \|(f_n)_{n=1}^N\|_{L^p(\R^d;X(\ell^s_N))},
\end{align*}
where we applied Proposition \ref{prop:UMD} to $X(\ell^s_N)$. A similar but
simpler argument give that this result extends to $s=\infty$.

For $s=1$ this argument does not work since the maximal function is not bounded on $\ell^1$.
Surprisingly, we can still obtain the following result for $s=1$, which is the
main result of this section.
\begin{theorem}\label{thm:Xlq}
Let $X$ be a Banach lattice, let $p\in (1,\infty)$, and consider the family of convolution operators
$\wt{\mathscr{T}} = \{T_k: \ k\in \wt{\mathscr{K}}\}$ on $L^p(\R^d;X)$.
\begin{enumerate}[\rm(1)]
 \item If $X\s$ is an HL lattice, then
$\wt{\mathscr{T}}$ is $\ell^{1}$-bounded on $L^p(\R^d;X)$.
 \item If $X$ is a UMD Banach function space, then $\wt{\mathscr{T}}$ is $\ell^{s}$-bounded on
$L^p(\R^d;X)$ for all $s\in [1,\infty]$.
\end{enumerate}
\end{theorem}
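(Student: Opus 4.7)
The plan is to combine the pointwise bound $|k*f|\le\wt M|f|$ of Proposition \ref{prop:replaceX} with the $L^p$-boundedness of $\wt M$ supplied by the HL hypothesis. The only genuine obstacle is that in part (1) the exponent $s=1$ corresponds to an $\ell^1$-type vector norm on which $\wt M$ is unbounded; this is overcome by dualising to $\ell^\infty$.

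\textbf{Part (2).} For $s\in(1,\infty)$, the UMD property of the Banach function space $X$ passes to $X(\ell^s_N)$ (a classical theorem of Rubio de Francia, see \cite{RF}), which is therefore HL by Proposition \ref{prop:UMD}. Given $(k_n)_{n=1}^N\subseteq\wt{\mathscr{K}}$ and $(f_n)_{n=1}^N\subseteq L^p(\R^d;X)$, Proposition \ref{prop:replaceX} applied coordinatewise yields $|k_n*f_n|\le\wt M|f_n|$ in $X$ for each $n$. Since the lattice operations on $X(\ell^s_N)$ are coordinatewise, these inequalities combine into a pointwise bound in $X(\ell^s_N)$ that is dominated by $\wt M F$, where $F(\xi):=(f_n(\xi))_n$ is viewed as an $X(\ell^s_N)$-valued function; the $L^p(\R^d;X(\ell^s_N))$-boundedness of $\wt M$ then finishes this case. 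The case $s=\infty$ uses only that $X$ itself is HL, together with the monotonicity bound $\sup_n\wt M|f_n|\le\wt M\sup_n|f_n|$. The case $s=1$ is absorbed into part (1), since both $X$ and $X^*$ are HL when $X$ is a UMD Banach function space.

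\textbf{Part (1).} By Proposition \ref{prop:suff}(2), it suffices to prove that the adjoint family $\wt{\mathscr{T}}^*$ is $\ell^\infty$-bounded on $L^{p'}(\R^d;X^*)$. A direct duality computation identifies $T_k^*$ with $T_{\check k}$, where $\check k(x):=k(-x)$, and reflection symmetry of $\wt M$ shows $\check k\in\wt{\mathscr{K}}$ whenever $k\in\wt{\mathscr{K}}$. For finite sequences $(k_n)\subseteq\wt{\mathscr{K}}$ and $(g_n)\subseteq L^{p'}(\R^d;X^*)$, Proposition \ref{prop:replaceX} applied in $X^*$ gives $|T_{\check k_n}g_n|\le\wt M|g_n|$ pointwise, so by monotonicity of $\wt M$,
\begin{equation*}
\sup_{1\le n\le N}|T_{\check k_n}g_n| \le \sup_{1\le n\le N}\wt M|g_n| \le \wt M\Bigl(\sup_{1\le n\le N}|g_n|\Bigr).
\end{equation*}
Every dual Banach lattice is monotonically complete, so $\wt M$ is well-defined on $L^{p'}(\R^d;X^*)$ and its HL-boundedness there converts the last display into the required $\ell^\infty$-bound.

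The central point is the duality pivot in part (1): passing from $\ell^1$ to $\ell^\infty$ converts the obstruction (unboundedness of $\wt M$ on $\ell^1$-valued spaces) into a trivial application of monotonicity. The auxiliary verifications that $T_k^*=T_{\check k}$, that $\check k\in\wt{\mathscr{K}}$, and that $X^*$ is monotonically complete are all routine. No analogous trick is available for intermediate $s\in(1,\infty)$, which is precisely why the stronger UMD hypothesis is imposed in part (2) to ensure the HL property of $X(\ell^s_N)$.
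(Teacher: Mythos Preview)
Your proof is correct and follows essentially the same route as the paper: the duality pivot via Proposition~\ref{prop:suff}(2) to reduce $\ell^1$-boundedness on $L^p(\R^d;X)$ to $\ell^\infty$-boundedness on $L^{p'}(\R^d;X^*)$, the identification $T_k^* = T_{\check k}\in\wt{\mathscr{T}}$, and the maximal-function estimate on the dual side are exactly the paper's argument (your inline computation replaces its citation of Proposition~\ref{prop:Xlinfty}). For part~(2) the paper likewise uses the UMD property of $X(\ell^s_N)$ for $s\in(1,\infty)$ and additionally notes that one can alternatively interpolate between the $\ell^1$- and $\ell^\infty$-bounds via Proposition~\ref{prop:suff}(3), but this is only a variant, not a different strategy.
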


\begin{remark}
It is crucial that the case $s=1$ is included here, i.e., the set
$\mathscr{T}$ is $\ell^{1}$-bounded on each $L^p(\R;X)$.
This fact will be needed in the proof of
our main result about $R$-boundedness of stochastic convolution operators (Theorem \ref{thm:Rbddstoch} below).
\end{remark}

Before turning to the proof of the theorem we start with some preparations and motivating results.
The next proposition shows that in the case of Banach function spaces, in a certain sense $\ell^{s}$-boundedness
of operator families becomes more restrictive as $s$ decreases.
\begin{proposition}
Let $X$ be a Banach function space. Let $1\leq s<t<\infty$ and $p\in (1, \infty)$ and $q = p t/s$. Let $\mathscr{T}_+ = \{T_k:k\in \mathscr{K}, k\geq 0\}$. If $\mathscr{T}_+$ is $\ell^{s}$-bounded on $L^p(\R^d;X)$, then $\mathscr{T}_+$ is $\ell^t$-bounded on $L^q(\R^d;X)$.
\end{proposition}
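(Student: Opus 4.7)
The plan is to combine a Jensen-type pointwise estimate with a Calderón--Lozanovsky interpolation argument between the hypothesis and a trivial endpoint. Since Lemma~\ref{lem:L1est} gives $\|k\|_1 \le 1$ for every $k \in \mathscr{K}$, for nonnegative $k$ the measure $k(\xi - \cdot)\,d\eta$ is sub-probability, and Jensen's inequality applied to $\phi(x) = x^{t/s}$ yields the pointwise bound $(T_k g)^{t/s} \le T_k(g^{t/s})$ for every $g \ge 0$. Raising to the $s$-th power and setting $f_n := g_n^{t/s}$ gives the key pointwise estimate in $\R^d$ and in the Banach lattice $X$:
\[
\sum_n (T_{k_n} g_n)^t \;\le\; \sum_n (T_{k_n} f_n)^s.
\]

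Next, view the amplified positive operator $\mathbf{T}\colon (h_n)_{n=1}^N \mapsto (T_{k_n} h_n)_{n=1}^N$ as acting between Banach function spaces on $\R^d \times S_X \times \{1,\dots,N\}$. The hypothesis gives $\mathbf{T}$ bounded on $L^p(\R^d; X(\ell^s_N))$ with norm $\le C := R^s(\mathscr{T}_+)$, uniformly in $N$. The defining property $|k_n * f| \le \wt M|f|$ of $\mathscr{K}$ together with the $L^\infty$-contractivity of $\wt M$ gives a uniform bound for $\mathbf{T}$ on $L^\infty(\R^d; X(\ell^\infty_N))$. Calderón's interpolation theorem for positive linear operators applied to the Calderón--Lozanovsky product
\[
\bigl(L^p(X(\ell^s_N))\bigr)^{1-\theta}\,\bigl(L^\infty(X(\ell^\infty_N))\bigr)^\theta \;=\; L^{p_\theta}\bigl(X(\ell^{s_\theta}_N)\bigr),
\]
with $1/p_\theta = (1-\theta)/p$ and $1/s_\theta = (1-\theta)/s$, then yields boundedness on this product. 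Choosing $\theta := (t-s)/t$ gives $p_\theta = pt/s = q$ and $s_\theta = t$, so $\mathbf{T}$ is bounded on $L^q(X(\ell^t_N))$ with norm $\le C^{s/t}$, which is exactly the $\ell^t$-boundedness of $\mathscr{T}_+$ on $L^q(\R^d;X)$ we wanted.

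The main obstacle I anticipate is verifying the $L^\infty$-endpoint uniformly in $N$ for a general Banach function space $X$: one must control the lattice supremum $\sup_n |T_{k_n} f_n|$ in $X$, which requires the maximal bound $\sup_n |T_{k_n} f_n| \le \wt M(\sup_m |f_m|)$ (coordinate-wise) together with a structural property of $X$ such as monotone completeness or the weak Fatou property. The compatibility of Calderón--Lozanovsky products with the mixed $L^p(X(\ell^s))$ construction in the last step is standard but must be invoked carefully.
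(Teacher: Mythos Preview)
Your Jensen step is exactly the paper's key idea, and in fact it is all that is needed; the Calder\'on--Lozanovsky interpolation you set up afterwards is a detour that never actually uses the Jensen inequality you derived (so nothing is ``combined''), and it runs into the very obstruction you anticipate.

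The paper's argument is completely direct. Having established $|k_n*f_n|^t \le (k_n*|f_n|^{t/s})^s$ pointwise, it simply applies the $\ell^s$-boundedness hypothesis to the functions $g_n := |f_n|^{t/s}$ and unwinds the exponents using $q = pt/s$:
\[
\|(k_n*f_n)_n\|_{L^q(X(\ell^t))}
\le \|(k_n*g_n)_n\|_{L^p(X(\ell^s))}^{s/t}
\le C\,\|(g_n)_n\|_{L^p(X(\ell^s))}^{s/t}
= C\,\|(f_n)_n\|_{L^q(X(\ell^t))},
\]
since $(\sum_n g_n^s)^{1/s} = (\sum_n |f_n|^t)^{1/s}$. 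No second endpoint, no interpolation theorem, no structural hypothesis on $X$ beyond being a function space is invoked.

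Your worry about the $L^\infty(X(\ell^\infty_N))$ endpoint is well founded and is a genuine gap, not a technicality. The bound you would need is $\esssup_\xi \|\sup_n |T_{k_n} f_n(\xi)|\|_X \le \esssup_\xi \|\sup_n |f_n(\xi)|\|_X$, and this fails already for $X=\ell^1$: the pointwise lattice supremum over $\xi\in\R^d$ does not commute with the $X$-norm, so even though each $T_{k_n}$ is a sub-probability average, the diagonal operator $\mathbf{T}$ need not be contractive on $L^\infty(\R^d;X(\ell^\infty_N))$. Thus the interpolation route is blocked at one endpoint, while the Jensen computation you already wrote down gives the result in a single line once you feed $g_n=|f_n|^{t/s}$ into the hypothesis.
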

\begin{proof}
Let $0\le k_1, \dots k_N\in \mathscr{K}$ be non-negative kernels
and let $f_1, \dots, f_N:\R^d\to X$ be simple functions. By Lemma \ref{lem:L1est} we have
$\|k_n\|_{L^1(\R^d)}\leq 1$, and hence Jensen's inequality implies
$|k_n*f_n|^t\leq |k_n*(|f_n|^{t/s})|^s$. Therefore,
\begin{align*}
\|(|k_n*f_n|)_{n=1}^N \|_{L^q(X(\ell^t))} & \leq \|(|k_n*(f_n|^{t/s}))_{n=1}^N \|_{L^p(X(\ell^s))}^{s/t} \\ & \leq C \|(f_n|^{t/s})_{n=1}^N \|_{L^p(X(\ell^s))}^{s/t}  = C\|(f_n)_{n=1}^N \|_{L^q(X(\ell^t))}.
\end{align*}
\end{proof}

The next proposition gives necessary and sufficient conditions for $\ell^{\infty}$-bounded\-ness in terms
of $L^p$-boundedness of the maximal function $\widetilde M$.

\begin{proposition}\label{prop:Xlinfty}
Let $X$ be a Banach lattice, let $p\in [1, \infty]$ and consider the family of convolution operators $\wt{\mathscr{T}} = \{T_k: \ k\in \wt{\mathscr{K}}\}$ on $L^p(\R^d;X)$.
\begin{enumerate}[\rm(1)]
\item If $\wt{\mathscr{T}}$ is $\ell^{\infty}$-bounded on $L^p(\R^d;X)$, then $\widetilde{M}$ is $L^p(\R^d;X)$-bounded;
\item If $\widetilde{M}$ is $L^p(\R^d;X)$-bounded and $X$ is monotonically complete, then
 $\wt{\mathscr{T}}$ is $\ell^{\infty}$-bounded on $L^p(\R^d;X)$.
\end{enumerate}
\end{proposition}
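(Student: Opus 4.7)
For (1), the plan is to substitute normalised ball averages into the $\ell^{\infty}$-bound. For $r>0$ set $k_r(\xi):=|B(0,r)|^{-1}\one_{B(0,r)}(\xi)$; this kernel is radially decreasing with $\|k_r\|_{L^1(\R^d)}=1$, so Proposition \ref{prop:Rbddkernelcond} places $k_r$ in $\wt{\mathscr K}$, and $T_{k_r}g$ is simply the ball average $A_r g$. Given a finite set $J=\{r_1,\ldots,r_N\}\subseteq\R_+$ and $f\in L^p(\R^d;X)$, apply the $\ell^{\infty}$-boundedness hypothesis to the $N$-tuples $(T_{k_{r_n}})_{n=1}^N$ and $(|f|)_{n=1}^N$. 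Since $A_{r_n}|f|$ is already non-negative in the lattice sense, the inner modulus is redundant and one obtains
\[
\|\wt M_J f\|_{L^p(\R^d;X)} = \Big\|\sup_{1\le n\le N} A_{r_n}|f|\Big\|_{L^p(\R^d;X)} \le R^{\infty}(\wt{\mathscr T})\,\|f\|_{L^p(\R^d;X)},
\]
uniformly in finite $J$; this is exactly the HL property for $X$. If $X$ is moreover monotonically complete, the weak Fatou property upgrades the estimate to a bound on the full maximal function $\wt M f$ by taking $J_k\uparrow\R_+$ along a countable dense set.

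For (2), the plan is to dominate every $|k_n*f_n|$ by the maximal function of a single dominating vector. Because $X$ is monotonically complete, Proposition \ref{prop:replaceX} applies, giving first for simple $X$-valued $f_n$ and then by $L^p$-density,
\[
|(k_n*f_n)(\xi)| \le \wt M|f_n|(\xi) \le \wt M\Bigl(\sup_{1\le m\le N}|f_m|\Bigr)(\xi) \quad \text{for a.e.\ } \xi\in\R^d,
\]
where the second inequality uses monotonicity of $\wt M$ (since $|f_n|\le \sup_m|f_m|$ in the lattice order). Taking the lattice supremum over $n$ on the left, and then the $L^p(\R^d;X)$-norm on both sides, yields
\[
\Big\|\sup_{1\le n\le N}|k_n*f_n|\Big\|_{L^p(\R^d;X)} \le \|\wt M\|_{\calL(L^p(\R^d;X))} \Big\|\sup_{1\le m\le N}|f_m|\Big\|_{L^p(\R^d;X)},
\]
which is precisely the $\ell^{\infty}$-bound for $\wt{\mathscr T}$.

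I do not anticipate any serious obstacle. The main bookkeeping points are: in (1), checking that normalised ball averages actually lie in $\wt{\mathscr K}$ (handled by Proposition \ref{prop:Rbddkernelcond}) and, if one wants a statement about $\wt M$ rather than just the HL property, invoking the weak Fatou property to pass from finite $J$ to $\R_+$; in (2), extending the pointwise domination of Proposition \ref{prop:replaceX} from simple functions to arbitrary $f_n\in L^p(\R^d;X)$ by approximation, which is exactly where monotonic completeness is needed.
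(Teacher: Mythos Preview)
Your proof is correct and follows essentially the same route as the paper's. The only cosmetic differences are that the paper verifies $k_r\in\wt{\mathscr K}$ by a direct one-line computation rather than invoking Proposition~\ref{prop:Rbddkernelcond}, and that your remark about the weak Fatou property is not strictly needed here since, by the paper's convention following Definition~\ref{def:HL}, ``$\wt M$ is $L^p(\R^d;X)$-bounded'' is shorthand for the uniform bound on the finite-index maximal functions $\wt M_J$.
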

Although the proof below also works for $p=1$, the maximal function is of course not bounded on $L^1(\R^d;X)$ (see \cite{Grafakos1}). As a consequence we see that $\wt{\mathscr{T}}$ is not $\ell^{\infty}$-bounded on $L^1(\R;X)$.
\begin{proof}
(1): \
For all $r>0$ and simple $f:\R^d\to \R$
we have
\begin{align*} \frac1{|B_0(r)|}|\one_{B_0(r)}* f(\xi)|
& = \frac1{|B_0(r)|}\Big|\int_{\R^d} \one_{B_0(r)}(\xi-\eta)f(\eta)\,d\eta\Big|
\\ & = \frac1{|B_\xi(r)|} \Big|\int_{\R^d} \one_{B_\xi(r)}(\eta)f(\eta)\,d\eta\Big|.
\end{align*}
It follows that the functions $k_r := \frac1{|B_0(r)|}\one_{B_0(r)} $ belong to
$\wt{\mathscr{K}}$ for all $r>0$. Moreover, the above identities extend to functions
$f\in L^p(\R^d;X)$ provided we interpret $|\cdot|$ as the modulus in $X$.
As a consequence, for all $f\in L^p(\R^d;X)$ and all finite sets $J\subseteq\R_+$,
the $\ell^{\infty}$-boundedness of  $\mathscr{T}$ on $L^p(\R^d;X)$ implies
\begin{align*}
\n \widetilde{M}_Jf\n_{L^p(\R^d;X)}
 = \Big\n \sup_{r\in J} |T_{k_r} f|\Big\n_{L^p(\R^d;X)}
 \lesssim_{p,d,X} \n f\n_{L^p(\R^d;X)}.
\end{align*}
It follows that the mappings $\widetilde{M}_J$ are bounded on $L^p(\R^d;X)$,
uniformly with respect to $J$.
\smallskip

(2): \
Let $k_1, \ldots, k_N\in \wt{\mathscr{K}}$ and simple $f_1,\dots,f_N\in L^p(\R^d;X)$ be given.
Then, by Proposition \ref{prop:replaceX},
\begin{align*}
\int_{\R^d} \Big\| \sup_{1\leq n\leq N} |T_{k_n} f_n(\xi)| \Big\| ^p \, d\xi & \leq
\int_{\R^d} \Big\| \sup_{1\leq n\leq N} (\widetilde{M} |f_n|)(\xi)| \Big\| ^p \, d\xi
\\ & \leq \int_{\R^d} \Big\| \widetilde{M} (\sup_{1\leq n\leq N} |f_n|)(\xi) \Big\| ^p \, d\xi
\\ & \lesssim_{p,d,X}\int_{\R^d} \Big\| \sup_{1\leq n\leq N} |f_n(\xi)| \Big\| ^p \, d\xi,
\end{align*}
with the obvious modifications for $p=\infty$. By approximation, this estimate extends to
general $f_1,\dots,f_N\in L^p(\R^d;X)$.
\end{proof}

\begin{proof}[Proof of Theorem \ref{thm:Xlq}]
Fix $1<p<\infty$.
We begin by observing that for $k\in L^1(\R^d)$ the adjoint of $T_k$ as an operator on $L^p(\R^d)$
equals $T_{\bar{k}}$ as an operator on $L^{p'}(\R^d)$, $\frac1p+\frac1{p'}=1$ and $\bar{k}(x) = k(-x)$.
Clearly, $T_{\bar{k}}\in \mathscr{T}$.

By definition (if $X\s$ is HL), respectively by Proposition \ref{prop:UMD}
(if $X$ is UMD), $\widetilde{M}$ is
$L^{p'}(\R^d;X^*)$-bounded.
Therefore, by Proposition \ref{prop:Xlinfty},
$\mathscr{T}$
is $\ell^{\infty}$-bounded on $L^{p'}(\R^d;X^*)$, and Proposition \ref{prop:suff}(2) then
shows that $\mathscr{T}$
is $\ell^{1}$-bounded on $L^{p}(\R^d;X)$.

If $X$ is UMD, we have already sketched a proof in the case $s\in (1, \infty]$ (
alternatively we can use interpolation). We may apply the above argument to $p'$ and $X\s$
as well and obtain that $\mathscr{T}$
is also $\ell^{\infty}$-bounded on $L^{p}(\R^d;X)$. Now the result follows from
Proposition \ref{prop:suff}(3). Here we used that a UMD space $X$ is reflexive and thus
$L^p(\R^d;X)$ is reflexive (see \cite{DU}) and hence has order continuous norm.
\end{proof}

\begin{remark}
If a family of kernels $\mathscr{K}=\{k:\R^d\to \R\}$ satisfies an appropriate smoothness condition,
then the $\ell^{s}$-boundedness of $\{T_k:k\in \mathscr{K}\}$ as a family of operators on
$L^{p_0}(\R^d;X)$ for a certain $p_0\in [1, \infty]$ implies the $\ell^{s}$-boundedness on $L^{p}(\R^d;X)$ for
all $p\in(1, \infty)$ (see \cite[Theorem V.3.4]{GCRdF}). This result is interesting
from a theoretical point of view, but in all applications considered here we can consider
arbitrary $p\in (1, \infty)$ from the beginning without additional difficulty. The main
reason for this is the $p$-independence of the HL property.
\end{remark}

The next example shows that Theorem \ref{thm:Xlq} does not extend to $p=1$.

\begin{example}\label{ex:nontriviallity}
Let $X = \ell^r$ with $r\in (1, \infty)$ fixed. By Theorem \ref{thm:Xlq},
the family $\mathscr{T}_1$ considered there is $\ell^{s}$-bounded
on $L^p(\R;\ell^r)$ for all $p\in(1,\infty)$ and $s\in [1, \infty]$.
We show that it fails to be $\ell^{s}$-bounded on $L^1(\R;\ell^r)$ for all $s\in [1,\infty]$.

Let $\lambda_n>0$ with $\lambda_n\to \infty$ as $n\to \infty$. Let $k_n(t)  =
\frac12\lambda_n e^{-\lambda_n |t|}$. By Proposition \ref{prop:Rbddkernelcond} we have $(k_n)_{n\ge 1}\subseteq \wt{\mathscr{K}}$
and$\{T_{k_n}: n\geq 1 \}\subseteq \wt{\mathscr{T}}$. The kernels $(k_n^2)_{n\geq 1}$, are precisely the ones which are needed in \cite[Section 7]{NVW13}.

Fix $s\in [1,\infty]$. We will
show that $\{T_{k_n}: n\geq 1 \}$ is not
$\ell^{s}$-bounded as a family of operators on $L^1(\R;\ell^r)$.
Indeed, assume it is $\ell^{s}$-bounded on this space with constant $C$. Then,
letting $N\to \infty$ in the definition of $\ell^{s}$-boundedness and using
the identification $(L^1(\R;\ell^r))(\ell^s) = L^1(\R;\ell^r(\ell^s))$
(see \eqref{eq:Rs-LpX}), we obtain
\begin{align*}
\int_{\R} \Big(\sum_{j\geq 1} \Big[\Big(\sum_{n\geq 1} \Big|\int_{\R} k_n(u)
f_{nj}(t-u)\, &du\Big|^s \Big)^{1/s}\Big]^{r}\Big)^{1/r} dt
\\ & \leq
C\int_{\R} \Big(\sum_{j\geq 1} \Big[\Big(\sum_{n\geq 1}
|f_{nj}(t)|^s\Big)^{1/s} \Big]^{r}\Big)^{1/r} dt,
\end{align*}
where $(f_{nj})_{n,j\geq 1}$ is in $L^1(\R;\ell^r(\ell^s))$; we make the obvious modifications
if $s=\infty$.
Taking $f_{nj}(t) = f_j(t) \delta_{nj}$ yields
\begin{align*}
\int_{\R} \Big(\sum_{j\geq 1} \Big|\int_{\R} k_j(u) f_{j}(t-u)\, du\Big|^{r}
\Big)^{1/r} \, dt \leq C\int_{\R} \Big(\sum_{j\geq 1}
|f_{j}(t)|^{r}\Big)^{1/r} dt.
\end{align*}
The latter is easily seen
to be equivalent to the maximal $L^1$-regularity of the diagonal
operator $A e_j = \lambda_j e_j$ on $\ell^r$, which does not hold by
\cite{Guerre}.
\end{example}

\section{The operators $N_k$}

Let $X$ be a Banach space and $H$ be a Hilbert space.
For $k\in L^2(\R^d)$ and simple functions $G:\R^d\to H\otimes X$ we define
the function $N_k G: \R^d\to L^2(\R^d)\otimes H\otimes  X$ by
\begin{equation}\label{eq:Rkdef}
((N_k G)(t))(s) = k(t-s) G(s), \ \ s,t\in \R^d.
\end{equation}

\begin{proposition}\label{prop:Rk-type2}
Let $X$ be a non-zero Banach space, $H$ a non-zero Hilbert space,
and let  $p\in [1,\infty)$ and $d\ge 1$ be arbitrary and fixed.
The following assertions are equivalent:

\begin{enumerate}[\rm(1)]
\item  $X$ has type $2$ and $p\in [2,\infty)$;
\item For all $k\in L^2(\R^d)$ the operator $G\mapsto N_kG$ extends to a bounded operator
$$N_k: L^p(\R^d;\gamma(H,X)) \to L^p(\R^d,dt;\g(L^2(\R^d;H,ds),X)).$$
\end{enumerate}
\end{proposition}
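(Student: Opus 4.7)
The plan is to prove the two implications separately, with Proposition \ref{prop:L2gamma}(1) as the central tool for sufficiency and a carefully weighted test function $G$ for the necessity of type $2$.

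For $(1)\Rightarrow(2)$ I would work pointwise in $t$: for simple $G$, the map $s\mapsto k(t-s)G(s)$ lies in $L^2(\R^d;\g(H,X))$ with squared norm $(|k|^2 * g)(t)$, where $g(s) := \|G(s)\|_{\g(H,X)}^2$. The type $2$ embedding from Proposition \ref{prop:L2gamma}(1) then identifies this function with $(N_kG)(t)$ and gives $\|(N_kG)(t)\|_{\g(L^2(\R^d;H),X)} \le T_2(X)(|k|^2 * g)^{1/2}(t)$. Taking $L^p$-norms in $t$ and invoking Young's inequality $L^1 * L^{p/2}\embed L^{p/2}$ (valid since $p \ge 2$), with $\||k|^2\|_{L^1} = \|k\|_{L^2}^2$ and $\|g\|_{L^{p/2}} = \|G\|_{L^p(\g(H,X))}^2$, yields $\|N_kG\|_{L^p(\g)}\le T_2(X)\|k\|_{L^2}\|G\|_{L^p(\g(H,X))}$, and density of simple functions extends $N_k$.

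For $(2)\Rightarrow(1)$, I would first establish $p\ge 2$ via a scalar test: take $X=H=\R$, $d=1$, $k=\one_{[0,1]}$ and $g_R := R^{1/2}\one_{[0,1/R]}$; direct calculation gives $\|g_R\|_{L^p} = R^{1/2-1/p}$ while $(|k|^2 * g_R^2)(t) \equiv 1$ on $[1/R,1]$, so $\|N_k g_R\|_{L^p(L^2)}\ge (1-1/R)^{1/p}$ stays bounded below, forcing $p\ge 2$ as $R\to\infty$. For type $2$, I would upgrade the hypothesis to a uniform bound $\|N_k\|\le C_0\|k\|_{L^2}$ by applying the closed graph theorem to $k\mapsto N_kG$ for fixed simple $G$, followed by uniform boundedness. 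Given $x_1,\dots,x_N\in X$, I would set $a_j := \|x_j\|^2$, $A := \sum_j a_j$, pick disjoint $A_j\subseteq [0,A]$ with $|A_j|=a_j$ and an orthonormal $(e_j)$ in $H$, and test with $G(s) := \sum_j a_j^{-1/2}\one_{A_j}(s)(e_j\otimes x_j)$. A direct computation shows $\|G\|_{L^p(\g)}^p = \sum_j a_j^{1-p/2}\|x_j\|^p = \sum_j\|x_j\|^2$. Choosing $k := R^{-1/2}\one_{[0,R]}$ with $R := 2A$, the quantities $a_j^{-1}\int_{A_j}|k(t-s)|^2\,ds$ reduce to $R^{-1}$ uniformly in $j$ for $t\in [A,R]$, and the orthogonality of the functions $\{(k(t-\cdot)\one_{A_j}/\sqrt{a_j})\otimes e_j\}$ in $L^2(\R;H)$ identifies $(N_kG)(t)$ as a rank-$N$ operator of $\g$-norm $R^{-1/2}(\E\|\sum_j\gamma_j x_j\|^2)^{1/2}$. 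Integrating over $[A,R]$ yields $\|N_kG\|_{L^p(\g)}\gtrsim A^{1/p-1/2}(\E\|\sum_j\gamma_j x_j\|^2)^{1/2}$; matched against $C_0(\sum_j\|x_j\|^2)^{1/p}$ on the right and using $A^{1/p-1/2} = (\sum_j\|x_j\|^2)^{1/p-1/2}$, the powers of $\sum_j\|x_j\|^2$ collapse to give $(\E\|\sum_j\gamma_j x_j\|^2)^{1/2}\lesssim (\sum_j\|x_j\|^2)^{1/2}$, i.e., type $2$.

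The main obstacle is the type $2$ derivation when $p>2$: a naive test with equal weights $|A_j|=1$ produces a $(\sum_j\|x_j\|^p)^{1/p}$ on the right and thus only recovers type $p$. Choosing $a_j\propto\|x_j\|^2$ is the key trick that converts the $L^p(\g)$-norm of $G$ back into a power of $\sum_j\|x_j\|^2$; the geometric factor $A^{1/p-1/2}$ arising from integration in $t$ over a set of length $\asymp A$ then exactly balances the $(\sum_j\|x_j\|^2)^{1/p-1/2}$ coming from $A = \sum_j\|x_j\|^2$, so that the $x_j$-dependent factors recombine into the type $2$ inequality independently of $p$.
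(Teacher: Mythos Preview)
Your argument for $(1)\Rightarrow(2)$ is essentially identical to the paper's: both use the type~2 embedding of Proposition~\ref{prop:L2gamma}(1) followed by Young's inequality $L^1*L^{p/2}\embed L^{p/2}$. The scalar test for $p\ge 2$ is also the same idea (the paper lets the support of $G$ shrink rather than using the normalisation $R^{1/2}$, but the scaling is equivalent).

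For the type~2 half of $(2)\Rightarrow(1)$ you take a genuinely different route. The paper fixes a single kernel $k=\one_{(a,b)^d}$ and observes that boundedness of $N_k$ restricted to functions supported in a small cube $I$ forces the identity map $L^p(I;X)\to\gamma(L^2(I),X)$ to be bounded; type~2 then follows by citing \cite[Proposition 6.1]{RS}. Your approach is more hands-on: you vary $k$ over the scaled family $R^{-1/2}\one_{[0,R]}$ and engineer a test function with weights $a_j=\n x_j\n^2$ so that the $p$-dependence cancels and the Gaussian type~2 inequality drops out directly. This is self-contained (no appeal to \cite{RS}) but costs you the additional uniform bound $\sup_{\n k\n_2=1}\n N_k\n<\infty$, for which your closed-graph/uniform-boundedness sketch is correct.

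Two small points. First, $d$ is fixed in the statement, so you cannot set $d=1$; your constructions need to be carried out in $\R^d$ (product cubes and slabs do the job with only cosmetic changes). Second, the orthonormal system $(e_j)$ in $H$ is superfluous and in fact problematic if $\dim H<N$: the disjointness of the $A_j$ already makes the functions $k(t-\cdot)\one_{A_j}$ orthogonal in $L^2(\R^d)$, so you may take all $e_j$ equal to a single unit vector $h_0$ and the $\gamma$-norm computation goes through unchanged.
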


\begin{proof}
(1)$\Rightarrow$(2):
It suffices to prove that
for any Banach space $Y$ with type $2$
the mapping $G\mapsto N_kG$ extends
to a bounded operator  $$N_k: L^p(\R^d;Y)\to L^p(\R^d,dt;L^2(\R^d,ds;Y)).$$
Indeed, once this has been shows we take $Y = \gamma(H,X)$ (which has type $2$
if $X$ has type $2$) and apply Proposition \ref{prop:L2gamma}(1).

Fix $p\ge 2$ and $f\in L^p(\R^d;Y)$. By Young's inequality,
\begin{align*}
\n N_k f\n_{L^p(\R^d,dt;L^2(\R^d,ds;Y))}^p
 &  = \int_{\R^d} \Big(\int_{\R^d} |k(t-s)|^2 \n f(s)\n ^2 \, ds\Big)^{p/2} \, dt
\\ & = \big\n |k|^2 * \n f\n ^2 \big\n_{L^{p/2}(\R^d)}^{p/2}
\\ & \le \|k^2\|_{L^1(\R^d)}^{p/2} \big\n  \n f\n ^2 \big\n_{L^{p/2}(\R^d)}^{p/2}
= \|k\|_{L^2(\R^d)}^{p}  \n f\n_{L^{p}(\R^d;Y)}^{p}.
\end{align*}

(2)$\Rightarrow$(1): To show that $p\in [2, \infty)$ it suffices to argue on one-dimensional
subspaces of $X$. We may therefore assume that $X=H=\R$ and therefore $\g(L^2(\R^d);X) = L^2(\R^d)$.

Let $k = \one_{(a,b)^d}$ with $a<b$ and set $\delta := b-a$.
For $0<r<\delta/2$ let $G_r := \one_{(0,r)^d}$. Then
\begin{align*}
\|N_k G_r\|_{L^p(\R^d,dt;L^2(\R^d,ds))}^p & = \int_{\R^d} \Big(\int_{\R^d} k^2(s) G_r^2(t-s) \, ds\Big)^{p/2} \, dt
\\ & \geq \int_{((a+b)/2,b)^d} \Big(\int_{t-(0, r)^d} 1 \, ds\Big)^{p/2} \, dt = r^{dp/2}(\delta/2)^d.
\end{align*}
On the other hand, $\|G_r\|_{L^p(\R^d)}^p = r^d$.
Therefore, $\|N_k\|\geq  r^{d(\frac12-\frac1p)}(\delta/2)^d$.
Letting $r\downarrow 0$, the boundedness of $N_k$ forces that $p\in [2, \infty)$.

To show that $X$ has type $2$ we may assume that $H=\R$ (identify $X$ with a
closed subspace of $\gamma(H,X)$ via the mapping $x\mapsto h_0\otimes x$, where
$h_0\in H$ is some fixed norm one vector).

As before let $k = \one_{(a,b)^d}$ with $a<b$ and fix $0<r\le \delta/2$ with $\delta = b-a$.
Fix a simple function $G:\R^d\to X$ with
support in $I = (0,r)^d$. For all $t\in J= ((a+b)/2, b)^d$,
one has
\[\| G\|_{\g(L^2(I),X)} = \|k(t-\cdot) G\|_{\g(L^2(I),X)}.\]
It follows that
\begin{align*}
|J|^{1/p} \| G\|_{\g(L^2(I),X)} & = \|t\mapsto k(t-\cdot) G\|_{L^p(J;\g(L^2(I),X))}
\\ & \leq \|t\mapsto k(t-\cdot) G\|_{L^p(\R^d;\g(L^2(\R^d),X))}
\\ & = \|N_k G\|_{L^p(\R^d;\g(L^2(\R^d),X))} \\ & \leq \|N_k\| \, \|G\|_{L^p(\R^d;X)} = \|N_k\| \,
\|G\|_{L^p(I;X)}.
\end{align*}
As a consequence, the identity mapping on $L^p(I)\otimes X$
extends to a bounded operator from
$L^p(I;X)$ to $\g(L^2(I),X).$
Hence by \cite[Proposition 6.1]{RS}, $X$ has type $2$.
\end{proof}
Inspection of the proof shows that the following weaker version of (2) already implies
(1):
\medskip

\begin{enumerate}[\rm(1)]
 \item[\rm(2$'$)] {\em There exist real numbers $a<b$ such that
 the mapping $G\mapsto N_{\one_{(a,b)^d}}G$
 extends to a bounded operator}
$$N_{\one_{(a,b)^d}}: L^p(\R^d;X) \to L^p(\R^d;\g(L^2(\R^d),X))).$$
\end{enumerate}
In view of this we shall assume from now on that $X$ has type $2$ and consider only exponents
$p\in [2,\infty)$.
We now fix a subset $\mathscr{K}\subseteq L^2(\R^d)$
and consider the family $$\mathscr{N}_{\mathscr{K}} := \{N_k: k\in \mathscr{K}\}.$$
By the previous result,
the operators in $\mathscr{N}_{\mathscr{K}}$ extend to bounded operators
from $L^p(\R^d;X)$ to
$L^p(\R^d;\g(L^2(\R^d),X))$. By slight abuse of notation, the resulting family
of extensions will be denoted by $\mathscr{N}_{\mathscr{K}}$ again.

In the next result we investigate the role of $H$ with regard to the $R$-boundedness properties
of $\mathscr{N}_{\mathscr{K}}$.

\begin{proposition}[Independence of $H$]\label{prop:alphaplus}
Let $X$ be a Banach space with type $2$, $H$ be a non-zero Hilbert space,
and $p\in [2,\infty)$. For any set
$\mathscr{K}\subseteq L^2(\R^d)$, the following assertions are equivalent:
\begin{enumerate}[\rm(1)]
\item The family $\mathscr{N}_{\mathscr{K}}$
\!is $R$-bounded from $L^p(\R^d;X)$ to $
L^p(\R^d;\g(L^2(\R^d),X))$;
\item The family
$\mathscr{N}_{\mathscr{K}}$
\!is $R$-bounded from $L^p(\R^d;\g(H,X))$ to
$L^p(\R^d;\g(L^2(\R^d;H),X))$.
\end{enumerate}
\end{proposition}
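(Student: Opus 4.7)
The plan is to reduce to the scalar case $H = \R$ and then lift back. The implication (2)$\Rightarrow$(1) is immediate: specializing $H = \R$ identifies $\g(\R,X) = X$ isometrically and $L^2(\R^d;\R) = L^2(\R^d)$, so the family appearing in (2) becomes literally the family in (1).

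For the harder direction (1)$\Rightarrow$(2), the plan is to chain together three ingredients: the $\g$-extension of $R$-boundedness (Proposition \ref{prop:Rbdd-ext}), the $\g$-Fubini isomorphism (Proposition \ref{prop:g-Fub}), and property $(\alpha^+)$ of $X$, which follows from the type 2 assumption by Proposition \ref{prop:typealpha}(1). Starting from the $R$-boundedness of $\mathscr{N}_{\mathscr{K}}$ in (1), Proposition \ref{prop:Rbdd-ext} yields the $R$-boundedness of $\{I_H\otimes N_k : k\in\mathscr{K}\}$ from $\g(H,L^p(\R^d;X))$ to $\g(H, L^p(\R^d;\g(L^2(\R^d),X)))$. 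Applying Proposition \ref{prop:g-Fub} to both source and target rewrites this as $R$-boundedness from $L^p(\R^d;\g(H,X))$ to $L^p(\R^d; \g(H,\g(L^2(\R^d),X)))$. Finally, by Proposition \ref{prop:alpha}(1) (with $H_1=H$, $H_2 = L^2(\R^d)$) together with the Hilbert space identification $H\overline\otimes L^2(\R^d)=L^2(\R^d;H)$, post-composition (applied pointwise in $t\in\R^d$) with the bounded $(\alpha^+)$-map $\g(H,\g(L^2(\R^d),X))\to\g(L^2(\R^d;H),X)$ preserves $R$-boundedness and lands in the target of (2). The resulting $R$-bound is controlled by the product of the $R$-bound in (1), the type 2 constant $T_2(X)$, and the $(\alpha^+)$-constant of $X$.

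The step I expect to require the most care — the main obstacle, though not a deep one — is verifying that the operator obtained at the end of this chain of identifications really equals $N_k$ defined directly on $L^p(\R^d;\g(H,X))$, i.e.\ $(N_kG)(t)(s) = k(t-s)G(s)$ interpreted as an element of $\g(L^2(\R^d;H),X)$ via the type 2 embedding of Proposition \ref{prop:L2gamma}(1). I would verify this by tracking elementary tensors $G(s) = g(s)\, h_0 \otimes x$: the $\g$-Fubini identifications send $G$ to $h_0\otimes(k(t-\cdot)g(\cdot)\otimes x)\in\g(H,\g(L^2(\R^d),X))$ at time $t$, the $(\alpha^+)$-map sends this to $(h_0\otimes k(t-\cdot)g(\cdot))\otimes x\in\g(L^2(\R^d;H),X)$, and this is precisely the image of $s\mapsto k(t-s)g(s)\,h_0\otimes x$ under the type 2 embedding $L^2(\R^d;\g(H,X))\hookrightarrow\g(L^2(\R^d;H),X)$. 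By density of such elementary tensors and boundedness of all maps involved, the identification extends to all of $L^p(\R^d;\g(H,X))$, completing the argument.
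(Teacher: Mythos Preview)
Your proof is correct and follows essentially the same route as the paper: Proposition \ref{prop:Rbdd-ext} to tensor with $I_H$, then the $\gamma$-Fubini isomorphism, then the $(\alpha^+)$-embedding $\gamma(H,\gamma(L^2(\R^d),X))\hookrightarrow\gamma(L^2(\R^d;H),X)$ via Propositions \ref{prop:alpha} and \ref{prop:typealpha}. Your additional verification on elementary tensors that the resulting operator coincides with $N_k$ is a nice touch of rigor that the paper leaves implicit.
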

\begin{proof}
We only need to prove that
(1) implies (2); the converse implication follows by restricting to a one-dimensional subspace of $H$
and identifying $\gamma(\R,X)$ with $X$.

Suppose now that (1) holds. By Proposition \ref{prop:Rbdd-ext} each operator
in $\mathscr{N}_{\mathscr{K}}$ extends to a bounded operator
from $\gamma(H,L^p(\R^d;X))$ to $\gamma(H, L^p(\R^d;\g(L^2(\R^d),X)))$ and the resulting
family of extensions is again $R$-bounded.
By the $\gamma$-Fubini isomorphism (Proposition \ref{prop:g-Fub}),
$\mathscr{N}_{\mathscr{K}}$
extends to an $R$-bounded family of operators from
$L^p(\R^d;\gamma(H,X))$ to $L^p(\R^d;\gamma(H,\g(L^2(\R^d),X)))$. Now the result follows from the fact that
$\gamma(H,\g(L^2(\R^d),X))$ embeds continuously into $\g(L^2(\R^d;H),X)$ by
Propositions \ref{prop:alpha} and \ref{prop:typealpha}.
\end{proof}

The main result of this section reduces the problem of proving $R$-boundedness of a family of operators $N_k$
to proving $\ell^{1}$-boundedness of the corresponding family of
convolution operators $T_{k^2}$ (see Section \ref{sec:det} for the definition of these operators).

We recall from Proposition \ref{prop:type-Mtype} and its proof that a Banach lattice has type $2$ if and only if it has martingale
type $2$, and that such a Banach lattice is $2$-convex. Because of this, its $2$-concavification
$X^2$ is a Banach lattice again. If $X$ is a Banach function space
over some measure space $(S,\mu)$ (this is the only case we shall consider),
$X^2$ consists of all measurable functions $f: S\to \R$ such that $|f| = g^2$ for
some $g\in X$,
identifying functions which are equal $\mu$-almost everywhere. For example, when $X = L^q(S)$ with $q\in [2,\infty)$,
then $X^2 = L^{q/2}(S)$.

\begin{theorem}\label{thm:workinglemma}
Let $X$ be a Banach lattice with type $2$ and let $X^2$
denote its $2$-concavification. Let $p\in [2,\infty)$.
For any set of kernels $\mathscr{K}\subseteq  L^2(\R^d)$, the following assertions are equivalent:
\begin{enumerate}[\rm(1)]
\item
The family $$\mathscr{N}_\mathscr{K} := \{N_k: k\in \mathscr{K}\}$$ is $R$-bounded
from $L^p(\R^d;X)$ to $L^p(\R^d;\g(L^2(\R^d),X))$;
\item The family $$\mathscr{T}_{\mathscr{K}^2} : = \{T_{k^2}: k\in \mathscr{K}\}$$ is $\ell^{1}$-bounded
 on $L^{p/2}(\R^d;X^2)$.
\end{enumerate}
Moreover, $R(\mathscr{N}_\mathscr{K})\eqsim_{p,X} (R^{1}(\mathscr{T}_{\mathscr{K}^2}))^{1/2}$.
\end{theorem}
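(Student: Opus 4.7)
The plan is to recognise assertions (1) and (2) as, up to absolute constants depending only on $p$ and $X$, two equivalent forms of a single $L^{p/2}(\R^d;X^2)$ inequality, obtained by twice converting Rademacher averages into lattice square functions. The $2$-concavification $X^2$ is itself a Banach lattice because type $2$ implies $2$-convexity (Proposition \ref{prop:type-Mtype} and its proof).

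First I would invoke Proposition \ref{prop:alphaplus} to reduce to $H=\R$, and combine the Kahane--Khintchine inequalities with Fubini (using $p\ge 2$) to rewrite $R$-boundedness of $\mathscr{N}_\mathscr{K}$ as
\[
\int_{\R^d} \E \Bigl\n \sum_n r_n (N_{k_n} f_n)(t)\Bigr\n_{\gamma(L^2(\R^d),X)}^{p}\, dt
\lesssim_{p,X} R(\mathscr{N}_\mathscr{K})^p \int_{\R^d} \E \Bigl\n \sum_n r_n f_n(t) \Bigr\n_X^p\, dt,
\]
for finite families $(k_n)\subseteq\mathscr{K}$ and simple $(f_n)$ in $L^p(\R^d;X)$. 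The key identity is that since $(N_{k_n} f_n)(t)(s) = k_n(t-s)f_n(s)$,
\[
\int_{\R^d} |(N_{k_n} f_n)(t)(s)|^2\, ds = (k_n^2 * |f_n|^2)(t) = (T_{k_n^2}|f_n|^2)(t),
\]
with $|f_n|^2$ read in the lattice sense as an element of $X^2$. Applying Lemma \ref{lem:squarefunction} to the $\gamma(L^2(\R^d),X)$-valued sum on the left, and using the identity $\n g^{1/2}\n_X^2 = \n g\n_{X^2}$ for non-negative $g\in X^2$, turns the left-hand integral into $\n \sum_n T_{k_n^2}|f_n|^2\n_{L^{p/2}(\R^d;X^2)}^{p/2}$. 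An analogous but simpler application of the scalar lattice square-function equivalence \eqref{eq:sqf} to the right-hand integrand converts it into $\n \sum_n |f_n|^2\n_{L^{p/2}(\R^d;X^2)}^{p/2}$.

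Writing $g_n := |f_n|^2$, the $R$-boundedness of $\mathscr{N}_\mathscr{K}$ is thereby seen to be equivalent, with constant squared, to the inequality
\[
\Bigl\n \sum_n T_{k_n^2} g_n\Bigr\n_{L^{p/2}(\R^d;X^2)} \lesssim_{p,X} R(\mathscr{N}_\mathscr{K})^2 \Bigl\n \sum_n g_n\Bigr\n_{L^{p/2}(\R^d;X^2)}
\]
for every finite family of non-negative simple $g_n$. I would close the argument by observing that each $T_{k^2}$ is a positive operator: the bijection $g\mapsto g^{1/2}$ between non-negative simples in $L^{p/2}(\R^d;X^2)$ and $L^p(\R^d;X)$ accounts for the passage from simple $f_n$ to arbitrary non-negative inputs $g_n$, while the pointwise domination $|T_{k_n^2} x_n|\le T_{k_n^2}|x_n|$ upgrades the non-negative-input inequality to general inputs, yielding the full $\ell^{1}$-boundedness of $\mathscr{T}_{\mathscr{K}^2}$ in the sense of Definition \ref{def:Rs} with $R^{1}(\mathscr{T}_{\mathscr{K}^2})\eqsim_{p,X} R(\mathscr{N}_\mathscr{K})^2$; the reverse implication is immediate. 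The main bookkeeping obstacle will be keeping straight which lattice the various squares and square roots live in; all genuine analytic inputs — independence of $H$, the $\gamma$-square-function equivalence, and $2$-convexity from type $2$ — are already in hand.
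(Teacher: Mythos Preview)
Your proposal is correct and follows essentially the same route as the paper: convert the Rademacher sums on both sides into lattice square functions via Lemma \ref{lem:squarefunction} and \eqref{eq:sqf}, recognise the identity $\int |k_n(t-s)f_n(s)|^2\,ds = (k_n^2*|f_n|^2)(t)$, and pass between $X$ and $X^2$ by squaring. Two small remarks: the reduction to $H=\R$ via Proposition \ref{prop:alphaplus} is unnecessary here, since the theorem is already stated for scalar-valued inputs; and your explicit positivity argument for (1)$\Rightarrow$(2), using $|T_{k^2}g|\le T_{k^2}|g|$ to pass from non-negative to signed $g_n$, is exactly the content the paper hides behind ``proved similarly''.
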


\begin{proof}

(2) $\Rightarrow$ (1): \  Assume that $\mathscr{T}_{\mathscr{K}^2}$ is $\ell^{1}$-bounded and fix $k_1,
\ldots, k_N\in \mathscr{K}$. Let $G_1,\dots, G_N\in L^p(\R^d;X)$ be simple functions.
As $X$ has type $2$, it also has finite cotype
(see \cite[Example 11.1.2 and Theorem 11.1.14]{AlKa}).
Since each $G_n$ takes values in a finite-dimensional subspace of $X$, a standard argument shows
that we may assume $X$ is a Banach function space (see \cite[Theorem 3.9]{VerEmb}).

Set $f_n: = |G_n|^2$. By Lemma \ref{lem:squarefunction},
\begin{align*}
 \E_r \Big\|\sum_{n=1}^N r_n (N_{k_n} G_n)(t) \Big\|_{\g(L^2(\R^d),X)}^p
\! & = \E_r
\Big\|s\mapsto \sum_{n=1}^N r_n k_n(t-s) G_n(s)
 \Big\|^p_{\g(L^2(\R^d),X)}
\\ & \eqsim_{p,X} \Big\|  \Big(\sum_{n=1}^N  \int_{\R^d} |k_n(t-s)
G_n(s)|^2  \, ds\Big)^{1/2} \Big\| ^p
\\ & = \Big\| \Big(\sum_{n=1}^N   \int_{\R^d} k_n^2(t-s) f_n(s) \, ds  \Big)^{1/2}
\Big\| ^p
\\ & = \Big\| \Big(\sum_{n=1}^N  k_n^2* f_n(t)  \Big)^{1/2} \Big\| ^p
\\ & = \Big\|\sum_{n=1}^N  k_n^2* f_n(t)   \Big\|_{{X^2}}^{p/2}.
\end{align*}

Integrating over $\R^d$, it follows that
\begin{align*}
\E_r \Big\|\sum_{n=1}^N r_n (N_{k_n} G_n) \Big\|_{L^p(\R^d;\g(L^2(\R^d),X))}^p
& =
\int_{\R^d} \Big\|\sum_{n=1}^N  k_n^2* f_n \Big\|_{{X^2}}^{p/2} \, dt
\\ & =
\Big\|\sum_{n=1}^N  k_n^2* f_n \Big\|_{L^{p/2}(\R^d;{X^2})}^{p/2}
\\ & \leq
(R^{1}(\mathscr{T}_{\mathscr{K}^2}))^{p/2} \Big\|\sum_{n=1}^N f_n\Big\|_{L^{p/2}(\R^d;{X^2})}^{p/2}.
\end{align*}
Now for the latter one has
\begin{align*}
\Big\|\sum_{n=1}^N f_n\Big\|_{L^{p/2}(\R^d;{X^2})}^{p/2} & = \Big\|\Big(\sum_{n=1}^N
|G_n|^2\Big)^{1/2}\Big\|_{L^{p}(\R^d;X)}^{p}
\\ & \eqsim_X \Big\| \sum_{n=1}^N r_n G_n \Big\|_{L^{p}(\R^d;L^2(\O_r;X))}^{p}
\leq \E_r \Big\| \sum_{n=1}^N r_n G_n \Big\|_{L^{p}(\R^d;X)}^{p}.
\end{align*}
Combing the estimates, the result follows.

(1) $\Rightarrow$ (2): \ This is proved similarly.
\end{proof}

\section{Stochastic integration}\label{sec:stoch}

We begin recalling some basic facts from the theory of stochastic integration in
UMD Banach spaces as developed in \cite{NVW1} (for a survey see \cite{NVW13}).

Let $(\O,\P)$ be a probability space and let $H$ be a Hilbert space.
An {\em $H$-cylindrical Brownian motion}
is a bounded linear operator $W_H$ from $L^2(\R_+;H)$ to $L^2(\O)$ such
that
\begin{enumerate}
\item[\rm(i)] for all $f\in L^2(\R_+;H)$ the random variable $W_H f$ is centered Gaussian;
\item[\rm(ii)]  for all $f,g\in L^2(\R_+;H)$ we have
$ \E (W_H f \cdot W_H g) = [f,g]_{L^2(\R_+;H)}.$
\end{enumerate}
If $(\O,\P)$ is endowed with a filtration $\F = (\F_t)_{t\in \R_+}$, we call
$W_H$ a {\em $H$-cylindrical $\F$-Brownian motion} on $H$ if $W_H f$ is
independent of $\F_t$ for all $f\in L^2(\R_+;H)$ with support in $(t,\infty)$.
In that case, $t\mapsto W_H (\one_{(0,t)}\otimes h)$ is an $\F$-Brownian motion
for all $h\in H$, which is standard if $\n h\n=1$. Two such Brownian motions
are independent if and only if the corresponding vectors $h$ are orthogonal.
If there is no danger of confusion we also use the standard notation $W_H(t)
h$ for the random variable $W_H(\one_{(0,t)}\otimes h)$.

For $0\le a<b<\infty$, $x\in X$, and an $\F_a$-measurable set $A\subseteq  \O$, the
stochastic integral of the indicator process $(t,\omega)\mapsto \one_{(a,b]\times
A}(t, \omega)\,h\otimes x$ with respect to $W_H$ is defined as
\[\int_{0}^t \one_{(a,b]\times A}\otimes (h\otimes x)\,dW_H
:= \one_A \,W_H(\one_{(a\wedge t, b\wedge t]}\otimes h) \otimes x, \quad t\in \R_+.\]
By linearity, this definition extends to {\em adapted finite rank step
processes}, which we define as finite linear combinations of  indicator
processes of the above form.

\begin{proposition}[Burkholder inequality for martingale type $2$ spaces; see \cite{Brz1, Brz2, Ondrejat04}]\label{prop:Burkholder}
Let $X$ have martingale type $2$ and let $p\in (1, \infty)$ be fixed.
For all
adapted finite rank step processes $G$ we have
$$ \E \sup_{t\in\R_+}\Big\n \int_0^t G\,dW_H\Big\n^p \le C_{p,X}^p \|G\|_{L^p(\O;L^2(\R_+;\gamma(H,X)))}^p.$$
\end{proposition}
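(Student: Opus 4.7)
First, I would reduce to a finite-rank adapted step process of the form $G = \sum_{k=1}^N \one_{(t_{k-1},t_k]} \otimes G_k$ with $0=t_0<t_1<\cdots<t_N$ and each $G_k$ an $\F_{t_{k-1}}$-measurable finite-rank element of $\g(H,X)$. Set $M_n := \int_0^{t_n} G\,dW_H$, so that $(M_n)_{n=0}^N$ is a discrete-time $X$-valued martingale with respect to $(\F_{t_n})$ whose increments are $d_k := M_k-M_{k-1} = \int_{t_{k-1}}^{t_k} G_k\,dW_H$. Because $t\mapsto M_t$ is a continuous $X$-valued $\F$-martingale which stabilises after $t_N$, Doob's $L^p$-maximal inequality ($1<p<\infty$) reduces the bound on $\E\sup_t\n M_t\n^p$ to a bound on $\E\n M_N\n^p$.

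The key pointwise ingredient concerns the conditional law of each $d_k$. Writing $G_k = \sum_j h_{k,j}\otimes x_{k,j}$ with $(h_{k,j})_j$ orthonormal in $H$, we have
\[
d_k = \sum_j W_H(\one_{(t_{k-1},t_k]}\otimes h_{k,j})\,x_{k,j},
\]
and by adaptedness the Gaussians $W_H(\one_{(t_{k-1},t_k]}\otimes h_{k,j})$ are, conditionally on $\F_{t_{k-1}}$, i.i.d.\ centred with variance $t_k-t_{k-1}$ and independent of $\F_{t_{k-1}}$. Hence conditionally on $\F_{t_{k-1}}$ the vector $d_k$ has the same law as $\sqrt{t_k-t_{k-1}}\sum_j \gamma_j x_{k,j}$, so the definition \eqref{eq:gamma} of the $\g$-radonifying norm together with the Kahane--Khintchine inequalities gives
\[
\E\big[\n d_k\n^r \,\big|\, \F_{t_{k-1}}\big] \le K_r^r\,(t_k-t_{k-1})^{r/2}\,\n G_k\n_{\g(H,X)}^r \qquad \text{for all } r\in[1,\infty),
\]
with equality and constant $1$ when $r=2$.

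For $p=2$ the martingale type $2$ property applied to $(d_k)_{k=1}^N$ then yields
\[
\E\n M_N\n^2 \le \mu_{2,X}^2\sum_{k=1}^N \E\n d_k\n^2 = \mu_{2,X}^2\sum_{k=1}^N (t_k-t_{k-1})\,\E\n G_k\n_{\g(H,X)}^2 = \mu_{2,X}^2\,\n G\n^2_{L^2(\O;L^2(\R_+;\g(H,X)))}.
\]
To reach general $p\in(1,\infty)$ I would invoke Pisier's renorming theorem, which identifies martingale type $2$ with the existence of an equivalent $2$-smooth norm; equivalently, $\psi(x):=\n x\n^2$ is $C^2$ with $\sup_x\n\psi''(x)\n<\infty$. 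It\^o's formula applied to $\psi(M_t)^{p/2}$ in the smoothly renormed space, combined with the Gaussian computation above to control the quadratic-variation term by $\int_0^t\n G(s)\n^2_{\g(H,X)}\,ds$, leads to
\[
\E\n M_N\n^p \le C_{p,X}^p\,\E\Big(\int_0^\infty \n G(s)\n_{\g(H,X)}^2\,ds\Big)^{p/2} = C_{p,X}^p\,\n G\n^p_{L^p(\O;L^2(\R_+;\g(H,X)))}
\]
for $p\ge 2$. For $p\in(1,2)$ one descends from the $p=2$ bound via Lenglart's domination inequality applied to the Doob-stopped processes. Combining these estimates with the opening Doob reduction and a standard approximation by finite-rank step processes then completes the proof.

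The main obstacle is the passage from the elementary $p=2$ case to general $p$: the former is an essentially one-line consequence of the definition of martingale type $2$, but the extension depends crucially on Pisier's nontrivial renorming theorem and the attendant It\^o calculus in the $2$-smooth renormed space. Once that machinery is invoked, Doob's inequality and a routine density argument take care of the rest.
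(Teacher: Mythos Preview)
The paper does not prove this proposition; it quotes it from the cited references of Brze\'zniak and Ondrej\'at. Your sketch is a correct outline and is in fact close in spirit to what those references do: the passage through Pisier's equivalence between martingale type~$2$ and $2$-uniform smoothness, followed by an It\^o-type expansion of the (re)norm squared to control the $L^p$-moments for $p\ge 2$, and a Lenglart domination argument to reach $p\in(1,2)$, is exactly the standard route. The Doob reduction and the conditional-Gaussian identification of the increments are handled correctly.

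Two small remarks. First, for the Lenglart step you need the $p=2$ bound not just at deterministic times but at bounded stopping times, i.e.\ $\E\n M_\tau\n^2 \le C\,\E\int_0^\tau \n G(s)\n_{\g(H,X)}^2\,ds$; this follows from the same martingale type~$2$ argument applied to the stopped martingale (or from the It\^o-formula version), so it is worth making explicit. Second, in the $p\ge 2$ case the phrase ``It\^o's formula applied to $\psi(M_t)^{p/2}$'' hides the real work: one uses the $2$-smoothness inequality $\n x+y\n^2 \le \n x\n^2 + 2\langle J(x),y\rangle + C\n y\n^2$ (with $J$ the duality map) to obtain the pathwise bound $\n M_N\n^2 \le 2\sum_k \langle J(M_{k-1}),d_k\rangle + C\sum_k \n d_k\n^2$, and then either iterates or combines with the BDG-type machinery; the conditional Gaussian computation you wrote down is precisely what converts $\sum_k\n d_k\n^2$ into $\int\n G\n_{\g(H,X)}^2$ after a mesh-refinement limit. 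With these points made precise, your plan matches the proofs in the cited literature.
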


Under the identification
$$
\one_{(a,b]\times
A} \otimes (h\otimes x) = \one_A \otimes ((\one_{(a,b]}\otimes h)\otimes x),
$$ we may identify finite rank step processes
with elements in $L^p(\O;\g(L^2(\R_+;H),X)))$ and we
have the following estimate.

\begin{proposition}\cite[Theorems 5.9, 5.12]{NVW1}\label{prop:NVW}
Let $X$ be a UMD Banach space and let $p\in (1,\infty)$ be fixed. For all
adapted finite rank step processes $G$ we have
$$
c^p\E\n G\n_{\g(L^2(\R_+;H),X)}^p \le \E \sup_{t\in \R_+}\Big\n \int_0^t G\,dW_H\Big\n^p
\le C^p \E\n G\n_{\g(L^2(\R_+;H),X)}^p,
$$
with constants $0<c\le C<\infty$ independent of $G$.
\end{proposition}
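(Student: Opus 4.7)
The plan is to reduce the two-sided inequality at the level of $\sup_t$ to its time-terminal version
$$
c^p\,\E\|G\|_{\g(L^2(\R_+;H),X)}^p \;\le\; \E\Big\|\int_0^\infty G\,dW_H\Big\|^p \;\le\; C^p\,\E\|G\|_{\g(L^2(\R_+;H),X)}^p,
$$
via Doob's maximal inequality: $t\mapsto \int_0^t G\,dW_H$ is an $X$-valued martingale (UMD spaces are reflexive and in particular have no $c_0$ subspace, so Doob applies after randomisation), and $\sup_t$ is absorbed into a factor depending on $p$ only. Thus the real content is to prove the two-sided equivalence for the terminal integral.

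For the terminal version I would write the adapted step process as a finite sum
$$
G \;=\; \sum_{n=1}^N \sum_{j=1}^J \one_{(t_{n-1},t_n]}\otimes (h_j\otimes \xi_{nj}),
$$
with $\xi_{nj}\in L^p(\Omega,\F_{t_{n-1}};X)$ and $(h_j)_j$ orthonormal in $H$. Then
$$
\int_0^\infty G\,dW_H \;=\; \sum_{n,j} \xi_{nj}\,W_H\bigl(\one_{(t_{n-1},t_n]}\otimes h_j\bigr)
$$
is a sum of martingale differences. The UMD randomisation inequality \eqref{eq:UMDrandom} then gives, with $\b=\b_{p,X}$,
$$
\b^{-p}\,\E\Big\|\sum_{n,j} \xi_{nj} W_H(\one_{(t_{n-1},t_n]}\otimes h_j)\Big\|^p
\;\le\; \E\E_r \Big\|\sum_{n,j} r_{nj}\,\xi_{nj} W_H(\one_{(t_{n-1},t_n]}\otimes h_j)\Big\|^p
\;\le\; \b^p\,(\cdots),
$$
and combining this with a second randomisation (against an independent copy of $W_H$) yields the standard \emph{decoupling} identity
$$
\E\Big\|\int_0^\infty G\,dW_H\Big\|^p \;\eqsim_{p,X}\; \E\,\wt\E\Big\|\int_0^\infty G\,d\wt W_H\Big\|^p,
$$
where $\wt W_H$ is an $H$-cylindrical Brownian motion independent of $\F_\infty$.

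Conditionally on $\F_\infty$ the process $G$ is deterministic, so $\int_0^\infty G\,d\wt W_H$ is a centred Gaussian random variable in $X$ whose distribution coincides with that of the Gaussian vector associated with the operator $G\in\g(L^2(\R_+;H),X)$. By the isometric identification recalled in \S\ref{subsec:rad}, together with the Kahane–Khintchine inequality for Gaussians,
$$
\wt\E\Big\|\int_0^\infty G\,d\wt W_H\Big\|^p \;\eqsim_p\; \|G\|_{\g(L^2(\R_+;H),X)}^p.
$$
Taking $\E$ and combining with the decoupling estimate yields the desired two-sided inequality. A density/approximation argument extends the claim from adapted finite rank step processes (the stated class).

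The principal difficulty is the decoupling step: proving that one can legitimately replace $dW_H$ by $d\wt W_H$ with equivalent $L^p$-norms is precisely where the full strength of the UMD hypothesis is used, and it cannot be done by a soft randomisation argument alone — one must invoke Garling's (or McConnell's) decoupling characterisation of UMD, which in turn requires a careful conditional expectation argument identifying the tangent sequence of the martingale differences $\xi_{nj}W_H(\one_{(t_{n-1},t_n]}\otimes h_j)$ with those generated by $\wt W_H$. Once this has been established, the reduction to $\g$-norms through conditional Gaussianity and Kahane–Khintchine is standard.
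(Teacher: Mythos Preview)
The paper does not give its own proof of this proposition; it is quoted verbatim from \cite[Theorems 5.9, 5.12]{NVW1} and used as a black box. Your sketch is, in outline, precisely the argument of \cite{NVW1}: Doob's maximal inequality to pass between the running supremum and the terminal integral, decoupling to replace $W_H$ by an independent copy $\wt W_H$, and then the conditional Gaussian identification with the $\g$-norm via Kahane--Khintchine.

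Two small points. First, Doob's $L^p$-maximal inequality for $p>1$ holds for martingales in \emph{any} Banach space (apply scalar Doob to the submartingale $\|M_t\|$); the remark about reflexivity and $c_0$ is unnecessary. Second, the middle display and the phrase ``combining this with a second randomisation \dots yields the standard decoupling identity'' is misleading: the randomisation inequality \eqref{eq:UMDrandom} by itself does \emph{not} give decoupling. You recognise this yourself in the final paragraph, correctly naming Garling/McConnell's tangent-sequence decoupling as the genuine UMD ingredient, but the earlier passage should be rewritten so as not to suggest that decoupling is a formal consequence of sign-randomisation. With that clarified, the argument is the standard one and matches \cite{NVW1}.
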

When $G$ does not depend on $\O$ the UMD condition can be omitted in the above result.

By a standard density argument (see \cite{NVW1} for details),
the stochastic integral has a unique extension to the Banach space
$L_\F^p(\O;\g(L^2(\R_+;H),X))$ of all adapted elements in
$L^p(\O;\g(L^2(\R_+;H),X))$, that is, the closure in $L^p(\O;\g(L^2(\R_+;H),X))$
of all adapted simple processes
with values in $H\otimes X$.
 In the remainder of this paper, all stochastic
integrals are understood in this sense.

\subsection{Stochastic convolution operators}

For kernels $k\in L^2(\R_+)$ and adapted finite rank step processes
$G:\R_+\times\O\to H\otimes X$ we define the adapted process $S_{k} G:\R_+\times\O\to
X$ by
\begin{align}\label{eq:Ikoperator}
S_{k}^H G(t):= \int_0^t k(t-s) G(s) \, dW_H(s), \ \ t\in \R_+.
\end{align}
Since $G$ is an adapted finite rank step process,
the It\^o integration theory for scalar-valued processes  (see \cite[Chapter 17]{Kal}) shows
that the above stochastic integral is well defined for all $t\in \R_+$.

The following observation is a direct consequence of Proposition \ref{prop:Burkholder} and Young's inequality.
\begin{proposition} Let $X$ be a Banach space, $H$ a Hilbert space, and $p\in [2,\infty)$.
If $X$ has martingale type $2$, the mapping $S_k^H: G\mapsto S_k^HG$ extends to a bounded operator
from
$L^p_\F(\R_+\times \Omega;\g(H,X))$ to $L^p(\R_+\times \O;X)$.
\end{proposition}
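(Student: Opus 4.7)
The plan is to combine the Burkholder inequality for martingale type $2$ spaces (Proposition \ref{prop:Burkholder}) with Young's convolution inequality, followed by the standard density argument already used in Section \ref{sec:stoch} to define the stochastic integral.

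First I would fix an adapted finite rank step process $G$ and, for each $t\in\R_+$, apply Proposition \ref{prop:Burkholder} to the adapted finite rank step process $s\mapsto \one_{(0,t)}(s)k(t-s)G(s)$ (this is indeed adapted and of finite rank step type because $k(t-s)$ is deterministic). This yields the pointwise-in-$t$ square function bound
$$
\E\n S_k^H G(t)\n^p \le C_{p,X}^p\, \E\Big(\int_0^t |k(t-s)|^2\,\n G(s)\n_{\g(H,X)}^2\, ds\Big)^{p/2}.
$$
Writing $g_\omega(s):=\n G(s,\omega)\n_{\g(H,X)}^2$, extended by zero to $s<0$, the inner quantity equals $(|k|^2 * g_\omega)(t)^{p/2}$, so after integrating in $t$ and applying Fubini the problem reduces to estimating $\E\,\n |k|^2 * g_\omega\n_{L^{p/2}(\R_+)}^{p/2}$.

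Next I would invoke Young's convolution inequality in the form $L^1 * L^{p/2}\hookrightarrow L^{p/2}$, valid precisely because $p/2\ge 1$, pointwise in $\omega$. This gives $\n |k|^2 * g_\omega\n_{L^{p/2}}\le \n k\n_{L^2}^2\,\n g_\omega\n_{L^{p/2}}$. Since $\n g_\omega\n_{L^{p/2}(\R_+)}^{p/2}=\int_0^\infty \n G(s,\omega)\n_{\g(H,X)}^p\,ds$, combining with the previous display and raising to the $p/2$-th power yields
$$
\n S_k^H G\n_{L^p(\R_+\times\O;X)} \le C_{p,X}\,\n k\n_{L^2(\R_+)}\,\n G\n_{L^p(\R_+\times\O;\g(H,X))}
$$
on the dense subclass of adapted finite rank step processes. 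The extension to the whole adapted space $L^p_\F(\R_+\times\O;\g(H,X))$ is then immediate by the density argument recalled just before the statement of the proposition.

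There is no real obstacle here: the proof is essentially the two-line combination of Burkholder and Young. The role of the martingale type $2$ hypothesis is to supply the square function bound in the first display, while the hypothesis $p\ge 2$ is needed exactly so that $p/2\ge 1$ and Young's inequality is applicable in the form $L^1 * L^{p/2}\hookrightarrow L^{p/2}$.
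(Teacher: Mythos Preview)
Your proposal is correct and follows exactly the approach the paper indicates: the paper itself states only that the result ``is a direct consequence of Proposition \ref{prop:Burkholder} and Young's inequality,'' and your argument spells out precisely this two-step combination. One tiny quibble: the integrand $s\mapsto \one_{(0,t)}(s)k(t-s)G(s)$ is adapted but is not literally a finite rank \emph{step} process unless $k$ is a step function; however, this is harmless since the Burkholder inequality extends by density to all adapted integrands in $L^p(\Omega;L^2(\R_+;\gamma(H,X)))$, and $k(t-\cdot)G$ clearly lies there.
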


Note that for deterministic integrands
\[\|S_k^H G(t)\|_{L^p(\O;X)} \eqsim_p \|s\mapsto k(t-s) G(s)\|_{\gamma(L^2(0,t;H),X)}.\]
Therefore, from the proof of Proposition \ref{prop:Rk-type2} we can deduce the following result:

\begin{proposition} Let $X$ be a Banach space, $H$ a non-zero Hilbert space, and $p\in [2,\infty)$.
The following assertions are equivalent:
\begin{enumerate}[\rm(1)]
\item  $X$ has type $2$;
\item For all $k\in L^2(\R_+)$ the mapping $S_k: G\mapsto S_kG$ extends to a bounded operator
from $L^p_\F(\R_+;X)$ into $L^p(\R_+\times\Omega;X)$;
\item For all $k\in L^2(\R_+)$ the mapping  $S_k^H: G\mapsto S_k^HG$ extends to a bounded operator
from $L^p_\F(\R_+;\g(H,X))$ into $L^p(\R_+\times\O;X)$.
\end{enumerate}
\end{proposition}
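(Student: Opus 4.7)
The plan is to mirror the structure of the proof of Proposition \ref{prop:Rk-type2}, with the deterministic $L^2$-in-time norm replaced by the stochastic $L^p(\O;X)$-norm, and with the identification of deterministic stochastic integrals with $\g$-radonifying norms (displayed immediately above the proposition) doing the rest of the work.

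For (1)$\Rightarrow$(3), I would run the pathwise Young-type calculation from the proof of Proposition \ref{prop:Rk-type2}(1)$\Rightarrow$(2), but with the Burkholder inequality of Proposition \ref{prop:Burkholder} in place of It\^o's isometry. Since a type $2$ Banach lattice has martingale type $2$ (Proposition \ref{prop:type-Mtype}(2)), Burkholder yields, for a fixed $t$ and an adapted finite rank step process $G$,
\begin{align*}
\E\|S_k^H G(t)\|^p \le C_{p,X}^p\, \E\Big(\int_0^t |k(t-s)|^2 \|G(s)\|_{\g(H,X)}^2\, ds\Big)^{p/2}.
\end{align*}
Integrating in $t$ and applying Young's convolution inequality with $k^2 \in L^1(\R_+)$ and $\|G\|_{\g(H,X)}^2 \in L^{p/2}(\R_+)$ yields
\[\|S_k^H G\|_{L^p(\R_+\times\O;X)}^p \le C_{p,X}^p\, \|k\|_{L^2(\R_+)}^p\, \E\|G\|_{L^p(\R_+;\g(H,X))}^p,\]
and (3) follows by extending by density. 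The implication (3)$\Rightarrow$(2) is immediate on restricting to a one-dimensional subspace $\R h_0\subseteq H$ with $\|h_0\|=1$ and using the isometric identification $\g(\R,X)\simeq X$.

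For (2)$\Rightarrow$(1) I would reuse the argument of (2)$\Rightarrow$(1) from Proposition \ref{prop:Rk-type2} almost verbatim, specialized to deterministic integrands via the equivalence
\[\|S_k G(t)\|_{L^p(\O;X)} \eqsim_p \|s\mapsto k(t-s)G(s)\|_{\g(L^2(0,t),X)}\]
displayed immediately above the proposition. Taking $k = \one_{(0,b)}$, a deterministic simple $G$ supported in $I = (0,r)$ with $0<r<b/2$, and $t$ ranging over $J = (b/2,b)$, one has $k(t-s)=1$ and therefore $\|k(t-\cdot)G\|_{\g(L^2(0,t),X)} = \|G\|_{\g(L^2(I),X)}$. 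Assumption (2) then degenerates to
\[|J|^{1/p}\, \|G\|_{\g(L^2(I),X)} \le C\,\|G\|_{L^p(I;X)},\]
so that the identity on $L^p(I)\otimes X$ extends to a bounded map $L^p(I;X) \to \g(L^2(I),X)$, which forces $X$ to have type $2$ by \cite[Proposition 6.1]{RS}.

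Note that the auxiliary verification $p \ge 2$ performed in Proposition \ref{prop:Rk-type2} is unnecessary here, since $p \in [2,\infty)$ is assumed from the outset. The only subtle point is (1)$\Rightarrow$(3): for a general type $2$ Banach space, Proposition \ref{prop:Burkholder} formally requires martingale type $2$, which is automatic both in the Banach lattice setting (Proposition \ref{prop:type-Mtype}(2)) and for UMD spaces of type $2$ (Proposition \ref{prop:type-Mtype}(1))—the two settings relevant to the present paper.
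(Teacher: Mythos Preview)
Your arguments for $(3)\Rightarrow(2)$ and $(2)\Rightarrow(1)$ are correct and coincide with the paper's route, which is to transport the problem to Proposition~\ref{prop:Rk-type2} via the displayed equivalence for deterministic integrands.

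The gap is in $(1)\Rightarrow(3)$. You invoke Proposition~\ref{prop:Burkholder}, which requires \emph{martingale} type $2$, and you yourself note that this is only available under an additional lattice or UMD hypothesis. Since the proposition is stated for an arbitrary Banach space with type $2$, this does not prove the implication as stated. The point you are missing is that the integrands here are deterministic (the domain is $L^p_\F(\R_+;\g(H,X))$, not $L^p_\F(\R_+\times\O;\g(H,X))$; compare with the preceding proposition and the sentence introducing this one). For deterministic $G$ one bypasses Burkholder entirely: the displayed equivalence
\[
\|S_k^H G(t)\|_{L^p(\O;X)}\eqsim_p \|s\mapsto k(t-s)G(s)\|_{\g(L^2(0,t;H),X)},
\]
combined with the type $2$ embedding $L^2(0,t;\g(H,X))\hookrightarrow \g(L^2(0,t;H),X)$ of Proposition~\ref{prop:L2gamma}(1), already gives
\[
\|S_k^H G(t)\|_{L^p(\O;X)} \lesssim_{p,X} \Big(\int_0^t |k(t-s)|^2\,\|G(s)\|_{\g(H,X)}^2\,ds\Big)^{1/2},
\]
after which your Young's inequality step goes through unchanged. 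This is precisely the paper's argument: the boundedness of $S_k^H$ on deterministic integrands is identified with the boundedness of $N_k$, and the latter was proved in Proposition~\ref{prop:Rk-type2} using only type $2$.
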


\section{$R$-boundedness of stochastic convolution operators}

We shall now apply the results of
Section \ref{sec:det}
to obtain $R$-boundedness results for stochastic convolution operators.
More specifically, we shall provide a connection between $R$-boundedness of stochastic convolutions
with kernel $k$ and $\ell^{1}$-boundedness of convolutions with the squared kernel $k^2$. For $d=1$, the results of the previous section imply their counterparts for $\R_+$ by considering functions and kernels supported on $\R_+$.

Recall that for $k\in L^2(\R_+)$ the stochastic convolution
operators $S_k$ have been defined by \eqref{eq:Ikoperator}.
For a subset $\mathscr{K}\subseteq L^2(\R_+)$ we write
$\mathscr{S}_{\mathscr{K}} := \{S_{k}: k\in \mathscr{K}\}$; we use the same notation
for the vector-valued extensions. We will be interested in the $R$-boundedness
of such families.
The first result asserts that it suffices to check $R$-boundedness on deterministic integrands:

\begin{theorem} \label{thm:main1}
Let $X$ be a Banach space with type $2$, $H$ be a non-zero
Hilbert space, and let $p\in [2,\infty)$.
For a set $\mathscr{K}\subseteq L^2(\R_+)$ the following assertions are equivalent:
\begin{enumerate}[\rm(1)]
 \item The family $\mathscr{S}_{\mathscr{K}}$
is $R$-bounded from
$L^{p}(\R_+;X)$ to $L^{p}(\R_+\times\Omega;X)$;
 \item The family $\mathscr{S}_{\mathscr{K}}^H$
is $R$-bounded from
$L^{p}(\R_+;\g(H,X))$ to $L^{p}(\R_+\times\Omega;X)$;
\item The family $\mathscr{N}_{\mathscr{K}}$
is $R$-bounded from $L^p(\R_+;X)$ to $L^p(\R_+;\g(L^2(\R_+),X))$.
\end{enumerate}
If $X$ has martingale
type $2$, the assertions {\rm(1)}--{\rm(3)} are equivalent to
\begin{enumerate}[\rm(1)]
\item[\rm(4)] The family $\mathscr{S}_{\mathscr{K}}$
is $R$-bounded from
$L^{p}_\mathscr{F}(\R_+\times\O;X)$ to $L^{p}(\R_+\times\O;X)$;
\item[\rm(5)] The family $\mathscr{S}_{\mathscr{K}}^H$
is $R$-bounded from
$L^{p}_\mathscr{F}(\R_+\times\O;\g(H,X))$ to $L^{p}(\R_+\times\O;X)$;
\end{enumerate}
If, moreover, $X$ is a Banach lattice (in which case the type $2$ assumption and the martingale
type $2$ assumption are equivalent), the assertions {\rm(1)}--{\rm(5)} are equivalent to
\begin{enumerate}[\rm(1)]
\item[\rm(6)] The family $\mathscr{T}_{\mathscr{K}^2} : = \{T_{k^2}: k\in \mathscr{K}\}$ is $\ell^{1}$-bounded
 on $L^{p/2}(\R_+;X^2)$.
\end{enumerate}
In all equivalences, the $R$-bounds are comparable with constants depending only on $p$ and $X$.
\end{theorem}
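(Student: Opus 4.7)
My plan is to establish the equivalences by first showing (2)$\Leftrightarrow$(3) via the stochastic-integral--$\gamma$-radonifying isomorphism for deterministic integrands, then (1)$\Leftrightarrow$(2) via Proposition~\ref{prop:alphaplus} (independence of $H$), next (3)$\Leftrightarrow$(6) as an immediate consequence of Theorem~\ref{thm:workinglemma}, and finally (2)$\Leftrightarrow$(5) and (1)$\Leftrightarrow$(4) by bridging the deterministic and adapted formulations with the Burkholder inequality (Proposition~\ref{prop:Burkholder}) in the martingale type~$2$ setting. The workhorses are the stochastic integration apparatus of Section~\ref{subsec:rad}, Proposition~\ref{prop:alphaplus}, Theorem~\ref{thm:workinglemma}, and Proposition~\ref{prop:Burkholder}.

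For the deterministic core, I use that for $G \in L^p(\R_+;\gamma(H,X))$ deterministic the stochastic-integral isomorphism gives, pointwise in $t$,
\[
\|S_k^H G(t)\|_{L^p(\Omega;X)} \eqsim_{p,X} \|s\mapsto k(t-s) G(s)\|_{\gamma(L^2(\R_+;H),X)} = \|(N_kG)(t)\|_{\gamma(L^2(\R_+;H),X)}.
\]
Raising to the $p$-th power, integrating in $t$, and applying the same argument to Rademacher sums shows that (2) is equivalent to the $R$-boundedness of $\mathscr{N}_\mathscr{K}$ from $L^p(\R_+;\gamma(H,X))$ to $L^p(\R_+;\gamma(L^2(\R_+;H),X))$. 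Proposition~\ref{prop:alphaplus} removes the dependence on $H$, giving (1)$\Leftrightarrow$(2)$\Leftrightarrow$(3), and when $X$ is a Banach lattice Theorem~\ref{thm:workinglemma} directly yields (3)$\Leftrightarrow$(6).

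The passage from deterministic to adapted, i.e.\ (1)$\Rightarrow$(4) and (2)$\Rightarrow$(5), is the main work; the converse implications are immediate since deterministic integrands are adapted. For (2)$\Rightarrow$(5) I take adapted finite-rank step processes $G_1,\dots,G_N$ and kernels $k_1,\dots,k_N \in \mathscr{K}$ and exploit the identity
\[
\sum_{n=1}^N r_n S_{k_n}^H G_n(t) = \int_0^t \sum_{n=1}^N r_n k_n(t-s) G_n(s)\,dW_H(s),
\]
valid because the Rademachers $(r_n)$ are independent of $W_H$. Applying Proposition~\ref{prop:Burkholder} conditionally on $(r_n)$ and pointwise in $t$ bounds the left-hand side in $L^p(\Omega;X)$ by an $L^p(\Omega;L^2(\R_+;\gamma(H,X)))$-norm of the Rademacher-randomised integrand.

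The hard part will be reconciling this $L^p(L^2)$-expression with the Rademacher-sum expression appearing on the right-hand side of (5). The cleanest route is a detour through (6) under the lattice assumption: Lemma~\ref{lem:squarefunction} rewrites the $\gamma$-square function emerging from Burkholder as a lattice square function matching the convolution-by-$k_n^2$ pattern, so the $\ell^1$-boundedness from (6) directly closes the loop. For a general martingale type~$2$ space $X$ one instead invokes Minkowski's integral inequality (for which $p\ge 2$ is crucial) together with the Kahane--Khintchine inequalities to transfer the $L^p(L^2)$ Burkholder bound to the Rademacher sum in $L^p(\R_+\times\Omega;\gamma(H,X))$ required by (5).
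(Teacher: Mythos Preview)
Your treatment of the deterministic equivalences (1)$\Leftrightarrow$(2)$\Leftrightarrow$(3) via the It\^o isometry for deterministic integrands together with Proposition~\ref{prop:alphaplus}, and of (3)$\Leftrightarrow$(6) via Theorem~\ref{thm:workinglemma}, is correct and coincides with the paper's proof.

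The substantive divergence is in the step (2)$\Rightarrow$(5). The paper does \emph{not} invoke Proposition~\ref{prop:Burkholder} here at all; it uses only a Fubini argument. One passes to the $p$-th moment version of $R$-boundedness (Kahane--Khintchine), writes
\[
\E_r\Big\|\sum_{n=1}^N r_n S_{k_n}^H G_n\Big\|_{L^p(\R_+\times\Omega;X)}^p
=\E\,\E_r\Big\|\sum_{n=1}^N r_n S_{k_n}^H G_n\Big\|_{L^p(\R_+;X)}^p,
\]
applies the $R$-bound from (2) inside the $\E$, and Fubinis back. The martingale type~$2$ hypothesis enters only to ensure that each $S_k^H$ is a bounded operator on $L^p_\F(\R_+\times\Omega;\gamma(H,X))$, so that the statement of (5) makes sense.

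Your route through Burkholder's inequality, by contrast, runs into a genuine obstruction in the general martingale type~$2$ case. After applying Proposition~\ref{prop:Burkholder} conditionally on $(r_n)$ and integrating in $t$, you obtain a bound of the form
\[
\E_r\int_{\R_+}\E\Big(\int_{\R_+}\Big\|\sum_{n} r_n k_n(t-s)G_n(s)\Big\|_{\gamma(H,X)}^2\,ds\Big)^{p/2}dt,
\]
that is, an $L^p$-norm with target $L^2(\R_+;\gamma(H,X))$. To bound this by the Rademacher sum of the $G_n$ one would need $R$-boundedness of $\mathscr{N}_\mathscr{K}$ into $L^p(\R_+;L^2(\R_+;\gamma(H,X)))$. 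The hypothesis (3)$'$ only yields $R$-boundedness into $L^p(\R_+;\gamma(L^2(\R_+;H),X))$, and the type~$2$ embedding of Proposition~\ref{prop:L2gamma}(1) goes from $L^2$ to $\gamma$, not the other way around; controlling the $L^2$-norm by the $\gamma$-norm is the cotype~$2$ direction, which is not assumed. Minkowski's inequality (in either direction) and Kahane--Khintchine do not repair this: you are left trying to dominate an $L^2$-in-$s$ square function by a $\gamma$-square function without any usable hypothesis. The lattice detour through (6) and Lemma~\ref{lem:squarefunction} does close the loop, but that leaves the implication (2)$\Rightarrow$(5) unproved under the bare martingale type~$2$ assumption stated in the theorem. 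The paper's Fubini argument avoids this issue entirely by never introducing the $L^2$-norm in the first place.
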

\begin{proof}
The implications (2) $\Rightarrow$ (1), (4) $\Rightarrow$ (1), (5) $\Rightarrow$ (2),
(5) $\Rightarrow$ (4) are trivial, and for Banach lattices $X$ the equivalence
(3) $\Leftrightarrow$ (6) is the content of Theorem \ref{thm:workinglemma}.

(2) $\Rightarrow$ (5): Assuming that (2) holds,
for any choice of $S_{k_1},\dots, S_{k_N}\in \mathscr{S}_{\mathscr{K}}^H$ and
$G_1,\dots, G_N \in L^{p}_\mathscr{F}(\R_+\times\O;\g(H,X))$ we have, by Fubini's theorem
and (1),
\begin{align*}
 \E_r \Big\n \sum_{n=1}^N r_n S_{k_n} G_n \Big\n_{L^{p}(\R_+\times\O;X)}^2
& = \E \E_r \Big\n \sum_{n=1}^N r_n S_{k_n} G_n \Big\n_{L^{p}(\R_+;X)}^2
\\ & \le \rho^2 \E \E_r \Big\n \sum_{n=1}^N r_n G_n \Big\n_{L^{p}(\R_+;\gamma(H,X))}^2
\\ & = \rho^2 \E_r \Big\n \sum_{n=1}^N r_n G_n \Big\n_{L^{p}(\R_+\times\O;\gamma(H,X))}^2,
\end{align*}
with $\rho$ the $R$-boundedness constant as meant in (2).

(1) $\Leftrightarrow$ (3):
Fix $k_1, \ldots, k_N\in \mathscr{K}$ and let $G_1, \ldots, G_N$ be
elements of $L^p(\R_+;X)$.
 By Proposition \ref{prop:NVW}, for all $t\in \R_+$ we have
\begin{align*}
\E_r \Big\|\sum_{n=1}^N r_n (S_{k_n}^H G_n)(t)\Big\|_{L^p(\O;X)}^p &= \E_r \Big\|
\int_{\R_+} \sum_{n=1}^Nr_n k_n(t-s) G_n(s) \, d W(s) \Big\|_{L^p(\O;X)}^p
\\ & \eqsim_{p,X} \E_r \Big\|s\mapsto \sum_{n=1}^N r_n k_n(t-s)
G_n(s)\Big\|_{\g(L^2(\R_+),X)}^p
\\ & = \E_r \Big\|\sum_{n=1}^N r_n (N_{k_n} G_n)(t) \Big\|_{\g(L^2(\R_+),X)}^p.
\end{align*}
An integration over $t$ gives
\[\E_r \Big\|\sum_{n=1}^N r_n S_{k_n}^H G_n\Big\|_{L^p(\R_+\times\O;X)}^p
\eqsim_{p,X} \E_r \Big\|\sum_{n=1}^N r_n N_{k_n} G_n \Big\|_{L^p(\R_+;\g(L^2(\R_+),X))}^p. \]

(2) $\Leftrightarrow$ (3): The same argument as in the proof of (1) $\Leftrightarrow$ (3)
can be shown that (2) is equivalent with (3)$'$, where
\begin{enumerate}[\rm(1)]
\item[\rm(3)$'$] $\mathscr{N}_{\mathscr{K}}$
is $R$-bounded from $L^p(\R_+;\gamma(H,X))$ to $L^p(\R_+;\g(L^2(\R_+;H),X))$.
\end{enumerate}
The equivalence of (3) and (3)$'$ has been proved in Proposition \ref{prop:alphaplus}.
\end{proof}

By Theorem \ref{thm:Xlq}(1), the family $\mathscr{T}_{\mathscr{K}^2}$ is $\ell^{1}$-bounded if $p/2>1$ and the dual of
$X^2$ is an HL lattice (the monotonically completeness assumption is automatically
satisfied for dual Banach lattices by \cite[Proposition 2.4.19]{MN}); recall that
$\wt{\mathscr{K}} = \{k\in L^1(\R): \ |k * f| \le \wt M|f|$ for all simple $f\}$.
Thus we have proved our main result for the stochastic convolution operators $S_k$:

\begin{theorem}\label{thm:Rbddstoch}
Let $X$ be a Banach lattice with type $2$
and suppose that the dual of its $2$-concavification $X^2$
is an HL lattice.
For all Hilbert spaces $H$ and all $p\in (2, \infty)$, the family
of stochastic convolution operators
$$ \{S_k^H: k^2 \in \wt{\mathscr{K}}\} $$
is $R$-bounded from
$L^{p}_\mathscr{F}(\R_+\times\O;\g(H,X))$ to $L^{p}(\R_+\times\O;X)$.
\end{theorem}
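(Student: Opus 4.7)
The plan is to invoke Theorem \ref{thm:main1} in order to reduce the claim about stochastic convolutions to an $\ell^1$-boundedness statement for deterministic convolutions, and then read off that statement from Theorem \ref{thm:Xlq}(1) applied to the $2$-concavification $X^2$.

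First I would verify that Theorem \ref{thm:main1} applies. By assumption $X$ is a Banach lattice with type $2$, so Proposition \ref{prop:type-Mtype}(2) gives that $X$ has martingale type $2$. Consequently the full chain of equivalences (1)--(6) of Theorem \ref{thm:main1} is available for any $p\in [2,\infty)$ and any subset $\mathscr{K}\subseteq L^2(\R_+)$. In particular, taking $\mathscr{K} := \{k\in L^2(\R_+): k^2 \in \wt{\mathscr{K}}\}$, the assertion of the theorem (which is condition (5) of Theorem \ref{thm:main1}) is equivalent to condition (6), namely that the family
\[
\mathscr{T}_{\mathscr{K}^2} = \{T_{k^2}: k^2\in \wt{\mathscr{K}}\}
\]
is $\ell^1$-bounded on $L^{p/2}(\R_+;X^2)$. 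Note that the hypothesis $p\in (2,\infty)$ is exactly what is needed to have $p/2\in(1,\infty)$, so that the HL theory of the previous section applies on $L^{p/2}$.

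Second, I would verify this $\ell^1$-boundedness by applying Theorem \ref{thm:Xlq}(1) with the base lattice $X^2$ in place of $X$ and the exponent $p/2$ in place of $p$. Since $X$ has type $2$, it is $2$-convex, so $X^2$ is indeed a Banach lattice; by hypothesis $(X^2)^*$ is an HL lattice; and the remark preceding the theorem reminds us that the deterministic results of Section \ref{sec:det} carry over from $\R^d$ to $\R_+$ by restricting to kernels and functions supported on the half-line. Theorem \ref{thm:Xlq}(1) then yields that the \emph{entire} family $\{T_\ell : \ell\in \wt{\mathscr{K}}\}$ is $\ell^1$-bounded on $L^{p/2}(\R_+;X^2)$. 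In particular its subfamily $\{T_{k^2}: k^2\in \wt{\mathscr{K}}\}$ is $\ell^1$-bounded, which is precisely condition (6).

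Combining the two steps, the equivalence (5)$\Leftrightarrow$(6) in Theorem \ref{thm:main1} delivers the desired $R$-boundedness of $\{S_k^H : k^2\in\wt{\mathscr{K}}\}$ from $L^p_{\mathscr{F}}(\R_+\times\Omega;\gamma(H,X))$ to $L^p(\R_+\times\Omega;X)$. There is no real obstacle here: all the heavy lifting has been done in Theorems \ref{thm:main1} and \ref{thm:Xlq}, and the remaining work is simply bookkeeping (checking that the hypotheses line up, that $p/2>1$, that $X^2$ is a Banach lattice, and that the $\R_+$-vs-$\R^d$ issue is harmless). The one point that deserves explicit mention in the write-up is the passage from the $\R^d$-formulation of Theorem \ref{thm:Xlq} to the $\R_+$-formulation needed here, but this is the standard restriction argument indicated at the start of Section 6.
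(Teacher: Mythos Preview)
Your proposal is correct and follows exactly the paper's own argument: the paragraph preceding Theorem~\ref{thm:Rbddstoch} invokes Theorem~\ref{thm:Xlq}(1) with $X^2$ and exponent $p/2>1$ to obtain condition (6) of Theorem~\ref{thm:main1}, and the equivalence (5)$\Leftrightarrow$(6) then gives the result. The only detail the paper makes explicit that you leave implicit is that the monotone completeness needed in the proof of Theorem~\ref{thm:Xlq}(1) (via Proposition~\ref{prop:Xlinfty}(2)) is automatic for the dual lattice $(X^2)^*$.
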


Recall that a sufficient condition for $X^2$ to be an HL space is that $X^2$ is a UMD Banach function space
(see Theorem \ref{thm:Xlq}(2)).

Note that if $k\in W^{1,1}_{\rm loc}(\R_+)$
satisfies $\lim_{t\to\infty} k(t) = 0$ and
$ \int_0^\infty \sqrt{t}|k'(t)|\,dt <\infty,$ then
\begin{align*}
 \int_0^\infty t |(k^2)' (t)| \,dt
& = \int_0^\infty 2t |k'(t)k(t)| \,dt
 \\ & = \int_0^\infty 2t\Big|k'(t)\int_t^\infty k'(s)\,ds \Big|\,dt
 \\ & \le \int_0^\infty 2\sqrt{t}|k'(t)| \int_t^\infty \sqrt{s}  |k'(s)|\,ds  \,dt
 \\ & \le 2\Big(\int_0^\infty \sqrt{t}|k'(t)|\,dt \Big)^2
\end{align*}
and therefore $k^2\in \wt{\mathscr{K}}$ by Propositions \ref{prop:Rbddkernelcond} and \ref{prop:K1}.
This motivates the following definition:

Let $\mathscr{S}$ be the class of all $k\in W^{1,1}_{{\rm loc}}(\R_+)$ such that
\[\lim_{t\to \infty}k(t) = 0 \ \ \text{and} \ \ \int_{\R_+} \sqrt{t} |k'(t)| \, dt\leq 1.\]
The $R$-boundedness of stochastic convolution with kernels $k\in \mathscr{S}$ was considered in \cite[Section 3]{NVW12a} in the case $X = L^q$ with $q\in [2, \infty)$.

Note that if $k\in \mathscr{S}$, the above estimate combined with Propositions \ref{prop:Rbddkernelcond}
and \ref{prop:K1} shows that $k^2\in\wt{\mathscr{K}}$. In particular, $k^2\in L^1(\R_+)$
and therefore $k\in L^2(\R_+)$.

\begin{corollary}\label{cor:main}
 Let $X$ be a Banach lattice with type $2$ and suppose that the dual of its $2$-concavification $X^2$
is an HL lattice.
For all Hilbert spaces $H$ and all $p\in (2, \infty)$, the family
of stochastic convolution operators
$$\{S_{k}^H: \ k\in \mathscr{S}\}$$
is $R$-bounded from
$L^{p}_\mathscr{F}(\R_+\times\O;\g(H,X))$ to $L^{p}(\R_+\times\O;X)$.
\end{corollary}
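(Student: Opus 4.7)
The plan is to reduce this directly to Theorem \ref{thm:Rbddstoch} by showing that every $k\in\mathscr{S}$ gives rise to a squared kernel $k^2$ that lies in $\wt{\mathscr{K}}$ (up to a harmless universal rescaling). Since Theorem \ref{thm:Rbddstoch} already delivers the $R$-boundedness of $\{S_k^H : k^2\in\wt{\mathscr{K}}\}$ under the standing hypotheses on $X$, the corollary will follow essentially by bookkeeping.

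First, I would fix $k\in\mathscr{S}$ and run the Cauchy--Schwarz argument displayed immediately before the corollary. Writing $k(t)=-\int_t^\infty k'(s)\,ds$ (which is valid because $\lim_{t\to\infty}k(t)=0$), one obtains
\[
\int_0^\infty t\,|(k^2)'(t)|\,dt
\;\le\; 2\Big(\int_0^\infty \sqrt{t}\,|k'(t)|\,dt\Big)^{\!2}\;\le\;2.
\]
Extending $k^2$ by zero to all of $\R$ (it is supported on $\R_+$) and applying Proposition \ref{prop:K1} with $d=1$, this bound controls $\int_\R \esssup_{|\eta|\ge|\xi|}|k^2(\eta)|\,d\xi$ by a universal constant; Proposition \ref{prop:Rbddkernelcond} then yields a pointwise estimate $|k^2\ast f|\le C\,\wt M|f|$ with $C$ independent of $k$.

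Second, I would absorb $C$ by rescaling. Setting $\tilde k := k/\sqrt{C}$, the kernel $\tilde k^2 = k^2/C$ satisfies $|\tilde k^2 \ast f|\le \wt M|f|$ a.e. for all simple $f$, so $\tilde k^2\in\wt{\mathscr{K}}$. Since the stochastic convolution is linear in the kernel, $S_k^H = \sqrt{C}\,S_{\tilde k}^H$, and therefore
\[
\{S_k^H : k\in\mathscr{S}\}\;\subseteq\;\sqrt{C}\cdot\{S_{\tilde k}^H : \tilde k^2\in\wt{\mathscr{K}}\}.
\]
Theorem \ref{thm:Rbddstoch} applies to the right-hand family, yielding $R$-boundedness from $L^p_{\mathscr{F}}(\R_+\times\Omega;\gamma(H,X))$ to $L^p(\R_+\times\Omega;X)$ for every $p\in(2,\infty)$. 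Multiplying by the scalar $\sqrt{C}$ preserves $R$-boundedness, giving the result with $R$-bound at most $\sqrt{C}$ times the bound provided by Theorem \ref{thm:Rbddstoch}.

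There is no real obstacle here; the work has already been done. The only delicate points are (i) checking that the one-dimensional version of Propositions \ref{prop:Rbddkernelcond} and \ref{prop:K1} applies to $k^2$ extended by zero across the origin (it does, because $k\in W^{1,1}_{\rm loc}(\R_+)$ forces $k^2\in W^{1,1}_{\rm loc}(\R\setminus\{0\})$ after zero extension, and the integrand in Proposition \ref{prop:Rbddkernelcond} is insensitive to the behaviour at a single point), and (ii) tracking that the constant $C$ is universal so that the rescaling yields a uniform $R$-bound over the whole class $\mathscr{S}$.
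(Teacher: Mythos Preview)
Your proposal is correct and follows essentially the same route as the paper: the paper simply observes (in the paragraph immediately preceding the corollary) that for $k\in\mathscr{S}$ the estimate $\int_0^\infty t|(k^2)'(t)|\,dt\le 2$ together with Propositions~\ref{prop:Rbddkernelcond} and~\ref{prop:K1} gives $k^2\in\wt{\mathscr{K}}$, and then invokes Theorem~\ref{thm:Rbddstoch}. You are in fact more careful than the paper, which silently absorbs the universal constant arising from Proposition~\ref{prop:K1}; your explicit rescaling $\tilde k = k/\sqrt{C}$ is the correct way to handle this.
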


Examples of Banach lattices $X$ satisfying the conditions of the corollaries are the spaces $L^q(S)$
with $q\in [2,\infty)$ (we then have $X^2 = L^{q/2}(S)$).

\section{A counterexample\label{sec:failure}}

It has been an open problem for some time now whether
the family $$\{S_k^H: k^2\in\wt{\mathscr{K}}\} $$
considered in Theorem \ref{thm:Rbddstoch} is $R$-bounded from
$L^{p}_\mathscr{F}(\R_+\times\O;\g(H,X)$ to $L^{p}(\R_+\times\O;X)$ for all $2<p<\infty$ whenever
$X$ is a UMD Banach space with type $2$.  For UMD Banach lattices $X$ with type $2$, by Theorem \ref{thm:main1}
this question is equivalent to asking whether the family
$$\{T_{k^2}: k^2\in \wt{\mathscr{K}}\}
$$ is $\ell^{1}$-bounded
 on $L^{p/2}(\R_+;X^2)$ for any UMD Banach lattice
$X$ of type $2$. Here we will prove that this is not the case by showing that the space
$$ X = \ell^2(\ell^{4})$$
provides a counterexample;
for this space we have $X^2 = \ell^1(\ell^{2})$ and thus $(X^2)^* = \ell^\infty(\ell^{2})$.

Recalling that $\ell^\infty$ has the HL property, the following result comes somewhat as a surprise:
\begin{proposition}
The space $\ell^\infty(\ell^{2})$ fails the HL property.
\end{proposition}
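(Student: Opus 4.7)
The plan is to unpack the HL property for $X = \ell^\infty(\ell^2)$ into a concrete scalar inequality and then exhibit a family of test functions that violates it with a constant tending to infinity. Write an element of $\ell^\infty(\ell^2)$ as a doubly-indexed sequence $(x_{n,k})_{n,k \ge 1}$ with $\|x\|_{\ell^\infty(\ell^2)} = \sup_n (\sum_k |x_{n,k}|^2)^{1/2}$. Because the lattice operations act coordinatewise, for a simple $f:\R^d \to \ell^\infty(\ell^2)$ the averages $A_r|f|$ are obtained by applying $A_r$ to each scalar coordinate, and the lattice supremum over $r > 0$ becomes a pointwise scalar supremum. Hence $(\wt M f)^{(n)}_k(\xi) = M f^{(n)}_k(\xi)$, where $M$ is the usual scalar Hardy--Littlewood maximal function, and the HL property for $X$ reduces to the inequality
\[
\Big\|\sup_n \|M f^{(n)}\|_{\ell^2}\Big\|_{L^p(\R^d)} \;\le\; C \Big\|\sup_n \|f^{(n)}\|_{\ell^2}\Big\|_{L^p(\R^d)}
\]
for all finite families $(f^{(n)})$ of $\ell^2$-valued simple functions.

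The key obstruction is that the Fefferman--Stein $\ell^2$-valued maximal inequality is only an $L^p$ bound, not a pointwise bound: for a single $f^{(n)}:\R^d\to\ell^2$ one has
$\|M f^{(n)}(\xi)\|_{\ell^2}
  = \bigl\|(\sup_r A_r |f^{(n)}_k|(\xi))_k\bigr\|_{\ell^2}$
and the sup-over-$r$ inside the $\ell^2$-sum prevents a pointwise comparison with $M\|f^{(n)}\|_{\ell^2}(\xi)$; different radii $r$ can select different $k$-coordinates, so the $\ell^2$-norm of the envelope $\sup_r$ can be much larger than the $\sup_r$ of the $\ell^2$-norms. Taking $\sup_n$ on the outside then amplifies this defect.

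With this in mind I will construct, for each large $N$, an $\ell^2$-valued atom $g:\R\to\ell^2$ supported on a short interval $I_0$ (made from indicators of disjoint sub-intervals with carefully chosen weights and lengths on different scales) for which $\|Mg\|_{\ell^2}$ is non-trivial on a set $E$ whose measure is many times $|I_0|$. Placing $N$ well-separated translates $f^{(n)}(\xi) = g(\xi-a_n)$ so that both the supports $I_0+a_n$ and the spread sets $E+a_n$ are disjoint, one obtains
$\|\sup_n\|f^{(n)}\|_{\ell^2}\|_{L^p}^p \approx N\|g\|_{L^p(\R;\ell^2)}^p$ (proportional to $N\,|I_0|$) while $\|\sup_n \|M f^{(n)}\|_{\ell^2}\|_{L^p}^p$ picks up at least a constant multiple of the contribution coming from $E+a_n$ for each $n$, hence a quantity proportional to $N\,|E|$. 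Letting $N\to\infty$ and tuning the scale parameters of $g$ so that $|E|/|I_0|\to\infty$ contradicts any fixed HL constant.

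The main obstacle will be Step~3: explicitly constructing the atom $g$. Naive choices ($g_k=\one_{I_k}$ on disjoint unit intervals, or Hilbert-valued indicators supported on a single scale) all yield $\ell^2$-maximal profiles that are pointwise $O(1)$, because the sums $\sum_k(M\one_{I_k})^2$ telescope into convergent series. What is needed is a truly multi-scale construction, in the spirit of the Rubio de Francia square-function counterexample in $\ell^4$ (cf.\ the remark after Proposition~\ref{prop:UMD} and the allusion to \cite[Theorem~3]{PoSuXu}), in which the $\ell^2$-index $k$ runs through sub-intervals of different lengths so that, as $r$ varies, each $k$ contributes significantly for some $r$; then $\|Mg\|_{\ell^2}$ on $E$ will dominate $\|g\|_{\ell^2}$ on $I_0$ by a factor that can be made arbitrarily large by adding more scales. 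Once this atom is in hand, the rest of the argument is a direct book-keeping of $L^p$-integrals.
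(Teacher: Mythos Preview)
Your plan has a genuine gap: the way you use the $\ell^\infty$ index is vacuous, and what remains would contradict the Fefferman--Stein inequality.

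Concretely, you place $N$ \emph{disjoint} translates $f^{(n)}=g(\cdot-a_n)$ and observe that both sides of the putative HL inequality scale like $N$ times the corresponding quantity for a single atom $g$. So the ratio of the two $L^p$-norms is, in your own bookkeeping, $(N|E|)^{1/p}/(N|I_0|)^{1/p}=(|E|/|I_0|)^{1/p}$: the $N$ cancels and the outer $\ell^\infty$ structure contributes nothing. The entire argument therefore rests on finding an $\ell^2$-valued atom $g$ supported on $I_0$, with $\|g\|_{L^p(\R;\ell^2)}^p\approx|I_0|$, such that $\|Mg\|_{\ell^2}\ge c$ on a set $E$ with $|E|/|I_0|\to\infty$. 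But the Fefferman--Stein inequality (equivalently, the HL property of $\ell^2$, which \emph{does} hold since $\ell^2$ is UMD) gives $c^p|E|\le\|Mg\|_{L^p(\R;\ell^2)}^p\lesssim_p\|g\|_{L^p(\R;\ell^2)}^p\approx|I_0|$, so $|E|/|I_0|$ is bounded by a constant depending only on $p$ and $c$. No multi-scale choice of $g$ can defeat this. In effect your disjoint-translate scheme would, if it worked, prove that $\ell^2$ itself fails HL.

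The paper's construction exploits the $\ell^\infty$ index in an essential way by using \emph{overlapping} translates. One fixes a single $\ell^2$-valued profile $h(t)=(\one_{(2^{-j},2^{-j+1}]}(t))_{j\ge1}$ (so $\|h(t)\|_{\ell^2}\le1$ everywhere) and sets $(f(t))_k=\one_{(0,1]}(t)\,h(t-k2^{-N})$, with the shifts $k2^{-N}$ much \emph{finer} than the support. Then $\|f\|_{L^2(\R;\ell^\infty(\ell^2))}=1$, but for any $\tau\in(2^{-N},1)$ one picks the index $k$ with $\tau-k2^{-N}\in(0,2^{-N}]$; from that vantage point each of the $N$ scales $j=1,\dots,N$ satisfies $M(f_k)_j(\tau)\ge\tfrac14$, because the ball of radius $2^{-j+1}$ centred at $\tau$ captures the interval $(k2^{-N}+2^{-j},k2^{-N}+2^{-j+1}]$. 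Hence $\|\wt Mf(\tau)\|_{\ell^\infty(\ell^2)}^2\ge N/16$ on an interval of length $1-2^{-N}$, forcing the HL constant to be at least $\sim\sqrt{N}$. The point is that the $\sup_k$ lets you recentre the multi-scale profile near every $\tau$, so the number of contributing scales is $N$ rather than $O(1)$; this is exactly what disjoint translates cannot achieve.
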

\begin{proof} The proof is a refinement of the argument in \cite[Remark 2.9]{GMT1}.
Fix an integer $N\ge 1$. Let $f \in L^2(\R;\ell^\infty(\ell^2))$ be defined the coordinate functions
$$(f(t)_k)_j = \one_{(0,1]}(t)\one_{(2^{-j},2^{-j+1}]}(t-k2^{-N});$$
the indices $k$ and $j$ stand for the coordinates in $\ell^\infty$ and $\ell^2$,
respectively. Then $\n (f(t))_k\n_{\ell^2} = 1$ for all $t\in (0,1]$,
so $\n f\n_{L^2(\R;\ell^\infty(\ell^2))} = 1$. On the other hand
for $1\le j\le N$ and $\tau\in (k2^{-N},(k+1)2^{-N}]$ with $1\le k\le 2^N-1$ we have
\begin{align*} \wt M (f_k)_j(\tau)
& = \sup_{r>0} \frac1{2r} \Big|\int_{\tau-k2^{-N}-r}^{\tau-k2^{-N}+r} \one_{(2^{-j},2^{-j+1}]}(t)\,dt\Big|
\\ & \ge \frac1{2^{-j+2}} \Big|\int_{\tau-k2^{-N}-2^{-j+1}}^{\tau-k2^{-N}+2^{-j+1}} \one_{(2^{-j},2^{-j+1}]}(s)\,ds\Big|
\ge 2^{j-2} \cdot 2^{-j} = \frac14,
\end{align*}
so
$$ \n\wt M f(t)\n_{\ell^\infty(\ell^2)}^2 \ge  \sum_{j=1}^N (\wt M (f_k)_j(t))^2 \ge \frac{N}{16}, \quad t\in (2^{-N},1).$$
Hence $\n\wt M\n_{L^2(\R;\ell^\infty(\ell^2))} \ge \sqrt{N}/4 (1-2^{-N})^{1/2}$,
which tends to $\infty$ as $N\to \infty$.
\end{proof}

\begin{theorem}\label{thm:main3}
For any $1<p<\infty$, the family
$ \wt{\mathscr{T}} = \{T_{k}: k\in\wt{\mathscr{K}}\}$
fails to be $\ell^{1}$-bounded on $L^{p}(\R_+;\ell^1(\ell^{2}))$. As a consequence, for
any $2<p<\infty$ the family $$\{S_k^H: k^2\in \wt{\mathscr{K}}\} $$
fails to be $R$-bounded from
$L^{p}_\mathscr{F}(\R_+\times\O;\g(H,\ell^2(\ell^{4}))$ to $L^{p}(\R_+\times\O;\ell^2(\ell^{4}))$.
\end{theorem}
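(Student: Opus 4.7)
The plan has two parts: first, to establish the failure of $\ell^{1}$-boundedness of $\wt{\mathscr{T}}$ on $L^{p}(\R_+;\ell^1(\ell^2))$ by a duality argument that reduces the question to the boundedness of $\wt M$ on $L^{p'}(\R_+;\ell^\infty(\ell^2))$; second, to derive the $R$-boundedness failure for stochastic convolutions by invoking the equivalence (1)$\Leftrightarrow$(6) of Theorem \ref{thm:main1}.

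For the $\ell^{1}$-boundedness failure, I would argue by contradiction. Suppose $\wt{\mathscr{T}}$ is $\ell^{1}$-bounded on $L^{p}(\R_+;\ell^1(\ell^2))$. The duality $(\ell^1(\ell^2))\s = \ell^\infty(\ell^2)$ combined with Proposition \ref{prop:suff}(2) yields $\ell^{\infty}$-boundedness of the adjoint family on $L^{p'}(\R_+;\ell^\infty(\ell^2))$. The adjoint of $T_k$ is $T_{\bar k}$ with $\bar k(t)=k(-t)$, and the class $\wt{\mathscr{K}}$ is invariant under reflection (the defining inequality $|k*f|\le \wt M|f|$ passes to $\bar k$ by the change of variables $y\mapsto -y$, using that $\wt M$ commutes with reflection of $f$). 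Hence the adjoint family coincides with $\wt{\mathscr{T}}$ itself, and Proposition \ref{prop:Xlinfty}(1), applicable because $\ell^\infty(\ell^2)$ is monotonically complete as a dual lattice, forces $\wt M$ to be bounded on $L^{p'}(\R_+;\ell^\infty(\ell^2))$. This contradicts the proposition immediately preceding: although that proposition is stated on $\R$ at the exponent $2$, the witness functions in its proof are supported on $(0,1]$ and hence descend to $\R_+$, and the $p$- and $d$-independence of the HL property from \cite{GMT2} rules out boundedness of $\wt M$ at every $p'\in(1,\infty)$.

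For the consequence, the lattice $X=\ell^2(\ell^4)$ is UMD (being an iterated $\ell^q$-sum with $q\in\{2,4\}\subseteq(1,\infty)$) and has type $2$ (as a $2$-convex, $4$-concave Banach lattice), and its $2$-concavification is $X^2=\ell^1(\ell^2)$. Theorem \ref{thm:main1}, specifically the equivalence (1)$\Leftrightarrow$(6), reduces the $R$-boundedness of $\{S_k^H:k^2\in\wt{\mathscr{K}}\}$ from $L^{p}_\mathscr{F}(\R_+\times\O;\g(H,X))$ to $L^{p}(\R_+\times\O;X)$ to the $\ell^{1}$-boundedness of $\{T_{k^2}:k^2\in\wt{\mathscr{K}}\}$ on $L^{p/2}(\R_+;\ell^1(\ell^2))$. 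This subfamily consists of convolutions with \emph{non-negative} kernels in $\wt{\mathscr{K}}$, and the contradiction argument from the first part applies to it verbatim: the kernels $k_r=\frac{1}{2r}\one_{B_0(r)}$ that drive the proof of Proposition \ref{prop:Xlinfty}(1) are non-negative and hence squares (of $\sqrt{k_r}$), so they belong to this subfamily, and the induced $\ell^{\infty}$-bound still recovers $\wt M_J$ uniformly in $J$ and produces the same contradiction against the HL-failure of $\ell^\infty(\ell^2)$.

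The principal obstacle is the symmetry bookkeeping: one must verify that reflection preserves both $\wt{\mathscr{K}}$ and non-negativity, so that the adjoint of the subfamily of ``squared'' kernels lands back inside the subfamily itself, and that the HL counterexample of the preceding proposition, given on $\R$ at $p=2$, really transfers to $L^{p'}(\R_+;\ell^\infty(\ell^2))$ for every $p'\in(1,\infty)$. Both points are handled cleanly, the first by a direct change of variables and the second by the $p,d$-independence of the HL property from \cite{GMT2} together with the compact support of the witness. Everything else is routine application of results already established in the earlier sections.
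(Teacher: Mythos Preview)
Your argument is correct and follows essentially the same route as the paper: duality via Proposition \ref{prop:suff}(2) reduces the question to showing that $\wt{\mathscr{T}}$ fails to be $\ell^\infty$-bounded on $L^{p'}(\R_+;\ell^\infty(\ell^2))$, which follows from Proposition \ref{prop:Xlinfty}(1) together with the HL failure of $\ell^\infty(\ell^2)$. You are in fact more careful than the paper in noting that the averaging kernels $k_r$ driving Proposition \ref{prop:Xlinfty}(1) are non-negative (hence squares), so that the contradiction applies already to the subfamily $\{T_{k^2}:k^2\in\wt{\mathscr{K}}\}$ needed for the stochastic consequence via Theorem \ref{thm:main1}; two minor points are that Proposition \ref{prop:Xlinfty}(1) does not require monotonic completeness, and the relevant equivalence in Theorem \ref{thm:main1} is (5)$\Leftrightarrow$(6) rather than (1)$\Leftrightarrow$(6).
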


\begin{proof}
By a duality argument, it suffices to show that $\wt{\mathscr{T}}$ fails to be
$\ell^{\infty}$-bounded on $L^{p}(\R_+;\ell^\infty(\ell^2))$.
As $\ell^\infty(\ell^2)$ fails HL, the latter follows from Proposition \ref{prop:Xlinfty}.
\end{proof}

\noindent
{\em Acknowledgment} --
We thank Tuomas Hyt\"onen for his kind permission to present
his short proof of Proposition \ref{prop:Rbddkernelcond} here.
We thank the anonymous referee for carefully reading and providing
helpful comments.

\def\polhk#1{\setbox0=\hbox{#1}{\ooalign{\hidewidth
  \lower1.5ex\hbox{`}\hidewidth\crcr\unhbox0}}} \def\cprime{$'$}

\end{document}